\documentclass[11pt, english]{amsart}
\usepackage{amssymb,amsthm,amstext, color}

\usepackage{mathrsfs}
\usepackage{bm}
\usepackage[utf8]{inputenc}
\usepackage{xcolor}

\usepackage{comment} 
\usepackage{amsmath}

\usepackage{youngtab}
\usepackage{colortbl}

\usepackage{geometry}
\usepackage{mathrsfs,upgreek}
\usepackage{appendix}

\input xy
\xyoption{all}

\usepackage{dynkin-diagrams}

\usepackage[only,mapsfrom,heavycircles,lightning]{stmaryrd}
\usepackage{calc}
\usepackage{paralist}

\makeatletter
\ifnum\@ptsize=0  \fi \ifnum\@ptsize=2  \fi 

\makeatletter
\newcommand{\arkfamily}{\fontencoding{U}\fontfamily{ark}\selectfont}
\newcommand{\ark@sym}[1]{{\arkfamily\symbol{#1}}}
\newcommand{\leftthumbsup}{\ark@sym{'125}}
\newcommand{\smallpencil}{\ark@sym{'120}}

\usepackage[]{todonotes}

\newcounter{genus}
\newcounter{gagid}

\@addtoreset{gagid}{section}

\makeatother

\theoremstyle{plain}
\newtheorem{theorem}{Theorem}
\newtheorem*{theorem*}{Theorem}
\newtheorem*{conjecture*}{Conjecture}
\newtheorem{prop}[theorem]{Proposition}
\newtheorem{lemma}[theorem]{Lemma}

\def\CC{{\mathbb{C}}}
\def\GG{{\mathbb{G}}}

\def\OO{{\mathbb{O}}}

\def\PP{{\mathbb{P}}}
\def\QQ{{\mathbb{Q}}}

\def\cO{{\mathcal{O}}}
\def\cE{{\mathcal{E}}}

\def\cI{{\mathcal{I}}}
\def\cL{{\mathcal{L}}}

\def\cS{{\mathcal{S}}}

\def\cU{{\mathcal{U}}}

\def\lra{{\longrightarrow}}
\def\fs{{\mathfrak s}}

\def\fg{{\mathfrak g}}
\def\fso{\mathfrak{so}}
\def\fsp{\mathfrak{sp}}
\def\fsl{\mathfrak{sl}}\def\fgl{\mathfrak{gl}}

\usepackage[hypertexnames=false]{hyperref}

\begin{document}

\title{Spinorial Fano manifolds}
\author{Alessandro Frassineti, Laurent Manivel}

\address{Università di Genova, Via Dodecaneso 35, 16146 Genova, Italy}
\email{alessandro.frassineti@edu.unige.it}

\address{
Institut de Math\'ematiques de Marseille, 
Aix-Marseille University, CNRS, I2M, UMR 7373, Marseille, France}
\email{laurent.manivel@math.cnrs.fr}

\date{}

\begin{abstract}
We construct prime Fano manifolds from spin representations of $Spin_n$ for $n\le 14$. In this range, and if $n\ne 13$, the projectivizations of these representations are prehomogeneous, and we deduce that our Fano manifolds are locally rigid and, up to a few exceptions,  quasi-homogeneous under the action of their automorphism groups. For $n=13$ we obtain a non-trivial family of minimal compactifications of $SL_3\times SL_3$, modulo some finite group.  
\end{abstract}

\maketitle

\section{Introduction and Results}

\subsection*{Prime Fano manifolds} The classification of prime Fano threefolds (whose Picard group has rank one) contains $17$ families \cite{IP}, three of which consist in a single, locally rigid threefold: $\PP^3$, $\QQ^3$ and the quintic del Pezzo threefold. All have big automorphism group, big enough for the action to be homogeneous or at least quasi-homogeneous (there is an open orbit). As Mukai has shown (see e.g. \cite{mukai}), most of the other prime Fano threefolds have natural embeddings into some rational homogeneous spaces of Picard number one, which are also locally rigid prime Fano manifolds (and even globally rigid, up to the exception of the orthogonal Grassmannian $OG(2,7)$).

One can therefore expect that, if Fano manifolds are ever classified in higher dimension, the prime and rigid ones among them will be of special importance. We thus need to enlarge our lists of varieties that can potentially play the same role as homogeneous spaces, typically the quasi-homogeneous ones. In dimension four, restricting to the prime ones, we are only aware of  $\PP^4$, $\QQ^4\simeq G(2,4)$ and the quintic del Pezzo fourfold. In higher dimension, such "completions of homogeneous spaces" have been classified under certain hypothesis \cite{akhiezer, pasquier0, pasquier, ruzzi, ruzzi1}, but only very few of them are prime Fano. 

The main goal of this paper is to construct prime, locally  rigid, quasi-homogeneous (or almost  quasi-homogeneous) Fano manifolds from spin representations.

 \subsection*{Compactifications of homogeneous spaces}
We are interested in Fano varieties obtained as zero loci of sections of homogeneous vector bundles on rational homogeneous spaces $G/P$, where $G$ is a simple complex Lie group and $P$ some parabolic subgroup. Each irreducible $G$-module $V_\lambda$ is, as follows from the Borel-Weil theorem, the space of global sections of an irreducible homogeneous bundle $\cE_\lambda$ on $G/P$. This bundle is automatically globally generated; if its rank is smaller than the dimension of $G/P$, the zero locus $Z(s)$ of a general section $s$ is therefore smooth of the expected dimension, and in most cases, nonempty. Up to isomorphism, $X=Z(s)$ only depends on the $G$-orbit of $[s]$ in $\PP(V_\lambda)$, and there are two typical situations: either the generic orbit has positive codimension, and one gets nontrivial families of varieties; or there is an open orbit, and generically one always gets the same variety $X$.

In the second situation, the $G$-action on $\PP (V_\lambda)$ is prehomogeneous. For $s$ a general section, the stabilizer $H$ of $[s]$ in $G$ acts on $X$ (and is usually equal to its automorphism group). This provides a nice way to construct projective varieties with big automorphism groups. We will discuss a series of cases for which the stabilizer $H$ is so big that it acts on $X$ with an open orbit $H/K$, for which $X$ provides a nice smooth equivariant compactification. 

\subsection*{Local rigidity}\label{locRig}
In the situation where $G$ has an open orbit in $\PP(V_\lambda)$, the variety
$X=Z(s)$ does not depend on the general section $s$ and we can expect it to be locally rigid. This is usually checked by using the normal exact sequence 
$$0\lra TX\lra T(G/P)_{|X}\lra \cE_{\lambda|X}\lra 0.$$
We can suppose that  $H^0(G/P,T(G/P))=\fg$, the Lie algebra of $G$, and $H^q(G/P,T(G/P))=0$ for $q>0$ (see e.g. \cite[section 4.8, Theorem 1 \& Theorem 2]{akhiezer-book}). Suppose we have 
\begin{equation}\label{c1}
 H^0(X,T(G/P)_{|X})\simeq H^0(G/P,T(G/P))=\fg,
\end{equation}
\vspace*{-6mm}
\begin{equation}\label{c11}
 H^1(X,T(G/P)_{|X})\simeq H^1(G/P,T(G/P))=0,
\end{equation}
\vspace*{-6mm}
\begin{equation}\label{c2}
 H^0(X,\cE_{\lambda|X})\simeq H^0(G/P,\cE_{\lambda})/\CC s=V_\lambda/\CC s,
\end{equation}
these isomorphisms being obtained by restriction from $G/P$ to $X$. Then the long exact sequence in cohomology induced by the normal exact sequence reads as follows:
\begin{equation}\label{long}
    0\lra H^0(X,TX)\lra \fg\lra V_\lambda/\CC s\lra H^1(X,TX)\lra 0.
\end{equation}
The middle morphism is nothing else than the differential of the action of $G$ at $[s]$, from $\fg$ to $T_{[s]}\PP(V_\lambda)$. When $[s]$ has an open orbit, this morphism is surjective. Then $H^1(X,TX)=0$, which means that $X$ is locally rigid, as expected. Moreover, $H^0(X,TX)$ is the Lie algebra of the stabilizer $H$ of $[s]$, which means that $H$ is a subgroup of finite index of $Aut(X)$. 

Conditions (\ref{c1}), (\ref{c11}) and (\ref{c2}) are usually verified thanks to the Koszul complex that resolves $\cO_X$, by deducing the required vanishing of cohomology from Bott's theorem. There are some exceptions, so these conditions must be checked carefully. 

\medskip\noindent {\it Remark}. 
In some of the cases we consider, the homogeneous vector bundle $\cE$ is not irreducible but rather
a direct sum of two irreducible vector bundles, and 
condition (\ref{c2}) has to be modified accordingly.
When we have twice the same bundle $\cE_\lambda$, 
with two sections $s_1,s_2$, this condition becomes 
\begin{equation}\label{c2'}
H^0(X,\cE_{\lambda|X})\simeq V_{\lambda}/\langle s_1,s_2\rangle. \tag{$3\,'$}
\end{equation}
Moreover, the long exact sequence (\ref{long}) becomes 
\begin{equation}\label{4'}
    0\lra H^0(X,TX)\lra \fg\lra \langle s_1,s_2\rangle^\vee\otimes  V_{\lambda}/\langle s_1,s_2\rangle\lra H^1(X,TX)\lra 0.\tag{$4\,'$}
\end{equation}
Here, the middle morphism is the differential of the action of $G$ on the Grassmannian of planes in $V_\lambda$, from $\fg$ to $T_{\langle s_1,s_2\rangle}G(2,V_\lambda)$. So we expect $X$ to be locally rigid when $G$ acts on 
$G(2,V_\lambda)$ with an open orbit.

\subsection*{Spin representations and spin bundles}
An important source of quasi-homogeneous representations is provided by spin representations in low dimension. Recall that $Spin_{2n+1}$ has one spin representation $\Delta$ of dimension $2^n$, while $Spin_{2n}$ has two half-spin representations  $\Delta_+$, $\Delta_-$ of dimension $2^{n-1}$. The dimension of $Spin_m$ is bigger than the dimension of its spin representations for $m\le 14$, and the fact is that the action of such a (projectivised) representation is quasi-homogeneous, except for $m=13$. We need a brief reminder on spin representations and spinor bundles. Classical references on spinors are \cite{chevalley,fh}. 

\medskip
Suppose $m=2n$ is even (the discussion would be similar in the odd case). One can define the half-spin representations by fixing a decomposition $V=\CC^{2n}=E\oplus F$ as a sum of two maximal isotropic spaces. The quadratic form on $V$ then provides an identification of $F$ with the dual of $E$; we always use in the sequel a basis $e_1,\ldots ,e_n$ of $E$ with dual basis  $f_1,\ldots ,f_n$ of $F$. The half-spin representations are obtained by letting 
$$\Delta_+=\wedge^{even}E, \qquad \Delta_-=\wedge^{odd}E.$$
Note that there are natural maps $V\otimes \Delta_\pm\lra \Delta_\mp$, where $E$ acts by wedge products and $F\simeq E^\vee$ by contractions. This induces maps  $\wedge^2V\otimes \Delta_\pm\lra \Delta_\pm$ which recover the Lie algebra actions, through the identification of $\wedge^2V$ with $\fso(V)$. 
More generally, for any $k\ge 0$ there are natural maps 
$$\wedge^{2k}V\otimes \Delta_\pm\lra \Delta_\pm, \qquad
\wedge^{2k+1}V\otimes \Delta_\pm\lra \Delta_\mp.$$

For any parabolic subgroup $P$ of $Spin_{2n}$, we can consider $\Delta_\pm$ as the space of global sections of a pair of irreducible bundles $\cE_\pm$ on $Spin_{2n}/P$, that we simply denote by $\cS_\pm$ and call {\it spinor bundles}. We will restrict in the sequel to those $Spin_{2n}/P$ which are orthogonal Grassmannians $OG(k,2n)$, parametrizing isotropic subspaces $U$ of $V$ of dimension $k$ (see \cite{ottaviani} for the special case of spinor bundles on quadrics, e.g. $k=1$). 
For $k<n-1$, the  fiber of $\cS_\pm$ at the point $[U]$ can be described as the cokernel of the morphism 
$$U\otimes \Delta_\pm \hookrightarrow V\otimes\Delta_\pm\lra \Delta_\mp.$$
 We can think of this fiber as a spin representation for the quadratic space $U^\perp / U$. In particular the rank of $\cS_\pm$ is $2^{n-k-1}$, and taking the sum of its weights we easily see that its first Chern class is 
$$c_1(\cS_\pm)=\frac{1}{2}\big(\mathrm{rank}\; \cS_\pm\big) \cL,$$
if $\cL=\cE_{\omega_k}$ denotes the ample generator of the Picard group. 
Moreover, the condition for a section $s$ to vanish at $[U]$
 can be expressed in terms of the Pl\"ucker representative $\omega_U\subset \wedge^kV$ of $U$, as follows. 

\begin{lemma}\label{kills}
Consider a section $s$ of a spinor bundle on $OG(k,m)$, $k<\lfloor \frac{m}{2}\rfloor$,
defined by a spinor $\delta$. Then $s$ vanishes at
$[U]$ when $\delta$ is killed by $\omega_U$.
\end{lemma}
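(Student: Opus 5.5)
The plan is to work at a single point $[U]$ and unwind the description of the fiber of $\cS_\pm$ as a cokernel. The section $s$ defined by $\delta\in\Delta_\pm$ takes at $[U]$ the value given by the image of $\delta$ in the fiber. So the first step is to identify this fiber correctly. By homogeneity we may assume $U=\langle e_1,\ldots,e_k\rangle\subset E$. Then $U^\perp=\langle e_1,\ldots,e_n,f_{k+1},\ldots,f_n\rangle$ and $U^\perp/U\simeq E'\oplus F'$ with $E'=\langle e_{k+1},\ldots,e_n\rangle$. The fiber of the spinor bundle is then the spin representation of $U^\perp/U$, namely $\wedge^{\bullet}E'$ with the appropriate parity.

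The key point is to make the quotient map $\Delta\to\wedge^\bullet E'$ explicit. Since $\omega_U=e_1\wedge\cdots\wedge e_k$ acts on $\Delta=\wedge^\bullet E$ by wedge product with $e_1\wedge\dots\wedge e_k$, I would decompose
\[
\wedge^\bullet E=\bigoplus_{I\subset\{1,\ldots,k\}} e_I\wedge \wedge^\bullet E'.
\]
Multiplication by $\omega_U$ kills every summand except $I=\emptyset$, and on that summand it is injective. So the kernel of $\omega_U$ is exactly the span of the summands with $I\neq\emptyset$, and $\Delta/\ker(\omega_U)\simeq \wedge^\bullet E'$. The same structure should appear in the general statement that $\ker(\omega_U\cdot)=U\cdot\Delta$, the image of Clifford multiplication by $U$. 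For $U\subset E$ this is clear: $U\cdot\Delta$ is spanned by the $e_i\wedge\Delta$ with $i\le k$, which is precisely the sum of the summands with $I\ne\emptyset$. The general case would follow from equivariance under the stabilizer of $U$, once one checks that the Plücker action of $\omega_U$ in $\wedge^kV\otimes\Delta\to\Delta$ is Clifford multiplication by $u_1\cdots u_k$, which is well defined up to scalar because $U$ is isotropic.

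With this in hand, the fiber at $[U]$, being the cokernel of $U\otimes\Delta_\pm\to\Delta_\mp$, is $\Delta_\mp/U\cdot\Delta_\pm=\Delta_\mp/\ker(\omega_U)$ (with the parity conventions of the paper). The section $s$ at $[U]$ is the class of $\delta$ in this quotient. It vanishes exactly when $\delta\in U\cdot\Delta=\ker\omega_U$, i.e.\ when $\omega_U\delta=0$.

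The step I expect to be the main obstacle is the bookkeeping of conventions. One has to make sure that the identification of $\Delta$ with global sections matches the cokernel description, so that evaluation really is the quotient map, and one has to get the parity right for $\Delta_\pm$ versus $\Delta_\mp$. The identification $\ker(\omega_U)=U\cdot\Delta$ itself is the mathematical heart of the argument, but once we reduce to $U\subset E$ it is immediate.
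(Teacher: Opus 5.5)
Your proposal is correct and is essentially the paper's argument. Both reduce by $Spin_m$-equivariance to $U=\langle e_1,\dots,e_k\rangle\subset E$, and both observe that the image $U\cdot\Delta$ defining the fiber as a cokernel is spanned by the $e_J$ with $J\cap\{1,\dots,k\}\neq\emptyset$, which is exactly the kernel of wedging with $e_1\wedge\cdots\wedge e_k$. Your remark that $\omega_U$ acts as the Clifford product $u_1\cdots u_k$, since the $u_i$ are mutually orthogonal and isotropic, is what justifies this reduction; the paper leaves that step implicit.
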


\proof We need to prove that, being given a basis $u_1,\ldots , u_k$ of $U$, our spinor $\delta$ can be decomposed as $u_1.\delta_1+\cdots +u_k.\delta_k$ for some spinors   $\delta_1, \ldots, \delta_k$, if and only if $\omega_U.\delta=0$. Up to the action of $Spin_m$, we can suppose that $U\subset E$ and the basis is $e_1,\ldots , e_k$, for a basis  $e_1,\ldots , e_n$ of $E$. Then the hypothesis is that $\delta\in \wedge^*E$ is a combination of some basis vectors $e_J$ with $J\cap \{1,\ldots ,k\}\ne\emptyset$, and this is clearly equivalent to the condition that the wedge product with $e_1\wedge\cdots\wedge e_k\in \omega_U$ vanishes. \qed 

\medskip
As usual the cases $k=n-1$ and $k=n$ are special. The orthogonal Grassmannian $OG(n,2n)$ has two components $OG(n,2n)_+$ 
and $OG(n,2n)_-$, that can be embedded respectively into $\PP(\Delta_+)$ and $\PP(\Delta_-)$. One of the spinor bundles is the hyperplane bundle for this embedding, while the other one is a twist of the tautological rank $n$ vector bundle, with determinant equal to $n-2$ times the hyperplane bundle. 

On the other hand, the orthogonal Grassmannian $OG(n-1,2n)$ embeds into $OG(n,2n)_+\times OG(n,2n)_-$ (this is the usual property that a $(n-1)$-dimensional isotropic subspace is the intersection of two maximal isotropic subspaces, one from each family); in particular its Picard number is two, and the Picard group is generated by the two spinor bundles, which are the pull-backs of the hyperplane bundles from $OG(n,2n)_+$ and $OG(n,2n)_-$.

\medskip We will denote by $(OG(k,m),\cE)$ the variety defined as the zero locus of a general section of a homogeneous bundle $\cE$, in case $\PP H^0(\cE)$ is prehomogeneous. Most of the time $\cE$ will be a spinor bundle (sometimes a direct sum of spinor bundles) and $H^0(\cE)$ a spin or half-spin representation. The variety  $(OG(k,m),\cE)$ will inherit an action of the generic stabilizer of  $Spin_m$ acting on $\PP H^0(\cE)$, which we need to recall.

\subsection*{Generic isotropy groups} Prehomogeneous irreducible representations of connected linear algebraic groups have been classified in \cite{sk}. This includes the actions of $GL_1\times   Spin_m$ on a spin representation, which is prehomogeneous exactly when the action of $Spin_m$ on the projectivized spin representation is prehomogeneous - this is the condition we are interested in. Generic stabilizers have been computed explicitly; their Lie algebras can be found in  \cite{sk}, and the actual groups in \cite[Table 1]{gg}. The result of their computations is the following:

$$\begin{array}{ccc}
m &\qquad & \mathrm{stabilizer} \\
7 && G_2 \\
8& &Spin_7 \\
9& &Spin_7 \\
10 && (\CC^*\times Spin_7)\rtimes\Delta \\
11 && SL_5 \\
12&& SL_6 \\ 
14 && G_2\times G_2
\end{array}$$
\begin{center} \small Table 1. Generic stabilizers \normalsize\end{center}

\medskip
The case where $m=13$ is also interesting, although in this dimension the projectivized spin representation is not prehomogeneous. In fact, the generic stabilizer is $SL_3\times SL_3$, and the generic orbit has codimension one \cite{gg}. 

\medskip
Note that the stabilizer is always reductive, which implies that the complement of the open orbit is a hypersurface - except for the well-known case where $m=10$, for which the complement of the open orbit is just the closed orbit $OG(5,10)_\pm$.

\subsection*{Results I: Unexpected identifications} In some cases, the varieties $(OG(k,m),\cE)$ are homogeneous, leading to some unexpected identifications. We get the following ones:
$$(OG(2,7),\cS)\simeq G_2/P_2,$$
$$(OG(2,9),\cS)\simeq (OG(2,8),\cS_\pm)\simeq OG(2,7),$$
$$(OG(2,10),\cS_+\oplus\cS_+)\simeq G_2/P_2, \qquad 
(OG(2,10),\cS_+\oplus\cS_-)\simeq Fl_4,$$
$$(OG(2,11),\cS)\simeq Fl_5, \qquad (OG(2,12),\cS_\pm)\simeq Fl_6,$$
$$(OG(2,13),\cS)\simeq Fl_3\cup Fl_3, \qquad (OG(2,14),\cS_\pm)\simeq G_2/P_2\cup G_2/P_2.$$

\medskip
Note that these identifications all involve {\it adjoint varieties}, the adjoint variety of a simple Lie group $G$ being the unique close orbit inside $\PP(\fg)$, its projectivized Lie algebra. Indeed the adjoint variety of $SL_n$ is $Fl_n$ (the incidence quadric in $\PP^{n-1}\times\check{\PP}^{n-1}$), that of $Spin_m$ is $OG(2,m)$, and the adjoint variety of $G_2$ is $G_2/P_2$, $P_2$ being the parabolic subgroup associated to the long simple root.  

The identification $(OG(2,9),\cS)\simeq OG(2,7)$ is especially interesting. There is an open orbit of sections $s$ of $\cS$ whose zero-locus $Z(s)$ is isomorphic to $OG(2,7)$, but there is also a hypersurface of sections for which $Z(s)$ is still smooth, but not isomorphic to the orthogonal Grassmannian. We thus recover the suprising property of $OG(2,7)$, first observed in \cite{pasper} and further analyzed in \cite{hwang-li}, to degenerate to a smooth variety (the $G_2$-horospherical variety). This phenomenon is unique among rational homogeneous spaces of Picard rank one. 

\smallskip
For the spinor varieties $OG(n,2n)_\pm$ we get the two identifications
$$(OG(5,10)_-,\cS_+)=\QQ^5, \qquad (OG(6,12)_-,\cS_+)=G(3,6).$$

We also include a curious identification  $$(OG(n,2n+1),\wedge^2\cU^\vee)\simeq (\PP^1)^n$$
which is the analogue of a classical one for Lagrangian Grassmannians (parametrizing maximal isotropic spaces with respect to some symplectic form), namely (see \cite[Theorem 3.1]{kuznetsov})
$$(LG(n,2n),\wedge^2\cU^\vee)\simeq (\PP^1)^n.$$
The latter statement can easily be deduced from the fact that two general symplectic forms can be simultaneously "diagonalized" by planes. 

\subsection*{Results II: Spinorial Fano manifolds}
In the remaining cases where $(OG(k,m),\cE)$ is not homogeneous, 
we obtain interesting Fano manifolds, that we expect to be locally rigid, and to have sufficiently big automorphism group to be quasi-homogeneous, or close. Those are our {\it spinorial Fano manifolds}, whose list is given in Table 2;  for each such variety $X$, we provide its dimension $d_X$, its Picard number $\rho_X$ and its index $\iota_X$.

\medskip
Some of these spinorial Fano manifolds have already appeared in the literature. 
The hyperplane section $ (OG(5,10)_+,\cS_+)$ appears in \cite{pasquier} as a horospherical variety (case 2 of Theorem 0.1); its local rigidity was proved in \cite{pasper}. The codimension two linear section $ (OG(5,10)_+,\cS_+\oplus \cS_+)$ is discussed in 
\cite[Proposition 4.8]{bfm}, where it is shown to admit a quasi-homogeneous action of $G_2\times SL_2$. 
The hyperplane section $ (OG(6,12)_+,\cS_+)$ appears in \cite{bucz} as a quasi-homogeneous Legendrian variety. 
The hyperplane section $ (OG(7,14)_+,\cS_+)$ is discussed in \cite[Proposition 5.3]{bfm}, where it is proved to be quasi-homogeneous under an action of $G_2\times G_2$.
The local rigidity for all these cases follows from \cite[Theorem 1.1]{bfm}.

Finally, the last case of the list, $(OG(7,14)_-, \cS_+)$, is the {\it double Cayley Grassmannian} already studied in 
\cite{doublecayley}, where is is shown to be an equivariant compactification of $G_2$.
\medskip

\begin{center}
\begin{tabular}{ccccl}
$X$ & $d_X$ &  $\rho_X$ &  $\iota_X$ &    \\
$(OG(3,8), \cS_+)$ & $8$& $2$ & $1$ & \\
 $(OG(3,8), \cS_+\oplus \cS_-)$    & $7$&$2$&$3$ &\\
 $(OG(3,9), \cS)$ & $10$ & $1$ & $4$  &\\
  $(OG(2,10), \cS_+)$ & $9$ & $1$ & $5$ &\\
    $(OG(3,10), \cS_+)$   & $13$&$1$&$5$& \\
       $(OG(3,10), \cS_+\oplus \cS_+)$   & $11$&$1$&$4$& \\
   $(OG(3,10), \cS_+\oplus \cS_-)$   & $11$&$1$&$4$& \\
   $(OG(5,10)_+,\cS_+)$ & $9$&$1$ & $7$ &\cite{pasquier}\\
    $(OG(5,10)_+,\cS_+\oplus\cS_+)$ & $8$&$1$ & $6$&\cite{bfm}\\
   $(OG(3,11), \cS)$ & $14$ & $1$ & $5$ & \\ 
   $(OG(3,12), \cS_+)$ & $17$ & $1$ & $6$ &  \\
    $(OG(4,12), \cS_+)$ & $20$ & $1$ & $6$ &  \\
    $(OG(6,12)_+,\cS_+)$ & $14$ & $1$ & $9$ &\cite{bucz}\\
     $(OG(3,14), \cS_+)$ & $19$ & $1$ & $6$  & \\
    $(OG(4,14), \cS_+)$ & $26$ & $1$ & $7$   &\\  
    $(OG(5,14), \cS_+)$ & $28$ & $1$ & $7$   &\\  
    $(OG(7,14)_+, \cS_+)$ & $20$ & $1$ & $11$&\cite{bfm}\\
    $(OG(7,14)_-, \cS_+)$  & $14$ & $1$ & $7$ &\cite{doublecayley}
\end{tabular}
\end{center}
\begin{center} \small Table 2. Spinorial Fano manifolds  \normalsize\end{center}

\medskip\noindent {\bf Theorem A.} {\it Each of these spinorial Fano manifolds is locally rigid.}

\medskip\noindent {\bf Theorem B.} {\it Each of these spinorial Fano manifolds is quasi-homogeneous, except possibly for $(OG(k,14), \cS_+)$ for $k=4,5$, whose cohomogeneity is at most one.}

\bigskip
The verification of the quasi-homogeneity property must be done case by case. We will work at the Lie algebra level and use extensively the explicit computations made in \cite{sk}, where the stabilizer of a generic (explicit) spinor (in each relevant dimension) is described. This spinor defines a copy of our Fano manifold, in which we will find a generic point and compute its stabilizer. Finding a generic point can sometimes be challenging; computing its stabilizer, hence checking that its orbit has the correct dimension, is then just linear algebra. In some cases we find that the generic stabilizer is reductive: for example, up to some finite group that we did not identify, we show that $(OG(3,12),\cS_+)$ is a smooth compactifications of  $G_2\times G_2/SL_2\times SL_2\times SL_2$. 

\medskip
On the other hand, the verification of the rigidity property always follows the same strategy: we check conditions (\ref{c1}), (\ref{c11}) and  (\ref{c2}) through the Koszul complex defined by our vector bundle section; this reduces us to a bunch of cohomological conditions that can be verified by applying Bott's theorem. We will explain only a few cases in detail: Proposition \ref{rigidity} exhibits the standard situation, where the expected cohomological vanishing hold true; Propositions \ref{rigidityOG311} and \ref{rigidityOG312} discuss the two cases for which some unexpected non-vanishing has to be dealt with. The other cases have been treated with the help of a computer program written in \textit{Macaulay2}. 

\medskip
Similarly, we compute the Betti numbers of our manifolds by following a uniform pattern, as follows. For a given variety $X = (Y,\mathcal{E})$, we start from the conormal sequence 
\[
0\longrightarrow \mathcal{E}_{|X}^\vee \longrightarrow \Omega_{Y|X} \longrightarrow \Omega_X\longrightarrow 0
\]
and its $p$-th exterior power
\begin{align*}
0&\longrightarrow \mathrm{Sym}^p\mathcal{E}_{|X}^\vee \longrightarrow \mathrm{Sym}^{p-1}\mathcal{E}_{|X}^\vee\otimes\Omega_{Y|X} \longrightarrow \cdots  \hspace*{3cm} \\ 
\hspace*{3cm}
\cdots &\longrightarrow  \mathcal{E}_{|X}^\vee \otimes \Omega_{Y|X}^{p-1}  \longrightarrow \Omega_{Y|X}^{p} \longrightarrow        \Omega_X^{p} \longrightarrow 0
\end{align*}
In order to compute the Hodge numbers $h^{p,q}(X)$, we use the Koszul complex defined by our vector bundle section, and Bott's theorem to compute the cohomology of $\mathrm{Sym}^j\mathcal{E}_{|X}^\vee \otimes \Omega_{Y|X}^{p-j}$. In most cases, enough terms are vanishing for the computation to be successful, and the cohomology turns out to be pure 
(in the sense that $h^{p,q}(X)=0$ for $p\ne q$). We thus get the Betti numbers as $b_{2p+1}(X)=0$ and $b_{2p}(X)=h^{p,p}(X)$; for each of our spinorial Fano manifolds we provide the explicit list of the even Betti numbers we are able to compute (in some cases the codimension is to big for the computation to be effective).

\bigskip 
In the case of $Spin_{13}$, whose action on the projectivized spin representation has cohomogeneity one, we can deduce from the same constructions as before, several families of Fano manifolds, that we briefly discuss  in the final section of this paper. The most striking case is that of $(OG(3,13),\cS)$, which yields a nontrivial family of prime quasi-homogeneous Fano manifolds of dimension $16$: again modulo some finite group, they are all minimal compactifications of $SL_3\times SL_3$.

\medskip\noindent 
 {\bf Acknowledgements.} We thank Sasha Kuznetsov for useful discussions, that led to Proposition \ref{horo}. A. Frassineti was partially supported by INdAM - GNSAGA Project, Varietà di Fano e hyperkahler: costruzioni, classificazioni e collegamenti (CUP E53C24001950001). L. Manivel
was supported by the project FanoHK ANR-20-CE40-0023.

\section{Proofs} 

\begin{prop}\label{G2P2}
$(OG(2,7), \cS)=G_2/P_2$
\end{prop}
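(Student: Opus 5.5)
Fix a generic spinor $\delta\in\Delta$, $\dim\Delta=8$, for $Spin_7$. By Table 1 its stabilizer is $G_2$. Equivalently, $\delta$ gives a non-isotropic vector under the invariant quadratic form on $\Delta$, which identifies $Spin_7$ with a subgroup of $SO_8$ acting on $\Delta$. Let $X=Z(s)\subset OG(2,7)$ be the zero locus of the corresponding section of $\cS$. Then $\cS$ has rank $2^{3-2-1}=2$, so $\dim X=\dim OG(2,7)-2=7-2=5$, and $X$ is smooth for general $\delta$ because $\cS$ is globally generated. Since $G_2$ fixes $[\delta]$, it acts on $X$. The plan is to show that $X$ is a single closed $G_2$-orbit of dimension $5$. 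The only candidate is $G_2/P_2$, the adjoint variety of $G_2$, which is $5$-dimensional and parametrizes the isotropic $2$-planes $U\subset V_7$ that are \emph{null} for the octonionic product: $U\cdot U=0$ in $\mathrm{Im}\,\OO$, or equivalently planes on which the invariant $3$-form $\phi$ restricts in a degenerate way, $\phi(u_1,u_2,\cdot)=0$.

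\textbf{Step 1.} Characterise the vanishing locus. By Lemma \ref{kills}, $s$ vanishes at $[U]$ if and only if $(u_1\wedge u_2).\delta=0$, where $u_1\wedge u_2\in\wedge^2V\simeq\fso_7$ acts on $\Delta$. So $X$ is the set of isotropic planes whose Plücker vector lies in $\fso_7$ and annihilates $\delta$, that is, which lies in the stabilizer algebra $\fg_2\subset\fso_7$.

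\textbf{Step 2.} Identify this set. Under the decomposition $\fso_7=\wedge^2V_7=\fg_2\oplus V_7$, the set $X$ is the intersection of $OG(2,7)$, which is the adjoint variety of $\fso_7$ and the orbit of highest root vectors, with $\PP(\fg_2)$. I will show this intersection is exactly the adjoint variety of $G_2$, the orbit of long highest root vectors. Containment $\supseteq$: the highest root vector of $\fg_2$ is also a highest root vector of $\fso_7$, since the long roots of $G_2$ are long roots of $B_3$. By $G_2$-equivariance, the closed orbit $G_2/P_2$ therefore lies in $X$.

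\textbf{Step 3.} Prove equality. Since $X$ is smooth of dimension $5$ and $G_2/P_2$ is a closed $5$-dimensional subvariety, $G_2/P_2$ is a union of components of $X$. It remains to rule out other components, and I expect this to be the main obstacle. I would proceed in two ways.

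First, every $G_2$-orbit in $\PP(\fg_2)$ has in its closure the unique closed orbit. So any other component would be $G_2$-stable, hence either meet $G_2/P_2$, which contradicts smoothness since components of a smooth variety are disjoint, or consist of other orbits disjoint from it, which is impossible for a closed $G_2$-stable subset.

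Second, as a check, compute the degree: $c_2(\cS)\cdot\cL^5$ on $OG(2,7)$ should match $\deg G_2/P_2=18$. Alternatively, verify directly that a nilpotent element of $\fg_2$ with rank-$2$ image in $V_7$ and isotropic image is of the long-root type, using the classification of nilpotent orbits in $\fg_2$ (dimensions $6,8,10,12$). The short-root orbit acts with rank greater than $2$ on $V_7$. Ranks are what the computation in $\fg_2\subset\fso_7$ must verify, and I would do this on explicit root vectors.
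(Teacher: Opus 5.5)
Your proof is correct, and it takes a different route from the paper's.

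\textbf{The paper's route.} The paper argues abstractly. By adjunction, $X$ is a Fano fivefold of index $3$. It carries an action of the generic stabilizer $G_2$. A projective variety with a nontrivial $G_2$-action has dimension at least $5$, with $G_2/P_1\simeq\QQ^5$ and $G_2/P_2$ as the examples. The index, $5$ for the quadric against $3$ for $X$, then rules out the quadric.

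\textbf{Your route.} You identify $X$ concretely.
\begin{enumerate}
\item By Lemma \ref{kills}, $X$ is $OG(2,7)\cap\PP(\fg_2)$ inside $\PP(\fso_7)$. This uses that, for orthogonal $u_1,u_2$, Clifford multiplication by $u_1u_2$ agrees up to a scalar with the Lie algebra action of $u_1\wedge u_2$, whose annihilator of $\delta$ is $\fg_2$.
\item The closed orbit of $\PP(\fg_2)$ lies in $X$, because long root vectors of $\fg_2$ are long root vectors of $\fso_7$.
\item Each component of $X$ is $G_2$-stable, since $G_2$ is connected. By the Borel fixed point theorem, each component therefore meets the unique closed orbit of $\PP(\fg_2)$. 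Since irreducible components of a smooth variety are disjoint, $X=G_2/P_2$.
\end{enumerate}
Your first argument in Step 3 already completes the proof. The degree check and the nilpotent-orbit analysis are not needed.

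\textbf{What each approach buys.} Your route is self-contained and exhibits $X$ explicitly as the adjoint variety in $\PP(\fg_2)\subset\PP(\fso_7)$. It also gives irreducibility of $X$ for free. The paper's short argument takes for granted that $X$ is connected and that $G_2$ acts with orbits of maximal dimension. Your mechanism is the one the paper itself uses later, for $(OG(2,9),\cS)\simeq OG(2,7)$ and $(OG(2,12),\cS_+)\simeq Fl_6$. The paper's route is shorter and needs no root computation. It relies instead on the classification of minimal $G_2$-varieties and on the index.

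\textbf{A small slip.} On $OG(k,2n+1)$ the spinor bundle has rank $2^{n-k}$, not $2^{n-k-1}$; your formula gives $2^0=1$. The value $2$ you actually use is correct.
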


\proof By adjunction, $(OG(2,7), \cS)$ is a Fano fivefold $X$ of index $3$. A section of $\cS$ is a vector in the spin representation $\Delta$ of $Spin_7$, whose generic stabilizer is of type $G_2$; this implies that $X$ admits an action of $G_2$. But the 
minimal dimension of a projective variety with such an action is five, since this is the dimension of the two $G_2$-Grassmannians $G_2/P_1\simeq\QQ^5$ and $G_2/P_2$. 
Since the quadric has index $5$ while $X$ has index three, $X$ has to coincide with 
$G_2/P_2$. \qed

\begin{prop}
$(OG(2,8), \cS_+)=OG(2,7)$
\end{prop}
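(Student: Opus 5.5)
The plan is to use triality, which should reduce the statement to a tautology. Let $Y=OG(2,8)=Spin_8/P_2$, where $P_2$ is the maximal parabolic attached to the central node of the $D_4$ Dynkin diagram. First a numerical check. We have $\dim Y=2\cdot 6-3=9$, the index of $Y$ is $5$, and $\cS_+$ has rank $2^{4-2-1}=2$ with $c_1(\cS_+)=\cL$. So $X=(OG(2,8),\cS_+)$ is a Fano sevenfold of index $4$ by adjunction, exactly like $OG(2,7)$, which has dimension $2\cdot5-3=7$ and index $4$. This is consistent, but it is not a proof. A characterisation argument in the style of Proposition \ref{G2P2}, using the $Spin_7$-action on $X$ coming from the generic stabilizer in Table 1, looks less clean here. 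Indeed $Spin_7$ has several $7$-dimensional projective homogeneous spaces and quasi-homogeneous candidates, so I prefer a direct identification.

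The key step is to describe the rank-$2$ homogeneous bundles on $Y$ via the Levi factor of $P_2$. Removing the central node leaves the three outer nodes, so the semisimple part of the Levi factor is $SL_2\times SL_2\times SL_2$, one factor for each outer node $\alpha_1,\alpha_3,\alpha_4$. I would check on weights that the three irreducible rank-$2$ bundles on $Y$ with $c_1=\cL$ are the following: $\cU^\vee$, the dual tautological bundle, with fiber the standard representation of the $SL_2$ at $\alpha_1$ and $H^0=V$; $\cS_+$, corresponding to $\alpha_3$ with $H^0=\Delta_+$; and $\cS_-$, corresponding to $\alpha_4$ with $H^0=\Delta_-$. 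The fiber of $\cS_\pm$ at $[U]$ is a half-spin representation of the $4$-dimensional quadratic space $U^\perp/U$, that is, the standard representation of one of the two $SL_2$ factors of $Spin(U^\perp/U)=SL_2\times SL_2$; this makes the identification transparent. The triality automorphism $\tau$ of $Spin_8$ exchanging $\alpha_1$ and $\alpha_3$ fixes $\alpha_2$, hence preserves $P_2$ up to conjugacy. It therefore induces an automorphism of the variety $Y$ that intertwines the $Spin_8$-action twisted by $\tau$, and it pulls $\cS_+$ back to $\cU^\vee$ while exchanging $\Delta_+$ and $V$ on global sections.

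Consequently $(OG(2,8),\cS_+)\simeq(OG(2,8),\cU^\vee)$, with a general spinor corresponding to a general vector $v\in V$. A general vector is non-isotropic; this matches the fact that the open $Spin_8$-orbit in $\PP(\Delta_+)$, with stabilizer $Spin_7$, is sent to the open orbit of non-isotropic vectors in $\PP(V)$. The section of $\cU^\vee$ defined by $v$ vanishes at $[U]$ if and only if $v$ is orthogonal to $U$, that is, $U\subset v^\perp$. Since $v^\perp\simeq\CC^7$ carries a nondegenerate restricted form, the zero locus is exactly $OG(2,v^\perp)=OG(2,7)$, and it is smooth of the expected dimension $7$.

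The main obstacle is the careful justification that triality really carries $\cS_+$ to $\cU^\vee$ compatibly with global sections. This amounts to checking on highest weights that the bundle $\cE_\lambda$ with $H^0=\Delta_+=V_{\omega_3}$ is the one with $\lambda=\omega_3$ restricted to the Levi factor, and that $\tau$ exchanges $\omega_1$ and $\omega_3$. Both facts follow from Borel--Weil, but the conventions need care.
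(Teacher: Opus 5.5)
Your proposal is correct and follows the paper's proof. Triality fixes the central node of $D_4$, which the paper phrases as $OG(2,8)$ being the closed orbit in $\PP(\fso_8)$; it therefore carries $\cS_+$ to $\cU^\vee$ and identifies $(OG(2,8),\cS_+)$ with $(OG(2,8),\cU^\vee)=OG(2,v^\perp)\simeq OG(2,7)$ for non-isotropic $v$, and your Levi-factor, pull-back and explicit zero-locus details simply flesh out what the paper states in a few lines.
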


\proof The outer automorphisms of $Spin_8$ induced by triality (see e.g \cite[§20.3]{fh}) permute its three irreducible eight-dimensional representations, namely the natural representation $V_8$ and the two half-spin representations $\Delta_+$ and $\Delta_-$. 
Since $OG(2,8)$ is the closed orbit inside $\PP(\fso_8)$, the projectivization of the Lie algebra, it is preserved by these outer automorphisms, which exchange correspondingly its three irreducible rank two vector bundles, namely the (dual) tautological vector bundle
$\cU^\vee$, and the two spinor bundles $\cS_+$ and $\cS_-$ (whose spaces of global sections are precisely $V_8, \Delta_+$ and $\Delta_-$):
$$\dynkin[labels={\cU^\vee, , \cS_+, \cS_-}, edge length = 1cm] D{o*oo}$$

\medskip
We can therefore use triality, which exchanges the outer vertices of the Dynkin diagram, in order to identify  $(OG(2,8), \cS_+)$ with $(OG(2,8),\cU^\vee)$. And the latter is obviously $OG(2,7)$. \qed 

\medskip Now we turn to $OG(3,8)$, which has Picard rank two. As we already mentioned, this can be interpreted as follows: each isotropic $3$-plane $P$ is contained in two maximal isotropic spaces $Q_+$ and $Q_-$ from the two families $OG(4,8)_+$ and $OG(4,8)_-$. The latter are both  $6$-dimensional quadrics (again by triality), and we denote them by $\QQ_+^6$ and $\QQ_-^6$. Then $OG(3,8)$ embed inside $\QQ_+^6\times\QQ_-^6$, the fibers of both projections being projective planes.  Moreover, the two spinor bundles $\cS_+$ and $\cS_-$ are nothing else than the line bundles obtained by pulling-back the minimal ample line bundles from the two quadrics.

\medskip\noindent {\it Remark.}
Since $OG(3,8)$ has codimension $3$ and is subcanonical in $\QQ_+^6\times\QQ_-^6$, it has to be a Pfaffian degeneracy locus. Indeed, if $\cU_+$ and $\cU_-$ denote the rank four tautological bundles on $OG(4,8)_+$ and $OG(4,8)_-$, we can describe $OG(3,8)$ as a degeneracy locus for the morphism $\cU_+\lra V_8/\cU_-\simeq \cU_-^\vee$, or equivalently $\cU_-\lra V_8/\cU_+\simeq \cU_-^\vee$, or equivalently  $\cU_+\oplus \cU_-\lra (\cU_+\oplus \cU_-)^\vee$. The latter, suitably normalized, becomes skew-symmetric and is the only one to have a degeneracy locus of the expected codimension. 

\begin{prop}
$(OG(3,8), \cS_+)$ is a Fano eightfold with a quasi-homogeneous action of $Spin_7$. 
\end{prop}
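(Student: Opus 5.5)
The plan is to make $X=(OG(3,8),\cS_+)$ explicit through the projection $\pi_+\colon OG(3,8)\to \QQ^6_+=OG(4,8)_+$, $P\mapsto Q_+$. Since $\cS_+=\pi_+^*\cO_{\QQ^6_+}(1)$, the zero locus of a section $s\in H^0(\cS_+)=\Delta_+$ is $X=\pi_+^{-1}(H_s\cap \QQ^6_+)$, where $H_s$ is the hyperplane defined by $s$. The fibre of $\pi_+$ over $[Q_+]$ is the set of $3$-planes in $Q_+$, which is $\PP(Q_+^\vee)\simeq\PP^3$. This is consistent with $\dim OG(3,8)=9$ and $\dim\QQ^6=6$, and it means the fibres are $\PP^3$'s rather than planes. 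For $s$ generic, i.e.\ non-isotropic for the quadratic form on $\Delta_+$, the section $H_s\cap\QQ^6_+$ is a smooth quadric $\QQ^5$. So $X$ is a $\PP^3$-bundle over $\QQ^5$, smooth of dimension $8$, with Picard number $2$ generated by the restrictions of $\cS_+$ and $\cS_-$.

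For the Fano property I will use adjunction, $K_X=(K_{OG(3,8)}\otimes\cS_+)_{|X}$. I will compute $c_1(OG(3,8))$ in terms of the two generators $h_\pm=c_1(\cS_\pm)$ from the weights of the tangent bundle; symmetry under the outer automorphism exchanging $\Delta_\pm$ should give $c_1=2h_++2h_-$. Then $-K_X=(h_++2h_-)_{|X}$. This is ample because $h_+$ and $h_-$ are the pull-backs of ample classes under the two projections, and $OG(3,8)$ embeds into the product $\QQ^6_+\times\QQ^6_-$. A gcd argument then gives index $1$, matching Table 2.

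For the action, the stabilizer of a generic $s\in\Delta_+$ is $Spin_7$ by Table 1, and it preserves $X$. Via triality, $Spin_7$ acts on $s^\perp\subset\Delta_+$ through its vector representation, hence transitively on $\QQ^5=H_s\cap\QQ^6_+$, with stabilizer the parabolic $P_1$ whose Levi factor is $GL_1\times Spin_5$. It then suffices to show that $P_1$ has an open orbit on the fibre $\PP(Q_+^\vee)$; the orbit of $Spin_7$ through a general point of $X$ then has dimension $5+3=8$.

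The key step, and the expected main obstacle, is identifying how $Spin_7$ acts on $V_8$ and how $P_1$ acts on $Q_+$. Restricted to $Spin_7$, $V_8$ is the spin representation. The isotropic $4$-space $Q_+$ attached to the isotropic line $[\ell]\in\QQ^5$ should then be $P_1$-stable and, as a Levi module, the spin representation of $Spin_5\simeq Sp_4$, i.e.\ $\CC^4$ twisted by a character. The group $Sp_4$ is transitive on $\PP^3$, which gives the open (in fact full) orbit on each fibre, so $Spin_7$ acts on $X$ with an open orbit. I would first try to verify this in coordinates: take $s=1+e_1e_2e_3e_4\in\wedge^{even}E$, the generic spinor used in \cite{sk}; write down the explicit subalgebra $\fso_7\subset\fso_8$ stabilizing it; choose a specific point of $X$; and check by linear algebra that its stabilizer in $\fso_7$ has dimension $21-8=13$. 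The fallback is a direct weight argument showing that $Q_+\simeq\ell\otimes(\text{spin}_5)$ as $P_1$-modules.
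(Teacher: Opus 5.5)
Your proof is correct, and it takes a genuinely different route from the paper.

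The paper works in coordinates. With $\delta=1+e_{1234}$ it exhibits an explicit isotropic $3$-plane $P$ with $\omega_P.\delta=0$. It then uses the matrices of \cite{sk} to find a $13$-dimensional stabilizer of $P$ in $\fso_7$, so the orbit of $P$ has dimension $21-13=8$ and is dense. This is the uniform recipe used throughout Table 2, but it depends on finding a good point and on sign-sensitive linear algebra, and it gives no global picture of $X$.

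You instead use $\cS_+=\pi_+^*\cO(1)$ to see $X=\pi_+^{-1}(\QQ^5)$ as a $\PP^3$-bundle over a smooth hyperplane section $\QQ^5\subset\QQ^6_+$. You are right that the fibres of $\pi_\pm$ are $\PP^3$'s; the planes are the fibres of $X\to\QQ^6_-$. You then combine transitivity of $Spin_7$ on $\QQ^5$ with the action of the isotropy parabolic on $Q_+$. That action factors through the Levi factor, with $Spin_5\simeq Sp_4$ acting on $Q_+\simeq\CC^4$ by the natural representation. This gives $\rho_X=2$ and the Betti numbers $(1,2,3,4,4,4,3,2,1)$ at no extra cost.

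Your argument in fact proves more than the statement. Since $Sp_4$ is transitive on hyperplanes of $\CC^4$, $Spin_7$ is transitive on $X$, so $X\simeq Spin_7/P_{1,3}$, the variety of isotropic flags (line $\subset$ $3$-space) in $\CC^7$. You should draw this conclusion explicitly. It sits uneasily with the paper's claim that $\gamma_P\simeq\fgl(P)\rtimes V$: a $13$-dimensional parabolic subalgebra of $\fso_7$ has a $5$-dimensional Levi factor, so it cannot surject onto $\fgl_3$. The proposition itself is unaffected.

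There is one computational slip. $c_1(OG(3,8))=4(h_++h_-)$, not $2(h_++h_-)$. Indeed $h_++h_-=c_1(\cU^\vee)$ and $c_1(OG(k,2n))=(2n-k-1)\,c_1(\cU^\vee)$; equivalently, $-K$ must restrict to $\cO(4)$ on the $\PP^3$-fibres. Hence $-K_X=(3h_++4h_-)_{|X}$. This is still ample, still of index one since $\gcd(3,4)=1$, and it matches $c_1(Spin_7/P_{1,3})=3\omega_1+4\omega_3$.
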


\noindent {\it Betti numbers:} $(1,2,3,4,4,4,3,2,1)$.

\proof The stabilizer in $\fso_8$ of the generic spinor $\delta=1+e_{1234}$ in $\Delta_+$ is computed in \cite{sk} (see the proof of Proposition 35). Consider in $\CC^8$ the three-dimensional isotropic space $P$ generated by $e_1+f_2, e_2-f_1, e_3+f_4$. A straightforward computation shows that $\omega_P.\delta=0$, which means by Lemma \ref{kills} that $[P]$ belongs to the copy of  $(OG(3,8), \cS_+)$ defined by $\delta$. Using the formulas given in \cite{sk}, one can compute the dimension of the stabilizer $\Gamma_P$ of $P$ in $Spin_7$, or rather its Lie algebra $\gamma_P$. One checks explicitly that there is an exact sequence 
$$0\lra V\lra\gamma_P\lra \fgl(P)\lra 0,$$
where the induced action of $\fgl(P)$ on $V$ yields an identification $V\simeq P\oplus \CC$. In particular $\gamma_P\simeq\fgl(P)\rtimes V$ has dimension $9+4=13$. Therefore, the $Spin_7$-orbit of $P$ has dimension $21-13=8$ and must be dense in $(OG(3,8), \cS_+)$. 

\medskip In order to prove local rigidity, we verify conditions (\ref{c1}), (\ref{c11}) and (\ref{c2}). The third one follows immediately from the Koszul complex  
$$0\lra \cS_+^\vee\lra \cO\lra \cO_X\lra 0,$$
twisted by $\cS_+$. In order to check the other ones, we rather twist it by $T$, the tangent bundle to $OG(3,8)$, which is not an irreducible homogeneous bundle but an extension 
$$0\lra Hom(\cU,\cU^\perp/\cU)\lra T\lra \wedge^2\cU^\vee\lra 0,$$
where $\cU^\perp/\cU\simeq Hom(\cS_+,\cS_-)\oplus Hom(\cS_-,\cS_+)$. 
In terms of homogeneous bundles, denoting by $\cE_\lambda$ the irreducible homogeneous bundle of highest weight $\lambda$, we have the following correspondence:
$$\dynkin[labels={\cU^\vee\simeq \cE_{\omega_1}, , \cS_+\simeq \cE_{\omega_4}, \cS_-\simeq \cE_{\omega_3}}, edge length = 1cm] D{oo**}$$

\medskip
According to Bott's theorem, $\wedge^2\cU^\vee\simeq \cE_{\omega_2}$ has no higher cohomology, while its space of global sections is $V_{\omega_2}\simeq \fso_8$. Moreover its twist by the line bundle $\cS_+^\vee\simeq \cE_{-\omega_4}$ is acyclic. On the other hand, by Bott's theorem again  $Hom(\cU,\cU^\perp/\cU)\simeq \cE_{\omega_1-\omega_3+\omega_4}\oplus \cE_{\omega_1-\omega_3+\omega_4}$ is acyclic, as well as its twist by $\cS_+^\vee$. This implies (\ref{c1}) and (\ref{c11}), hence the local rigidity of $X$.  
\qed

\medskip\noindent {\it Remark.} Since $\Gamma_P$ is not reductive, the open orbit $Spin_7/\Gamma_P$ is not affine. It would be interesting to describe its complement, and more generally the orbit-structure of the action of the automorphism group, which presumably coincides with $\Gamma_P$.  The same question can be raised for each of our spinorial Fano manifolds.

\medskip\noindent {\it Remark.}
The incidence variety of pairs $(L\subset P)$ inside $OG(2,8)\times (OG(3,8), \cS_+)$ 
is $\PP^2$-bundle over $(OG(3,8), \cS_+)$. The projection to $OG(2,8)$ has fibers isomorphic to $\PP^1\times\PP^1$ over $(OG(2,8),\cS_+)=OG(2,7)$, and fibers $\PP^1$
outside this locus. This gives another way to compute the Poincar\'e polynomial of  $(OG(3,8), \cS_+)$. 

\begin{prop}
$(OG(3,8), \cS_+\oplus \cS_-)$ is a Fano sevenfold of index $3$ and Picard number $2$, with a quasi-homogeneous action of  $G_2$. 
\end{prop}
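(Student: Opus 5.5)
The numerical invariants come first. $OG(3,8)$ has dimension $9$ (it is a $\PP^2$-bundle over a $6$-dimensional quadric), and $\cS_+\oplus\cS_-$ consists of two line bundles, so $X$ has dimension $7$. Write $h_\pm$ for the pull-backs of the hyperplane classes of $\QQ^6_\pm$, so that $\cS_\pm=\cO(h_\pm)$. Then $K_{OG(3,8)}=-4(h_++h_-)$: by homogeneity it is symmetric under the triality element exchanging $\pm$, and its degree on a fibre line equals the index $6$ of $\QQ^6$ minus the $2$ coming from the $\PP^2$ fibre. Adjunction gives $-K_X=3(h_++h_-)$, which is ample, so the index is $3$. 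For the Picard number I use the Koszul complex of $\cO_X$, which involves $\cO(-h_\pm)$ and $\cO(-h_+-h_-)$, together with Bott's theorem. These show $H^q(\cO_X)=0$ for $q>0$, and they feed into the conormal sequence to give $h^{1,1}(X)=h^{1,1}(OG(3,8))=2$. I expect both line bundles twisted by $\Omega$ to be acyclic, and the Lefschetz-type restriction then goes through.

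For $G_2$: a general pair $(\delta_+,\delta_-)\in\Delta_+\oplus\Delta_-$ is stabilised in $Spin_8$ by $G_2$. Indeed, $Spin_7=\mathrm{Stab}(\delta_+)$ acts on $\Delta_-$ as its $8$-dimensional spin representation, whose generic stabiliser is $G_2$ by Table 1. By triality this is the same as the stabiliser of a non-isotropic vector together with a generic spinor, giving $\dim Spin_8-\dim\PP(\Delta_+)-\dim\PP(\Delta_-)=28-14=14$ and an open orbit in $\PP\Delta_+\times\PP\Delta_-$. Since $\dim G_2=14>7$, quasi-homogeneity is plausible. I take $\delta_+=1+e_{1234}$ and a $\delta_-$ chosen from the formulas of \cite{sk} so that the stabiliser algebra $\fg_2\subset\fso_7\subset\fso_8$ is explicit. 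I then look for an isotropic $3$-plane $P$ with $\omega_P.\delta_+=\omega_P.\delta_-=0$ (Lemma \ref{kills}), adjusting the candidate $\langle e_1+f_2,e_2-f_1,e_3+f_4\rangle$ from the previous proposition. Finally I compute $\gamma_P=\mathrm{Stab}_{\fg_2}(P)$ and check that it has dimension $7$, for instance $\fgl_2$ extended by a $3$-dimensional unipotent part, or a parabolic-type subalgebra. The orbit is then $7$-dimensional, hence dense.

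Local rigidity follows the pattern of the previous proposition. I check conditions (\ref{c1}) and (\ref{c11}) and the analogue of (\ref{c2}) for each summand, where $H^0(\cS_\pm|_X)=\Delta_\pm/\CC\delta_\pm$. To do this I twist the Koszul complex by $T$, using its filtration by $\wedge^2\cU^\vee$ and $Hom(\cU,\cU^\perp/\cU)$, and by the $\cS_\pm$, and apply Bott. The new terms compared with before are the twists by $\cS_-^\vee$ and $\cS_+^\vee\otimes\cS_-^\vee$. The map $\fso_8\to T_{[\delta_+]}\PP\Delta_+\oplus T_{[\delta_-]}\PP\Delta_-$ is surjective because the orbit is open, so $H^1(TX)=0$.

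The main obstacle is finding an explicit point $P$ in the zero locus of both spinors that is generic for $G_2$. The twist by $\cO(-h_+-h_-)$ in the rigidity check might also produce an unexpected Bott nonvanishing, which would then have to be handled separately.
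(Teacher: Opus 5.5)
Your strategy for the $G_2$ part matches the paper's: exhibit an explicit point of $X$ and count its stabilizer in $\fg_2$. But you never carry it out, and that computation is the whole content of the quasi-homogeneity claim. The inequality $\dim G_2=14>7$ is only a necessary condition, so as written the proposal proves only that $G_2$ acts on $X$. Concretely, you must fix $\delta_-$, produce $P$, and show $\dim\mathrm{Stab}_{\fg_2}(P)=7$.

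The paper takes $\delta_-=(e_4-f_4).\delta_+=e_4+e_{123}$. With this choice the common stabilizer of $\delta_\pm$ is the explicit $\fg_2$ of \cite{sk}, namely the stabilizer of $\delta_+$ and of the vector $e_4-f_4$. It then takes $P=\langle e_1,f_2,e_4+f_3\rangle$. A two-line check shows that $\omega_P$ kills both $1+e_{1234}$ and $e_4+e_{123}$, and the matrix form of $\fg_2$ gives a stabilizer of codimension $7$. The candidate you propose to adjust fails here: for $P_0=\langle e_1+f_2,e_2-f_1,e_3+f_4\rangle$ one gets $\omega_{P_0}.(e_4+e_{123})=\pm1\pm e_{12}\pm e_{34}\pm e_{1234}\neq0$. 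Your guessed structure is also wrong. The isotropy is $\GG_m^2\ltimes U_5$, which is solvable (a codimension-one subgroup of a Borel), not an extension of $\fgl_2$; and no $7$-dimensional subgroup of $G_2$ is parabolic.

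The rest is essentially fine, with one slip. The fibres of $OG(3,8)\to\QQ^6_\pm$ are $\PP^3=G(3,Q_\pm)$, not $\PP^2$ (note $6+2\neq 9$), so the ``$6-2$'' heuristic is not a derivation. Still, $-K_{OG(3,8)}=4(h_++h_-)$ is right. On a fibre $\PP^3$ of $\pi_+$, $-K$ restricts to $\cO(4)$, $h_+$ is trivial and $h_-$ restricts to $\cO(1)$, so the $h_-$-coefficient is $4$, and symmetrically for $h_+$; index $3$ follows. Your Bott check for $\rho_X=2$ does go through: $\cO(-h_\pm)$, $\cO(-2h_\pm)$, $\cO(-h_+-h_-)$, $\cO(-2h_\pm-h_\mp)$, $\Omega_{OG}(-h_\pm)$ and $\Omega_{OG}(-h_+-h_-)$ are all acyclic.

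A quicker route also shrinks the missing computation. Write $H_\pm$ for the hyperplanes defined by $\delta_\pm$. Then $\pi_-$ embeds each fibre $\PP^3$ of $\pi_+$ linearly in $\QQ^6_-$. For generic $\delta_-$, $H_-\cap\QQ^6_-$ is a smooth $\QQ^5$, which contains no $\PP^3$. Hence $X\to\QQ^6_+\cap H_+\simeq\QQ^5=G_2/P_1$ is a $G_2$-equivariant $\PP^2$-bundle over a homogeneous base. This gives $\rho_X=2$, the index and the Betti numbers $(1,2,3,3,3,3,2,1)$ at once. It also reduces quasi-homogeneity to showing that $P_1$ has an open orbit on one fibre $\PP^2$. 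Your rigidity paragraph is not needed for this statement.
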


\noindent {\it Betti numbers}: $(1,2,3,3,3,3,2,1)$. 

\proof Given two generic spinors $\delta_+\in\Delta_+$ and $\delta_-\in\Delta_-$, their common stabilizer in $Spin_8$ is isomorphic to $G_2$. This easily follows
from the description of triality in terms of octonions (one can suppose that $\delta_+$ and $\delta_-$ are equal to $1\in\OO$), or from the explicit computations, at the Lie algebra level, made in \cite{sk} (proof of Proposition 35). In fact these authors compute the common stabilizer of the spinor $\delta_+=1+e_{1234}$ and the vector $v=e_4-f_4$; but this is also the common stabilizer of $\delta_+$ and $\delta_-=v.\delta_+=e_4+e_{123}$. 

We claim that if $P=\langle e_1,f_2,e_4+f_3\rangle$, 
both $\delta_+$ and $\delta_-$ are killed by $\omega_P$. In other words, $[P]$ belongs to $X$. Moreover, using the explicit matrix \cite[(5.32)]{sk}, it is easy to check that the stabilizer of $P$ in $\fg_2$ has codimension $7$, which coincides with the dimension of $X$; so the orbit of $[P]$ must be dense in $X$. \qed

\medskip\noindent {\it Remark}.
We observed in the proof that the action of $Spin_8$ on pairs of spinors (up to scalar) is quasi-transitive: the homogeneous space 
$$Spin_8/G_2\hookrightarrow \PP(\Delta_+)\times \PP(\Delta_-)$$
embeds as an open and dense orbit.

\medskip\noindent {\it Remark}.
The $G_2$-action on $(OG(3,8), \cS_+\oplus \cS_-)$ being quasi-homogeneous, the open orbit must be $G_2/H$ for $H$ a closed subgroup of a  parabolic subgroup $P$ of $G_2$: indeed, by Dynkin's classification, 
the maximal connected subgroups of $G_2$ are the parabolics and subgroups of types  $A_2, A_1\times A_1, A_1$; only $A_2$ has dimension bigger than six, but it has no codimension one closed subgroup, hence the claim. 
As a consequence, we should have a natural rational map $X\dashrightarrow G_2/P$. But this is clear, since we have an embedding $X\hookrightarrow \QQ^5\times\QQ^5$, compatible with the action of $G_2$ on $\QQ^5\simeq G_2/P_1$.  

In fact, the explicit computation shows that $H\simeq \mathbb{G}_m^2\rtimes U_5$, with $U_5$ unipotent.

\medskip\noindent {\it Remark.} Taking three sections of $\cS_+$ and $\cS_-$, we get a Fano threefold of Picard number two, with two birational maps to $\QQ^3$. The locus in $\QQ^3$ where the projection has non trivial fibers (all projective lines, in general) is a degeneracy locus of a morphism 
$$\wedge^3\cU\lra A_3\otimes \cO(-H),$$
where $H$ is the hyperplane class for the spinorial embedding, 
restricted to $\QQ^3\subset\QQ^6=OG(4,8)_+$.
By the Thom-Porteous formula, the class of this locus is the second Segre class of $\wedge^3\cU^\vee(-H)=\cU(H)$ 
(beware that $\det(\cU)=\cO(-2H)$). A computation shows that this can be expressed in terms of Schubert classes from $G(4,8)$ as $s_2(\cU(H))=\sigma_2$. Therefore, the exceptional locus of our projection is a smooth curve $C$ in $\QQ^3$ of degree 
$$d=\int_{OG(4,8)_+}s_2(\cU(H))H^4=\frac{1}{2}\int_{G(4,8)}\sigma_{4321}\sigma_2)\sigma_1^4=12$$
(recall that $H=\sigma_1/2$, and  $c_{top}(S^2\cU^\vee)=16\sigma_{4321}$ is the class of the disjoint union of $OG(4,8)_+$ and $OG(4,8)_-$, which must be corrected by dividing by two). Finally, $C$ is a degree four rational curve in $\PP^4$, and we recover case 
$(21)$ of  the Mori-Mukai classification \cite{mori-mukai}. Moreover, from the two projections we get a nice birational endomorphism of $\QQ^3$, completely similar to the cubo-cubic very special Cremona transformation of \cite{katz}.

\begin{prop}\label{OG27}
$(OG(2,9), \cS)=OG(2,7)$
\end{prop}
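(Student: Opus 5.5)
Following the proof of Proposition \ref{G2P2}, I would combine invariants with the symmetry coming from the generic stabilizer.

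\emph{Invariants.} On $OG(2,9)$ the spinor bundle $\cS$ has rank $2^{4-2-1}=2$, and $c_1(\cS)=\cL$ by the formula recalled above. Since $\dim OG(2,9)=2(9-2)-3=11$ and the index of $OG(2,9)$ is $9-3=6$, adjunction shows that $X=(OG(2,9),\cS)$, for a general spinor $\delta\in\Delta$, is a smooth Fano manifold of dimension $9$ and index $5$. These are exactly the dimension and index of $OG(2,7)$. The Koszul complex together with Bott's theorem should give Picard number one and show that $\cL_{|X}$ is the ample generator; this is a routine check.

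\emph{Symmetry.} By Table 1 the stabilizer of a generic spinor of $Spin_9$ is $Spin_7$, so $Spin_7$ acts on $X$. The point is to see that this copy of $Spin_7$ sits in $Spin_9$ through a splitting $V_9=V_7\oplus W_2$ with $W_2$ a nondegenerate plane. Indeed, under $Spin_8\subset Spin_9$ the spin module decomposes as $\Delta=\Delta_+\oplus\Delta_-$, and the generic stabilizer is the stabilizer in $Spin_8$ of a pair of generic spinors. By triality this is the stabilizer in $Spin_8$ of two independent vectors of $V_8$ spanning a nondegenerate plane, which is the $Spin_7$ just described; compare the computation in \cite{sk}, where the stabilizer of $\delta_+$ and $v$ was used. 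Given this, the isotropic planes $U\subset V_7$ form a closed $Spin_7$-orbit $OG(2,7)\subset OG(2,9)$. On it, $\cS_{|OG(2,7)}$ is the bundle with fiber the spin representation of $U^\perp/U\simeq (V_7\cap U^\perp)/U\oplus W_2$, so it decomposes as $\cS_7\otimes\Delta(W_2)$, with $\Delta(W_2)\simeq\CC^2$ and $\cS_7$ the rank-one spinor line bundle on $OG(2,7)$. The section $s$ is then given by a vector of $\Delta(W_2)$ tensored with the invariant element of $H^0(OG(2,7),\cS_7)=\Delta_7$, and this invariant is zero because $\Delta_7$ is irreducible for $Spin_7$. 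Equivalently, with Lemma \ref{kills}, I would check directly on the explicit spinor from \cite{sk} that $\omega_U\cdot\delta=0$ for every isotropic $U\subset V_7$. Either way $OG(2,7)\subset X$.

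\emph{Conclusion.} Both varieties are smooth and irreducible of dimension $9$: irreducibility of $X$ comes from connectedness via the Koszul complex, since $h^0(\cO_X)=1$. So $OG(2,7)\subset X$ forces equality.

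\emph{Main obstacle.} The delicate step is the vanishing on $OG(2,7)$: I must make sure the restriction of $\cS$ really is $\cS_7\otimes\Delta(W_2)$ with the $Spin_7$-equivariant structure I claimed. If it carried a nontrivial $Spin_7$-invariant section instead, the argument would fail. The fallback is the explicit verification with Lemma \ref{kills} on a basis of $V_7$, using the normal form of the generic spinor. This verification also matches the remark that for special spinors the zero locus changes, to the horospherical degeneration.
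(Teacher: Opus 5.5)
Your proof breaks at the identification of the stabilizer. The $Spin_7$ fixing a generic $\delta\in\Delta$ is \emph{not} the standard subgroup fixing a nondegenerate plane $W_2\subset V_9$. It lies in the $Spin_8$ fixing the nonisotropic image $v$ of $\delta$ under the equivariant quadratic map $\Delta\to V_9$, but it is embedded there through its spin representation, so that $V_9\simeq\CC\oplus\Delta_8$ as a $Spin_7$-module. This is the ``nonstandard'' embedding the paper insists on.

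Your triality step cannot produce what you want. Triality permutes $V_8,\Delta_+,\Delta_-$, so it turns a pair in $\Delta_+\oplus\Delta_-$ into a pair in two \emph{non-isomorphic} modules, never into two vectors of $V_8$. Moreover, a generic pair $(\delta_+,\delta_-)$ has stabilizer $G_2$, which is too small. In fact, for the splitting defined by $v$, one component vanishes: for $\delta=1+e_{1234}$ one has $\delta\in\wedge^{even}E=\Delta_+$.

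Your own vanishing argument exposes the contradiction. The standard $Spin_7$ acts on $\Delta\simeq\Delta(V_7)\otimes\Delta(W_2)$ without invariants, so the only spinor it fixes is $0$. Consequently, no $7$-dimensional $V_7\subset V_9$ can satisfy $OG(2,V_7)\subset X$. Otherwise, by equality of dimensions, $X=OG(2,V_7)$, and the span $V_7$ of its planes would be a $Spin_7$-stable subspace of $\CC\oplus\Delta_8$, which does not exist. Concretely, take $\delta=1+e_{1234}$ and $W_2=\langle v_9,e_4-f_4\rangle$. The isotropic plane $U=\langle e_1,e_2\rangle\subset W_2^\perp$ has $\omega_U.\delta=e_{12}\neq 0$, so your fallback check via Lemma \ref{kills} fails too.

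The missing idea is to use the adjoint varieties with the correct embedding. The representation $\fso_9$ restricts to the stabilizer as $\fso_7\oplus V_7\oplus\Delta_8$. The closed $Spin_7$-orbit in $\PP(\fso_7)\subset\PP(\fso_9)$ is a copy of $OG(2,7)$ inside the adjoint variety $OG(2,9)$. Its planes span all of $V_8\simeq\Delta_8$ rather than lying in some $V_7$. Then one point suffices. In the model $\fso_7=\fsl_4\oplus\wedge^2\CC^4$ of the stabilizer of $1+e_{1234}$, the highest root vector is $e_1\wedge f_4$, i.e.\ the plane $\langle e_1,f_4\rangle$. This plane kills $\delta$, and $Spin_7$-invariance then puts the whole orbit in $X$.

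Your invariants are also wrong. On $OG(2,9)$ the fiber of $\cS$ is the spin module of the $5$-dimensional space $U^\perp/U$, so $\cS$ has rank $4$ and $c_1(\cS)=2\cL$. Hence $X$ has dimension $7$ and index $4$, which are exactly the invariants of $OG(2,7)$. Dimension $9$ and index $5$ are not, so your closing dimension comparison fails as written. Likewise, the spinor bundle on $OG(2,7)$ has rank $2$, not $1$.
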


\proof A section of the rank four vector bundle $\cS$ is defined by an element of the spin representation $\Delta$ of $Spin_9$, whose dimension if $16$. According to 
\cite[Proposition 35]{sk}, the generic stabilizer $H$ of the action of $\CC^*\times Spin_9$ on $\Delta$ is locally isomorphic to $Spin_7$. More precisely, there is a subgroup $Spin_8\subset Spin_9$, stabilizing some hyperplane $V_8$ in $V_9$, such that $H\simeq Spin_7$ embeds in this $Spin_8$ in a nonstandard way. Hence an action of $Spin_7$ on $(OG(2,9), \cS)$.

Since $OG(2,9)$ is the adjoint variety of $Spin_9$, otherwise said, the unique closed orbit in $\PP(\fso_9)$, we should restrict this action to $Spin_7$. If we first restrict it to $Spin_8$,  the adjoint representation $\fso_9\simeq \wedge^2V_9$ decomposes into $\fso_8\oplus V_8$. 
Then $Spin_7$ embeds in $Spin_8$ in a nonstandard way, which means that $V_8$ is a spin representation of $Spin_7$; but the adjoint representation of $Spin_8$ certainly contains that of $Spin_7$, and must decompose as $\fso_7\oplus V_7$. We claim that the closed orbit $OG(2,7)$ of $Spin_7$ in $\PP(\fso_7)$ is contained in 
$(OG(2,9), \cS)$; this is enough to imply that they are in fact equal. 

In order to prove the claim,  we need a closer look to the proof of \cite[Proposition 35]{sk}. They choose an explicit element in $\Delta$, namely $\delta=1+e_{1234}$, 
and compute explicitly its stabilizer as the space of matrices of the form
$$M= \begin{pmatrix}
A & B \\ B^\dagger & -A^t
\end{pmatrix}$$
with $A\in\fsl_4$, $B$ skew-symmetric, and  $B^\dagger$ the symmetric of $B$ with respect to the antidiagonal. This recovers $\fso_7$ in the nonstandard  model 
$$\fso_7=\fsl_4\oplus \wedge^2V_4.$$
Here the maximal rank subalgebra $\fsl_4$ is generated by the long root spaces of $\fso_7$, while   $\wedge^2V_4$ is spanned by the six short root spaces. In particular, in this model the highest root  of $\fso_7$ coincides with the highest root of $\fsl_4$, and the highest root space may therefore be generated by $e_4^\vee\otimes e_1$. The corresponding element of $\fso_9$ is $e_1\wedge f_4$, which can of course be identified with the isotropic plane $\langle e_1,f_4\rangle$. This plane belongs to $(OG(2,9), \cS)$, more precisely, to the zero locus of the section defined by $\delta$,  because 
$e_1\wedge f_4$ obviously kills $\delta$. But then its whole $Spin_7$-orbit must be 
contained in 
 $(OG(2,9), \cS)$, and this orbit is $OG(2,7)$.
\qed 

\medskip Using e.g. \cite{Lie}, we get the decomposition 
\begin{equation}\label{tenquadrics} S^2\Delta^\vee=V_{2\omega_4}\oplus V_{\omega_1}\oplus \CC .
\end{equation}
This shows that the spinor variety $OG(4,9)$ is cut-out in $\PP(\Delta)$ by ten quadrics: nine quadrics parametrized by the natural representation $V_{\omega_1}\simeq\CC^9$, and an invariant quadric $Q$.  According to \cite[Proposition 5]{igusa}, the orbit decomposition in $\PP(\Delta)$ is extremely simple:

\begin{lemma}\label{orbitsPDelta} The action of $Spin_9$ on $\PP(\Delta)$ has only three orbits, namely: 
\begin{enumerate}
    \item the spinor variety $OG(4,9)$,
    \item the complement of $OG(4,9)$ in $Q$,
    \item the complement of $Q$.
\end{enumerate}
\end{lemma}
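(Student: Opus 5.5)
We prove that $Spin_9$ has exactly three orbits on $\PP(\Delta)$ by a dimension count for each orbit, using the data already recorded. Here $\dim \PP(\Delta)=15$. The spinor variety $OG(4,9)\simeq OG(5,10)_+$ is the closed orbit and has dimension $10$. By Table 1 (cf. \cite[Proposition 35]{sk}), the generic stabilizer of $Spin_9$ on $\PP(\Delta)$ is $Spin_7$, of dimension $21$. So the orbit of a generic point has dimension $36-21=15$ and is open. Since the stabilizer is reductive, the complement of the open orbit is a hypersurface. It is $Spin_9$-invariant, and by (\ref{tenquadrics}) the only invariant quadric is $Q$, coming from the trivial summand of $S^2\Delta^\vee$. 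A nonzero invariant of degree $d$ restricts to $\Delta_+$ of $Spin_{10}$ \dots{} but more simply, the ring of semi-invariants of a prehomogeneous space with reductive generic stabilizer is generated by the irreducible components of the complementary hypersurface. Since $Q$ is invariant and nonconstant, it vanishes exactly on a union of components of that complement. We will check that $Q$ does not vanish at $\delta=1+e_{1234}$; this is a direct evaluation using the explicit invariant pairing on spinors. Then $Q$ is nonzero on the open orbit, and we will show that $Q$ is irreducible, so that the complement of the open orbit is exactly $Q$, giving orbit (3).

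It remains to show that $Q\setminus OG(4,9)$ is a single orbit of dimension $14$. Since $Q$ is an irreducible quadric (a smooth quadric, because the invariant form is nondegenerate: $\Delta$ is self-dual and orthogonal for $Spin_9$), it is enough to show two things: that $Q\setminus OG(4,9)$ contains a point whose orbit has dimension $14$, and that there are no other intermediate orbits. For the first, we take a point of $\PP(\Delta_+)\setminus OG(5,10)_+$ of the form $e_{12}+e_{34}$-type, or more canonically, a pure spinor of $Spin_{10}$ restricted in a non-generic way. Concretely, we take $\delta_0=1+e_{12}$ (not pure for $Spin_9$, and isotropic for $Q$), and compute its stabilizer in $\fso_9$ using the formulas of \cite{sk}. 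We expect dimension $36-14=22$, a parabolic-type subalgebra.

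To rule out further orbits, we use the nine quadrics parametrized by $V_{\omega_1}$. Points of $Q\setminus OG(4,9)$ are those where the quadratic map $q\colon\Delta\to V_9$ is nonzero, and $Q(\delta)=0$ means that $q(\delta)$ is an isotropic vector (one expects $Q(q(\delta))$ to be proportional to $Q(\delta)^2$). The stabilizer of an isotropic line $\ell$ in $V_9$ acts on the fiber $\{\delta : q(\delta)\in\ell\}$, which is the spin module of $\ell^\perp/\ell\simeq\CC^7$ tensored with the line. On that fiber $Spin_7\times\CC^*$ acts with an open orbit whose complement is the cone over $\QQ^5$, and that cone has $q=0$. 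Combining the transitivity of $Spin_9$ on isotropic lines with this fiberwise transitivity gives a single orbit.

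The main obstacle is verifying the identity relating $q$ and $Q$, and the fiber description. We will do this by explicit computation in the model $\Delta=\wedge^*E\oplus(\ldots)$ with $V_9=E\oplus F\oplus\CC$. Alternatively, we can simply cite \cite[Proposition 5]{igusa}, which is the source of the statement.
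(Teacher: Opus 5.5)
The paper gives no argument of its own for this lemma: it is quoted from \cite[Proposition 5]{igusa}, so your closing fallback is exactly the paper's proof. Your direct route is genuinely different. Its core is sound: you fiber $Q\setminus OG(4,9)$ over the quadric $\QQ^7$ of isotropic lines in $V_9$ via $[\delta]\mapsto[q(\delta)]$, with fibers $\PP(\Delta_8)$ minus its invariant quadric. This buys more than the citation, since it also produces the stabilizer $(\GG_m\times G_2)\ltimes V_7$ that the paper uses right afterwards. As written, however, two steps fail.

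\emph{The test point is in the wrong orbit.} Your point $1+e_{12}=\exp(e_{12})\cdot 1$ is a pure spinor. The kernel of $m_{1+e_{12}}$ is $\{f+f(e_2)e_1-f(e_1)e_2 : f\in F\}$, which is four-dimensional and isotropic, and all ten quadrics vanish on it. So it lies in $OG(4,9)$ and its stabilizer is the $26$-dimensional parabolic, not a $22$-dimensional subgroup. For the same reason no pure spinor of $Spin_{10}$ can work, since $OG(5,10)_+$ and $OG(4,9)$ are the same subvariety of $\PP(\Delta)$. A correct choice is, for instance, $1+e_{123}$. Its kernel is $\langle f_4\rangle$, and $Q$ vanishes on it because $Q$ only pairs $\delta_I$ with $\delta_{I^c}$. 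The step is redundant anyway, since the fibration already gives a single orbit of dimension $7+7=14$.

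\emph{Irreducibility of $Q$ is not enough.} It only shows that $Q$ is \emph{one} component of the complement $S$ of the open orbit, so you must still exclude other components. Each component of $S$ is cut out by an irreducible $Spin_9$-invariant $f$. Generic $Spin_9$-orbits in $\Delta$ have codimension one, so invariants have transcendence degree one. Hence $f^a=cQ^b$, and $f$ is proportional to $Q$ by unique factorization. Equivalently, $\CC^*\times Spin_9$ has a rank-one character group, so there is only one basic relative invariant.

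The fiber description, which you leave as the main obstacle, is where the real content lies. Here is one clean way to get it. Write $V_{10}=V_9\oplus\CC v_9$ and $\Delta=\Delta_+$, so the $Spin_{10}$ quadratic map is $q_{10}(\delta)=q(\delta)+cQ(\delta)v_9$.

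\begin{enumerate}
\item Its values are isotropic, because $\Delta_+$ has no quartic $Spin_{10}$-invariant. This gives your identity $\langle q(\delta),q(\delta)\rangle\propto Q(\delta)^2$.
\item $q_{10}(\delta)\cdot\delta=0$ identically, because $\Delta_-$ does not occur in $S^3\Delta_+$; this is $V_{3\omega_5}\oplus V_{\omega_1+\omega_5}$ if $\Delta_+=V_{\omega_5}$.
\item Hence if $Q(\delta)=0\neq u=q(\delta)$, then $\delta$ lies in $\ker(u\cdot)$. This is the $8$-dimensional spin module of $u^\perp/u$, and the unipotent radical of the stabilizer of $[u]$ acts trivially on it.
\item The pure points of $\PP(\ker(u\cdot))$ form $OG(3,7)\simeq\QQ^6$, not $\QQ^5$ as you wrote. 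Its complement in $\PP^7$ is a single $Spin_7$-orbit.
\end{enumerate}

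The same argument handles the open orbit directly and removes the irreducibility question. Take $u=q_{10}(\delta)$ not orthogonal to $v_9$: the base is then $\QQ^8\setminus\QQ^7$, the Levi is $Spin_8$, and the fiber is a half-spin module.
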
 

The naive expectation would be that if a spinor $\delta$ does not belong to the open orbit, the zero-locus in $OG(2,9)$ of the corresponding section of $\cS$ should be singular. Surprisingly, this is not the case. In fact, $OG(2,7)$ exhibits this unique property among rational homogeneous spaces of Picard number one, not to be globally rigid: as already noticed in \cite[Proposition 2.3]{pasper}, it admits a smooth degeneration to the so-called $G_2$-horospherical variety. We will prove the following:

\begin{prop}\label{horo} For $[\delta]\in Q$, not a pure spinor, the zero-locus $X_\delta$ in $OG(2,9)$ of the corresponding section of $\cS$ is a copy of the $G_2$-horospherical variety. \end{prop}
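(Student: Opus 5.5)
The plan is to make everything explicit with a well-chosen representative of the middle orbit of Lemma \ref{orbitsPDelta}, and then to recognize $X_\delta$ through Pasquier's classification of smooth projective horospherical varieties of Picard number one \cite{pasquier}. Restrict to $Spin_8\subset Spin_9$, under which $\Delta=\Delta_+\oplus\Delta_-$. I would take $\delta=\delta_++\delta_-$ with $\delta_\pm$ pure spinors of $Spin_8$, for instance $\delta=1+e_{123}$ or a similar monomial combination. Such a $\delta$ is killed by the invariant quadric $Q$ of (\ref{tenquadrics}), but is not killed by the nine quadrics of $V_{\omega_1}$. By Lemma \ref{orbitsPDelta}, any such spinor represents the unique orbit $Q\setminus OG(4,9)$, so it suffices to treat this single $\delta$. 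The stabilizer of $[\delta]$ in $Spin_9$ has dimension $36-14=22$. I would compute its Lie algebra $\mathfrak h$ directly with the matrix model used in \cite{sk}. I expect a non-reductive algebra of the form $(\fg_2\oplus\CC)\ltimes\mathfrak u$, with $\mathfrak u$ unipotent of dimension $7$ and isomorphic to $V_7$ as a $G_2$-module. This agrees with the known automorphism group of the $G_2$-horospherical variety, which serves as a sanity check.

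Next I would establish the basic geometry of $X_\delta$. The section is still nonzero, and $\cS$ is globally generated of rank $4$. I would show that $X_\delta$ has the expected dimension $7$ and is smooth. Since the Jacobian criterion at an arbitrary point is unwieldy, I would prove smoothness orbit by orbit. By Lemma \ref{kills}, a point $[U]$ lies in $X_\delta$ iff $\omega_U.\delta=0$. Using the $H$-action, I would show that $X_\delta$ decomposes into finitely many $H$-orbits, and then check the surjectivity of the differential of the section at one representative of each closed orbit; smoothness then propagates along orbits. Adjunction gives $K_{X_\delta}=-4\cL$, the same index as $OG(2,7)$, and a Lefschetz-type argument via the Koszul complex and Bott's theorem gives Picard number one.

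The identification step is as follows. The $G_2\subset H$ should act on $X_\delta$ with exactly three orbits: an open one, and two closed orbits isomorphic to $G_2/P_1\simeq\QQ^5$ and $G_2/P_2$. I would exhibit the closed orbits explicitly, as isotropic planes in $V_9$ killing $\delta$ whose $G_2$-stabilizers are the two maximal parabolics. This mimics the argument in Proposition \ref{OG27}, where a highest root vector gave the closed orbit. I would then locate a point whose stabilizer in $\fg_2\oplus\CC$ has codimension $7$ and contains a maximal unipotent subalgebra, which proves that the open orbit is horospherical. Pasquier's theorem says that a smooth projective horospherical $G_2$-variety of Picard number one, with closed orbits $G_2/P_1$ and $G_2/P_2$, is the $G_2$-horospherical variety. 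This concludes the proof.

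I expect the main obstacle to be the smoothness and orbit analysis of $X_\delta$, in particular ruling out extra $G_2$-orbits and proving smoothness along the closed orbit $\QQ^5$, where degeneration phenomena would concentrate. If the direct orbit analysis proves too heavy, my fallback is a degeneration argument. Move $\delta_t$ along a line from a generic spinor to $\delta$. The family $X_{\delta_t}$ is then flat with generic fiber $OG(2,7)$ by Proposition \ref{OG27}. I would prove smoothness of $X_\delta$ by upper semicontinuity of the dimension of the singular locus combined with $H$-invariance. Since $H$ is connected and has no small orbits compatible with a nonempty singular locus, that locus must be empty. I would then conclude with Pasquier's rigidity and uniqueness results for smooth degenerations of $OG(2,7)$, as in \cite{pasper}.
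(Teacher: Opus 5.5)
Your main plan is sound, but it runs in the opposite direction from the paper's.

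\textbf{The paper's route.} The paper fixes the point rather than the spinor. By homogeneity of $OG(2,9)$ it puts a putative singular point at $\langle e_1,e_2\rangle$ and writes the differential $T_{[P]}OG(2,9)\to\cS_{[P]}$ as a $4\times 11$ matrix in the coordinates $\delta_I$. It then checks with \emph{Macaulay2} that the radical of the ideal of $4$-minors contains the ideal of $OG(4,9)$. This gives smoothness, and the expected dimension $7$, for every non-pure $\delta$ at once, without using the stabilizer. The identification is then short. $X_\delta$ is a smooth fiber of the flat family of zero loci, hence a smooth degeneration of $OG(2,7)$. By \cite{hwang-li} it is therefore either $OG(2,7)$ or the horospherical variety. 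Igusa's non-reductive stabilizer $(G_2\times\GG_m)\rtimes V_7$ matches the automorphism group of the latter \cite{pasquier}, which settles it.

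\textbf{Your route.} You instead fix $\delta$ and exploit $H$. This works, and more cheaply than you fear.
\begin{itemize}
\item The non-transversality locus of $s_\delta$ is closed and $H$-stable. So if it is nonempty, it contains a closed $H$-orbit; finiteness of orbits is not needed.
\item Points of a closed $H$-orbit have stabilizers containing a Borel subgroup of $H$. Hence they are fixed by the unipotent radical $\mathfrak u=L\otimes V_7$, where $L=\ker m_\delta$ is the isotropic line canonically attached to $\delta$.
\item The $\mathfrak u$-stable isotropic planes are exactly those containing $L$. They form a single $G_2$-orbit $\QQ^5$, contained in $X_\delta$. Your $G_2/P_2$ is a closed $G_2$-orbit but is not $H$-stable.
\item So one Jacobian check suffices. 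For instance, for $\delta=e_1+e_{1234}$ (with $L=\langle e_1\rangle$) at $\langle e_1,e_2\rangle$, the paper's matrix visibly has rank $4$.
\end{itemize}
Your route thus gives a computer-free, conceptual smoothness proof together with the orbit picture, at the price of knowing $H$ explicitly.

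\textbf{Final identification.} Pasquier's classification is valid here, but it forces extra verifications: connectedness, $\rho=1$, an explicit horospherical point, and the two closed orbits. Hwang--Li's theorem makes all of these unnecessary once transversality, hence flatness, is known.

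\textbf{Your fallback.} It does not work as stated. Upper semicontinuity gives no control on the singular locus of a special fiber when the general fiber is smooth. Moreover, $H$ does have small orbits: $\QQ^5$ is a closed $5$-dimensional $H$-orbit inside $X_\delta$, so invariance alone rules nothing out. Also, the uniqueness of smooth degenerations of $OG(2,7)$ is due to \cite{hwang-li}, not \cite{pasper}.
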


This provides a simple and natural explanation for the existence of this degeneration, and answers a question by A. Kuznetsov. In fact, we will only need to prove that $X_\delta$ is smooth: indeed, by a result of \cite{hwang-li}, the only possible smooth degenerations of $OG(2,7)$ are $OG(2,7)$ itself, and the $G_2$-horospherical variety. They can be distinguished by their automorphism groups, which for the latter variety is, according to \cite[Theorem 1.11]{pasquier}, a semidirect product $(G_2\times \GG_m)\rtimes V_7$, where $V_7$ denotes the seven-dimensional irreducible representation of $G_2$. According to \cite[Proposition]{igusa}, this is also the stabilizer in $Spin_9$ of a generic $[\delta]\in Q$, which as usual will be identified with the automorphism group of $X_\delta.$ So we just need to prove the following:

\begin{lemma}
Suppose that $\delta\in\Delta$ is a nonzero spinor whose associated $X_\delta\subset\PP(\Delta)$ is singular. Then $\delta$ is pure.
\end{lemma}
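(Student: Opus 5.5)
Throughout, $X_\delta$ denotes the zero-locus in $OG(2,9)$ of the section of $\cS$ defined by $\delta$. The plan is to describe singular points of $X_\delta$ at the tangent level and then use homogeneity to reduce to a normal form. Since $\cS$ is globally generated of rank $4$ and $\dim OG(2,9)=11$, a point $[U]\in X_\delta$ is singular exactly when the differential of the section at $[U]$, a map $T_{[U]}OG(2,9)\to \cS_{[U]}$, fails to be surjective. By Lemma \ref{kills}, $[U]\in X_\delta$ means $\delta=u_1.\delta_1+u_2.\delta_2$ for a basis $u_1,u_2$ of $U$. Infinitesimally moving $U$ by $\phi\in Hom(U,U^\perp/U)$ and modifying $\delta_1,\delta_2$, the differential is induced by $\phi\mapsto \phi(u_1).\delta_1+\phi(u_2).\delta_2$ modulo $U.\Delta$. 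The $\wedge^2\cU^\vee$ directions contribute nothing new modulo $U.\Delta$. So the singularity condition is a linear-algebra condition on the pair $(\delta_1,\delta_2)$, taken modulo the ambiguity coming from $u_1.u_2.\Delta$.

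First I use the transitivity of $Spin_9$ on $OG(2,9)$ to fix $U=\langle e_1,e_2\rangle$, with $V_9=E\oplus F\oplus\CC e_0$ and $\Delta=\wedge^\bullet E$, $E=\langle e_1,\dots,e_4\rangle$. Then $\delta\in\wedge^\bullet E$ lies in the ideal generated by $e_1,e_2$. Modulo $e_{12}\wedge\wedge^\bullet E$, I write $\delta=e_1\wedge\alpha+e_2\wedge\beta$ with $\alpha,\beta\in\wedge^\bullet\langle e_3,e_4\rangle$. The fiber $\cS_{[U]}$ is then identified with the spin representation $\wedge^\bullet\langle e_3,e_4\rangle$ of $\fso(U^\perp/U)\simeq\fso_5$, which is $4$-dimensional. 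The tangent directions are $U^\vee\otimes U^\perp/U$, and $U^\perp/U=\langle e_3,e_4,f_3,f_4,e_0\rangle$. I then compute the image of the differential as the span of $w.\alpha$ and $w.\beta$ for $w\in U^\perp/U$, each acting by Clifford multiplication on $\wedge^\bullet\langle e_3,e_4\rangle$. This image is $V_5.\alpha+V_5.\beta$ inside the $4$-dimensional spin module of $Spin_5=Sp_4$.

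The key observation is that Clifford multiplication by $V_5$ on any nonzero spinor $\alpha$ of $Spin_5$ is surjective onto $\Delta_5$. Indeed $w.w.\alpha=q(w)\alpha$, so $w\mapsto w.\alpha$ is injective on anisotropic vectors, and its kernel is an isotropic subspace which is at most $1$-dimensional (all spinors of $Spin_5$ are pure). So the rank is at least $4$. Hence non-surjectivity of the differential forces $\alpha=\beta=0$, that is, $\delta\in e_{12}\wedge\wedge^\bullet E$. I must double check that the $\wedge^2\cU^\vee$ part and the $e_{12}$-component of $\delta$ do not alter this reduction; I expect them not to, since they only change the choice of $\delta_i$.

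It remains to show that a spinor singular at $U$, meaning one killed by Clifford multiplication by both $e_1$ and $e_2$, need not be pure in general, so the conclusion needs more: I would therefore exploit that $X_\delta$ singular only gives $\delta=e_{12}\wedge\gamma$ with $\gamma\in\wedge^\bullet\langle e_3,e_4\rangle$. Such $\delta$ is a pure spinor of $Spin_9$ for the $\fso_5$-spin $\gamma$, because every nonzero $\gamma$ is pure for $\fso(U^\perp/U)$ and $e_{12}\wedge(\text{pure})$ is pure. Concretely, its annihilator is $U$ plus a maximal isotropic subspace of $U^\perp/U$ killing $\gamma$. This last step is the main obstacle: making the identification of the linearized condition precise, in particular the correct Clifford action at the unipotent level, and verifying that vanishing modulo the correction terms really yields $\alpha=\beta=0$ rather than a weaker condition. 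I would control this by checking it against the orbit list of Lemma \ref{orbitsPDelta}, testing one explicit representative of each of the three orbits.
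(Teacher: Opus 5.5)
Your central step fails, for two reasons that do not cancel.

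First, the rank claim is wrong. Take a nonzero spinor $\alpha\in\wedge^\bullet\langle e_3,e_4\rangle$ of $\fso(U^\perp/U)\simeq\fso_5$. The kernel of $w\mapsto w.\alpha$ is the annihilator of $\alpha$. Precisely because $\alpha$ is pure, this annihilator is a maximal isotropic subspace of $U^\perp/U$, of dimension $2$, not at most $1$. Hence $(U^\perp/U).\alpha$ is only $3$-dimensional: it is the orthogonal $\alpha^\perp$ of $\alpha$ for the invariant symplectic form $\omega$ on $\wedge^\bullet\langle e_3,e_4\rangle$. For $\alpha=1$ you get $\langle 1,e_3,e_4\rangle$, since $f_3,f_4$ kill $1$.

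Second, the $\wedge^2\cU^\vee$ direction is not negligible. Write $\delta=e_1\wedge\alpha+e_2\wedge\beta+e_{12}\wedge\gamma$. The element $f_1\wedge f_2\in\fso_9$ acts by double contraction and sends $\delta$ to $\pm\gamma$ modulo $U.\Delta$; this is the first column of the $4\times 11$ matrix in the paper. So the $e_{12}$-component cannot be discarded.

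The image of the differential is therefore $\CC\gamma+\alpha^\perp+\beta^\perp$, and your intermediate conclusion $\alpha=\beta=0$ is false. For $\delta=e_1$ one has $\alpha=1$, $\beta=\gamma=0$, the image is $\langle 1,e_3,e_4\rangle$, and $X_{e_1}$ is singular at $\langle e_1,e_2\rangle$. The cases you discarded are exactly where the content of the lemma lies. Your fallback does not close the gap either. Testing a representative $\delta\in Q\setminus OG(4,9)$ of the orbits of Lemma \ref{orbitsPDelta} means checking smoothness of $X_\delta$ at every one of its points, which is the statement itself.

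The approach can be repaired, and it then gives a conceptual substitute for the paper's argument. The paper writes down the same matrix and checks with \textit{Macaulay2} that the radical of the ideal of its $4$-minors contains the ideal of $OG(4,9)$.

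\begin{itemize}
\item Surjectivity criterion. When $\alpha,\beta$ are independent, $\alpha^\perp$ and $\beta^\perp$ are distinct hyperplanes. So the differential fails to be surjective if and only if $\alpha,\beta$ span at most a line $\CC\alpha_0$ and $\omega(\alpha_0,\gamma)=0$.
\item Case $\alpha=\beta=0$. Then $\delta=e_{12}\wedge\gamma$ is pure, as you say.
\item Other case. After a change of basis of $U$ you may assume $\beta=0$. Then $\delta=e_1\wedge\psi$ with $\psi=\alpha_0+e_2\wedge\gamma$ a spinor for $Spin(e_1^\perp/\CC e_1)\simeq Spin_7$.
\item Purity of $\psi$. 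The $Spin_7$-invariant quadratic form, restricted to $Spin_5$, must be a nonzero multiple of $\omega(\alpha_0,\gamma)$. Indeed this is the only $Spin_5$-invariant symmetric form on $\Delta_5\otimes\CC^2$, since $S^2\Delta_5\simeq\fsp_4$ has no invariants. Pure spinors of $Spin_7$ form exactly the invariant quadric, so $\psi$ is pure.
\item Conclusion. $\delta=e_1\wedge\psi$ is pure, its annihilator being $\CC e_1$ plus the annihilator of $\psi$.
\end{itemize}
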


\proof
Recall that we identify $\Delta$ with $\wedge^\bullet E$, for $E$ a maximal isotropic subspace of $V_9$, with basis $e_1,\ldots , e_4$. A spinor $\delta = \sum_I\delta_Ie_I$ is pure when the kernel of the natural map
\begin{align*}
    m_\delta : V_9 &\longrightarrow \Delta\\
    e + f + av_9 &\longmapsto e\wedge \delta + f\lrcorner\, \delta + \sum_I (-1)^{|I|}a\delta_Ie_I
\end{align*}
is a maximal isotropic subspace. This subspace is uniquely determined by $\delta$. The projectivization of the locus of pure spinors is thus identified with  the orthogonal Grassmannian in its spinorial embedding $OG(4,9) \hookrightarrow \mathbb{P}(\Delta)$.

Suppose that $X_\delta$ is singular; by homogeneity of $OG(2,9)$ we may suppose it is singular at point defined by the isotropic plane $P=\langle e_1,e_2\rangle$. In particular, that  $X_\delta$ contains this point means that $\delta$ has zero coefficient $\delta_I$ on $e_I$ when $I$ does not contain $1$ or $2$. That $X_\delta$ is singular at $[P]$ is detected by the non-surjectivity of the natural map $T_{[P]}OG(2,9)\lra \cS_{[P]}$, which can be defined as follows: if a tangent vector is represented by $X\in\fso_9$, its image is simply the class of the spinor $X.\delta$ modulo $P$. This is easy to compute explicitly, and we conclude that the smoothness of $X_\delta$ at  $[P]$ is equivalent to the condition that the following matrix has full rank:

$$\begin{pmatrix} 
\delta_{12} & \delta_{1} & \delta_{2} & 0& 0& 0 
   & 0 & \delta_{13} & \delta_{14} & \delta_{23} & \delta_{24} \\
\delta_{123} & -\delta_{13} & -\delta_{23} &\delta_1&\delta_2&0 
 &0  &0 & -\delta_{134} & 0 & -\delta_{234} \\
\delta_{124} & -\delta_{14} & -\delta_{24} &0&0&-\delta_1  
 &-\delta_2  & \delta_{134} & 0 & \delta_{234} & 0 \\
\delta_{1234} & \delta_{134} & \delta_{234} & \delta_{14} & \delta_{24} & 
 \delta_{13}  & \delta_{23}  &0&0&0&0
\end{pmatrix}$$

\medskip 

We can compute with the help of \textit{Macaulay2} the radical of the ideal generated by the $4$-minors of this matrix in $\mathbb{P}(\Delta)$. It turns out that this ideal contains the ideal cut out by the ten quadrics that define  $OG(4,9)$. They are respectively 
\begin{align*}
    J = \langle &\delta_0, \delta_3, \delta_4,\delta_{34},  
    \delta_{24}\delta_{134} - \delta_{14}\delta_{234}, \delta_{23}\delta_{134} - \delta_{13}\delta_{234}, \delta_2\delta_{134} - \delta_1\delta_{234}, 
    \\&\delta_{24}\delta_{123} - \delta_{23}\delta_{124} + \delta_{12}\delta_{234} - \delta_2\delta_{1234},\delta_{14}\delta_{123} - \delta_{13}\delta_{124} + \delta_{12}\delta_{134} - \delta_1\delta_{1234},  \\
    &
    \delta_{14}\delta_{23} - \delta_{13}\delta_{24}, \delta_2\delta_{14} - \delta_1\delta_{24}, \delta_2\delta_{13} - \delta_1\delta_{23} \rangle
\end{align*}
and
\begin{align*}
    I = \langle &\delta_{34}\delta_{124} - \delta_{24}\delta_{134} + \delta_{14}\delta_{234} - \delta_4\delta_{1234}, 
    \delta_{34}\delta_{123} - \delta_{23}\delta_{134} + \delta_{13}\delta_{234} - \delta_3\delta_{1234}, \\
    &\delta_{24}\delta_{123} - \delta_{23}\delta_{124} + \delta_{12}\delta_{234} - \delta_2\delta_{1234}, 
    \delta_{14}\delta_{123} - \delta_{13}\delta_{124} + \delta_{12}\delta_{134} - \delta_1\delta_{1234}, \\
    &\delta_4\delta_{123} - \delta_3\delta_{124} + \delta_2\delta_{134} - \delta_1\delta_{234}, 
    \delta_{14}\delta_{23} - \delta_{13}\delta_{24} + \delta_{12}\delta_{34} - \delta_0\delta_{1234}, \\
    &\delta_4\delta_{23} - \delta_3\delta_{24} + \delta_2\delta_{34} - \delta_0\delta_{234}, 
    \delta_4\delta_{13} - \delta_3\delta_{14} + \delta_1\delta_{34} - \delta_0\delta_{134}, \\
    &\delta_4\delta_{12} - \delta_2\delta_{14} + \delta_1\delta_{24} - \delta_0\delta_{124}, 
    \delta_3\delta_{12} - \delta_2\delta_{13} + \delta_1\delta_{23} - \delta_0\delta_{123} \rangle.
\end{align*}
Therefore, the locus of spinors such that $X_\delta$ is singular is contained in $OG(4,9)$, that is, the locus of pure spinors. 
\qed

\medskip

\medskip 
Let us now turn to $OG(3,9)$. An important ingredient in the proof of Proposition \ref{OG27} was the observation that there exists a nonzero invariant quadratic form  $S^2\Delta\lra\CC$ (that defines the quadric $Q$, or the constant factor in (\ref{tenquadrics})), and also a nontrivial equivariant morphism $S^2\Delta\lra V_9$ also apparent from (\ref{tenquadrics}) (note that $\Delta\simeq\Delta^\vee)$. Seen as a quadratic map from $\Delta$ to $V_9$,
this map sends a generic spinor $\delta$ to a generic vector, in particular non isotropic, and this vector induces an orthogonal splitting $V_9=V_1\oplus V_8$. The stabilizer of $\delta$ gets embedded inside 
the corresponding $Spin_8\subset Spin_9$. Moreover the restriction of $\Delta$ to 
$Spin_8$ splits into $\Delta_+\oplus \Delta_-$, which up to an outer automorphism is equivalent to $V_8\oplus \Delta_+$. Then the restriction of $V_8$ to $Spin_7$ splits into $\CC\oplus V_7$, where the stable line must be generated by $\delta$. On the other hand, the restriction of $V_9$ to $Spin_8$ is $\CC\oplus V_8$, which under our outer automorphism becomes $\CC\oplus\Delta_-$, so that the restriction to $Spin_7$ yields $\CC\oplus\Delta_8$, where $\Delta_8$ is the spin representation of $\fso_7$. Therefore, in order to determine the  generic stabilizer of the action of $Spin_7$ on $OG(3,\Delta)$, we are reduced to $OG(3,\CC\oplus\Delta_8)$, hence to determine a subgroup of the generic stabilizer of  the action of $Spin_7$ on $OG(2,\Delta_8)$. 
We start with a Lemma. 

\begin{lemma}
The blow-up of $OG(2,\Delta_8)$ along $OG(2,V_7)$ is a $\QQ^4$-bundle over $\QQ^5$.
\end{lemma}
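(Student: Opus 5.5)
The idea is to use triality to turn the nonstandard action of $Spin_7$ on $\Delta_8$ into the standard action on $V_8=\CC\oplus V_7$, and then realize the blow-up as an explicit incidence variety.

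\textbf{Reduction by triality.} As in the proof of $(OG(2,8),\cS_+)=OG(2,7)$, the outer automorphisms of $Spin_8$ permute $V_8,\Delta_+,\Delta_-$ and preserve the adjoint variety $OG(2,8)\subset\PP(\fso_8)$. The copy of $Spin_7$ for which $\Delta_8$ is the spin representation is the image, under such an automorphism, of the standard $Spin_7\subset Spin_8$, namely the stabilizer of a non-isotropic vector $v\in V_8$. So the pair $(OG(2,\Delta_8),Spin_7)$ is identified with $(OG(2,V_8),Spin_7)$, where $V_8=\CC v\oplus V_7$ and $V_7=v^\perp$. Under this identification, the closed $Spin_7$-orbit $OG(2,V_7)$ becomes the locus of isotropic planes contained in the hyperplane $V_7$. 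This locus has dimension $7$, hence codimension $2$ in the $9$-dimensional $OG(2,V_8)$. (One should check that the copy of $OG(2,V_7)$ in the statement is indeed this one. Since it is the unique closed $Spin_7$-orbit of that dimension, this follows from homogeneity.)

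\textbf{The incidence variety.} Consider
$$Z=\{(L,P)\,:\, L\subset P\cap V_7\}\subset \QQ^5\times OG(2,V_8),$$
where $L$ runs over isotropic lines of $V_7$, that is, over $\QQ^5\subset\PP(V_7)$. The fiber of $Z\to\QQ^5$ over $L$ is the set of isotropic planes $P\supset L$. These correspond to isotropic lines in the $6$-dimensional quadratic space $L^\perp/L$, where $L^\perp$ is taken in $V_8$. So the fiber is $\QQ^4$, and $Z$ is a $\QQ^4$-bundle over $\QQ^5$. Concretely, $Z$ is the relative quadric bundle of the rank $6$ orthogonal bundle $\cL^\perp/\cL$ on $\QQ^5$. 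In particular $Z$ is smooth of dimension $9$.

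\textbf{Identification with the blow-up.} Let $\pi:Z\to OG(2,V_8)$ be the second projection.
\begin{itemize}
\item If $P\not\subset V_7$, then $P\cap V_7$ is a single isotropic line. Hence $\pi$ is an isomorphism over the complement of $OG(2,V_7)$.
\item Over $[P]\in OG(2,V_7)$, the fiber of $\pi$ is $\PP(P)\simeq\PP^1$.
\end{itemize}
Therefore $E=\pi^{-1}(OG(2,V_7))$ is a $\PP^1$-bundle over a smooth codimension-two center, and it is a divisor in the smooth variety $Z$.

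I would then argue that the ideal of $OG(2,V_7)$ pulls back to the ideal of the Cartier divisor $E$, with multiplicity one. This can be checked in local coordinates near a point of $OG(2,V_7)$, or more conceptually: the condition $P\subset V_7$ is the vanishing of the composite map $\cU\to V_8\to V_8/V_7=\CC$, which is a section of $\cU^\vee$. On $Z$ this section factors through the line bundle $\cU/\cL$, and it vanishes exactly along $E$. By the universal property of blow-ups, $\pi$ then factors through a morphism $Z\to \mathrm{Bl}_{OG(2,V_7)}OG(2,V_8)$. This morphism is birational and bijective, since the fibers of both varieties over the center are $\PP(N)\simeq\PP^1$ and coincide. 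It is a morphism between smooth varieties, so it is an isomorphism by Zariski's main theorem.

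\textbf{Main obstacle.} The step I expect to be most delicate is the reduced pull-back of the ideal, i.e. showing that the factorization through the blow-up exists and is an isomorphism rather than merely bijective. The transversality of the section of $\cU^\vee$, together with the identification of the normal bundle of $OG(2,V_7)$ in $OG(2,V_8)$ with $\cU^\vee$ restricted to $OG(2,V_7)$, should settle it.
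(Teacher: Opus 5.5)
Your proof is correct. Its skeleton is the paper's: an incidence variety over $\QQ^5$ whose first projection is an isomorphism off the codimension-two locus $OG(2,V_7)$ and a $\PP^1$-bundle over it, and whose second projection has fibres $\QQ^4$. The difference is the model in which you carry it out.

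The paper stays on the spinorial side. A point of $OG(2,\Delta_8)$ is a line in the quadric $OG(3,V_7)\subset\PP(\Delta_8)$, and the rational map sends it to the line $\ell$ common to the $3$-planes it parametrizes. The fibre over $\ell$ is then the variety of lines in $OG(2,\ell^\perp/\ell)\simeq\PP^3$, that is, $G(2,4)=\QQ^4$.

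You instead use triality to replace the nonstandard $Spin_7\subset Spin(\Delta_8)$ by the stabilizer of a non-isotropic $v\in V_8$. You then locate the center as the unique closed orbit. This is right: the complement of $OG(2,V_7)$ in $OG(2,V_8)$ is a single orbit, because $SO_5$ is transitive on vectors of a fixed nonzero norm in $(L^\perp\cap V_7)/L$. The rational map becomes simply $P\mapsto P\cap V_7$, and the fibre is the quadric of $\cL^\perp/\cL$, which matches the paper's $G(2,4)$ through $\fso_6\simeq\fsl_4$.

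Your route makes everything elementary linear algebra, and it lets you actually prove the blow-up identification, which the paper only asserts from the $\PP^1$-fibre structure. The section of $\cU^\vee$ induced by $V_8\to V_8/V_7$ cuts out $OG(2,V_7)$ transversally. On $Z$ it factors through the line bundle $(\pi^*\cU/\cL)^\vee$, so the inverse image ideal is invertible.

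The same observation settles the step you left vague. The morphism $Z\to\mathrm{Bl}$ is given by the invertible quotient $\pi^*\cU\to\pi^*\cU/\cL$. So at $(L,P)\in E$, the image point in the exceptional fibre (the rank-one quotients of $P$) is $P\to P/L$, which determines $L$. Hence the map is bijective, and Zariski's main theorem applies. Likewise, over the open orbit the kernel of the nowhere-vanishing map $\cU\to V_8/V_7$ gives an inverse to $\pi$, so $\pi$ is an isomorphism there, not merely a bijection.

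The paper's spinorial model has its own advantage: it presents the fibre directly as lines in $\PP(\Delta_4)$. That is exactly the form used next, via $\fso_5\simeq\fsp_4$, to compute the generic stabilizer.
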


\proof Recall that $\PP(\Delta_8)$ contains $OG(3,V_7)$  as the invariant quadric hypersurface. So $OG(2,\Delta_8)$ parametrizes (spinorial) lines in 
$OG(3,V_7)$, which are conics for the Pl\"ucker embedding. 
Given a plane $P\in OG(2,V_7)$, the isotropic $3$-planes that contain $P$ are parametrized by such a conic. Hence an embedding $OG(2,V_7)\hookrightarrow OG(2,\Delta_8)$.  

Now, given a (spinorial) line $\delta$ in $OG(3,V_7)$, the $3$-planes it parametrizes must meet non trivially. Generically, they have a common line $\ell\subset V_7$. This defines a rational map $OG(2,\Delta_8)\dashrightarrow \QQ^5$, whose exceptional locus is $OG(2,V_7)$. 

To resolve this rational map, we consider the incidence variety $\cI$ parametrizing pairs $(\delta,\ell)$ as above. The first projection is an isomorphism outside 
the codimension two subvariety $OG(2,V_7)$, and a $\PP^1$-bundle over it; so it has to coincide with its blowup. The fiber over $\ell$ of the second projection is the variety of (spinorial) lines inside $OG(2,\ell^\perp/\ell)\simeq OG(2,5)$. But the latter is just $\PP^3$, and its lines are parametrized by $G(2,4)=\QQ^4$. \qed 

\medskip Using this identification, we  can deduce the stabilizer $\Gamma_P\subset Spin_7$ of a generic point $P\in OG(2,\Delta_8)$. Indeed, by the previous Lemma, belonging to $\Gamma_P$ is equivalent to fixing  an isotropic line  $\ell\subset V_7$, and then a line in $OG(2,\ell^\perp/\ell)\simeq\PP^3$. At this point it is convenient to use the isomorphism $\fso_5\simeq\fsp_4$, which identifies the spinorial representation of $\fso_5$ with the natural representation $V_4$ of $\fsp_4$. We are then reduced to computing the stabilizer in $\fsp_4$ of a generic plane $P\subset V_4$. Being generic $P$ is not isotropic and thus induces a decomposition $V_4=P\oplus P'$, and we deduce that the image of $\gamma_P=Lie(\Gamma_P)$ in $\fsp_4$ is a copy of $\fsl_2\times\fsl_2$. Moreover, the kernel of the restriction map to $\fsp_4$ is the subalgebra of $\fso_7$ that sends $\ell^\perp$ to $\ell$, and is easily checked to be a six-dimensional nilpotent subalgebra.  

Now, if $Q\in OG(3,\Delta_8\oplus \CC)$ is generic, its intersection with $\Delta_8$ is a generic point $P\in  OG(2,\Delta_8)$. Moreover, $Q$ is generated by $P$ and a vector $x+{\bf 1}$, for ${\bf 1}$ the generator of $\CC$ and $x$ a spinor in $P^\perp$, of norm $-1$, and uniquely defined up to $P$. The stabilizer $\Theta_Q$ of $Q$ in $Spin_7$ is thus the subgroup of the   stabilizer $\Gamma_P$ of $P$ that fixes $x$ modulo $P$. At the Lie algebra level, we have seen that $\gamma_P$ is an extension of  $\fsl(P)\times\fsl(P')$ by a $6$-dimensional nilpotent Lie algebra, for a decomposition $\Delta_4=P\oplus P'$ of the spin representation of $\fso_5$. But $\Delta_8$ restricted to $\fso_5$ is isomorphic to $\Delta_4\oplus \Delta_4$, and $P^\perp$ gets identified with $P'\oplus P'$. We deduce that the generic stabilizer in $\fsl(P)\oplus\fsl(P')$ is just $\fsl(P)$. Thus 
$\Theta_Q$ is an extension of  $\fsl(P)$ by a $6$-dimensional nilpotent Lie algebra. We have proved:

\begin{prop}
$OG(3,9)$ admits a quasi-homogeneous action of $Spin_7$, with generic stabilizer locally isomorphic to $SL_2\rtimes U_6$, where $U_6$ is unipotent.  
\end{prop}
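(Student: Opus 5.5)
The plan is a dimension count. I view $V_9$ as $\CC\oplus\Delta_8$ through the nonstandard embedding $Spin_7\subset Spin_8\subset Spin_9$ described above; this is triality applied to $V_9=\CC\oplus V_8$. I will compute the Lie algebra $\theta_Q$ of the stabilizer of a generic point $Q\in OG(3,9)$. I will then show $\dim\theta_Q=9$, so that the orbit has dimension $21-9=12$. Since $\dim OG(3,9)=3(18-9-1)/2=12$, the orbit is open, and this gives quasi-homogeneity.

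\emph{Step 1: reduction to planes in $\Delta_8$.} A generic isotropic $Q\subset\CC\oplus\Delta_8$ is not contained in $\Delta_8$. Hence $P=Q\cap\Delta_8$ is an isotropic plane, and genericity of $Q$ makes $P$ generic in $OG(2,\Delta_8)$. Choose $q\in Q\setminus P$ and normalize it to $x+\mathbf 1$. Isotropy of $Q$ then forces $x\in P^\perp$ with $(x,x)=-1$, and $x$ is determined modulo $P$. Since $Spin_7$ acts trivially on $\CC$, the stabilizer of $Q$ is exactly the subgroup of $\Gamma_P$ fixing the class of $x$ in $P^\perp/P$.

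\emph{Step 2: the stabilizer $\gamma_P$.} I will use the preceding Lemma, which identifies the blow-up of $OG(2,\Delta_8)$ along $OG(2,V_7)$ with a $\QQ^4$-bundle over $\QQ^5$. Since this blow-up is $Spin_7$-equivariant, a generic $P$ is determined by two pieces of data. The first is an isotropic line $\ell\subset V_7$. The second is a line in $OG(2,\ell^\perp/\ell)\simeq\PP^3=\PP(\Delta_4)$, which is the same as a plane in the spin representation $\Delta_4$ of $\fso_5\simeq\fsp_4$.

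Consequently $\gamma_P$ sits in the parabolic stabilizing $\ell$. Its Levi part contains $\fso(\ell^\perp/\ell)\simeq\fsp_4$, and the stabilizer there of a generic, hence non-isotropic, plane $P\subset V_4$ is $\fsl(P)\times\fsl(P')$, where $V_4=P\oplus P'$. The kernel of the map to $\fsp_4$ consists of the elements of $\fso_7$ sending $\ell^\perp$ to $\ell$. I will check directly that this kernel is a $6$-dimensional nilpotent subalgebra, and that the $\fgl(\ell)$ factor of the Levi is killed by genericity. This gives $\dim\gamma_P=6+6=12$, consistent with $21-12=9=\dim OG(2,\Delta_8)$.

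\emph{Step 3: the extra condition from $x$.} Restricted to $\fso_5$, the representation $\Delta_8$ is $\Delta_4\oplus\Delta_4$. Under this decomposition $P^\perp/P$ is identified with $P'\oplus P'$, on which $\fsl(P)$ acts trivially and $\fsl(P')$ acts diagonally. A generic vector of $P'\oplus P'$ is a pair of independent vectors of $P'$, so its stabilizer in $\fsl(P')$ is trivial. The stabilizer of $x$ in the reductive part is therefore $\fsl(P)$.

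\emph{Main obstacle.} The delicate point is the unipotent part. I must verify that the $6$-dimensional nilpotent radical of $\gamma_P$ preserves $x$ modulo $P$, or at least that the stabilizer of $x$ meets it in a $6$-dimensional piece. The approach I would try first is to compute explicitly in the matrix model of \cite{sk}, where the radical maps $\ell^\perp$ into $\ell$ and hence acts by moving vectors into $P$. If instead some radical directions are lost here, I would compensate by recounting the $\fgl(\ell)$ factor.

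Granting this verification, $\theta_Q$ is an extension of $\fsl_2$ by a $6$-dimensional nilpotent algebra, so $\dim\theta_Q=9$. The orbit of $Q$ is then $12$-dimensional and hence dense, and the generic stabilizer is locally isomorphic to $SL_2\rtimes U_6$.
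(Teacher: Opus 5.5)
You follow the paper's route step by step: reduce to $P=Q\cap\Delta_8$, use the $\QQ^4$-bundle over $\QQ^5$ and $\fso_5\simeq\fsp_4$, then fix $\bar x\in P^\perp/P\simeq P'\oplus P'$. However, Step 2 contains a false claim, and the verification you postpone as the main obstacle actually fails. The elements of $\fso_7$ mapping $\ell^\perp$ into $\ell$ form $\fgl(\ell)\ltimes\mathfrak{n}$, where $\mathfrak{n}\simeq\ell\otimes(\ell^\perp/\ell)$ is the $5$-dimensional nilradical of the parabolic subalgebra stabilizing $\ell$. This kernel is $6$-dimensional only because it contains $\fgl(\ell)$, and it is solvable but not nilpotent, since $\fgl(\ell)$ acts on $\mathfrak{n}$ by a nonzero character. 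Nor is $\fgl(\ell)$ killed by genericity. It acts by a scalar on the maximal isotropic subspace $A\subset\Delta_8$ spanned by the pure spinors of the $3$-planes through $\ell$, and $P\subset A$. Hence $\gamma_P=(\fgl(\ell)\times\fsl(P)\times\fsl(P'))\ltimes\mathfrak{n}$. Your two assertions are incompatible: without $\fgl(\ell)$ you would get $\dim\gamma_P=11$, not $12$.

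Now let $B$ be the complementary isotropic subspace, spanned by the pure spinors of the $3$-planes through an isotropic line $\ell'$ with $\ell\oplus\ell'$ non-degenerate. Then $P^\perp/P=A/P\oplus(P^\perp\cap B)$.

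\begin{itemize}
\item The generator of $\fgl(\ell)$ acts on these two summands by opposite nonzero scalars.
\item The $\fso_4$-invariant direction of $\mathfrak{n}$ (the root space of $\varepsilon_1$) maps the second summand isomorphically onto the first.
\end{itemize}

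So neither fixes $\bar x$, and $\theta_Q$ is not ``$\fsl(P)$ plus a radical''; one has to solve jointly inside $\gamma_P$. This is easy, because $\fsl(P')\cdot\bar x$ is already the whole $3$-dimensional tangent space at $\bar x$ of the affine quadric of norm $-1$ vectors in $P^\perp/P$. Hence every element of $\fgl(\ell)\oplus\fsl(P)\oplus\mathfrak{n}$ is corrected by a unique element of $\fsl(P')$, and $\dim\theta_Q=12-3=9$. Quasi-homogeneity therefore holds, but the generator of $\fgl(\ell)$ survives, corrected by a semisimple element of $\fsl(P')$.

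Explicitly, take the following choices:
\begin{itemize}
\item $v_{\pm\pm\pm}$ the weight vectors of $\Delta_8$ of weights $\frac12(\pm1,\pm1,\pm1)$, and $\ell$ the weight line of $\varepsilon_1$ in $V_7$;
\item $P=\langle v_{+++},v_{+--}\rangle$, which is non-isotropic for the symplectic form of $A\simeq\Delta_4$, hence generic;
\item $x=v_{++-}+v_{--+}$, scaled to have norm $-1$.
\end{itemize}
A root-by-root check shows that $\theta_Q$ is spanned by:
\begin{itemize}
\item the $2$-dimensional toral subalgebra $\{t_1+t_2=t_3\}$ of the Cartan subalgebra (coordinates $t_i$ dual to the $\varepsilon_i$);
\item the root vectors $X_{\pm(\varepsilon_2+\varepsilon_3)}$, $X_{\varepsilon_1\pm\varepsilon_2}$, $X_{\varepsilon_1\pm\varepsilon_3}$;
\item one combination $X_{\varepsilon_1}+cX_{\varepsilon_3-\varepsilon_2}$ with $c\neq 0$.
\end{itemize}
So $Q$ lies in the open orbit and $\theta_Q\simeq\fgl_2\ltimes\mathfrak{u}_5$. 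The generic stabilizer has rank two and is locally $(\GG_m\times SL_2)\ltimes U_5$, whose radical is not nilpotent; it is not $SL_2\rtimes U_6$ with $U_6$ unipotent.

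Your fallback of ``recounting the $\fgl(\ell)$ factor'' can repair the dimension, but not the structure. The paper's own argument contains the same slip: it calls the $6$-dimensional kernel nilpotent. Only the quasi-homogeneity part of the statement is actually established.
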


This quasi-homogeneity property also holds for the corresponding spinorial Fano manifold:



\begin{prop}
$(OG(3,9), \cS)$ is a quasi-homogeneous $Spin_7$-Fano tenfold of index four and Picard number one.
\end{prop}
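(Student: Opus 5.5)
The numerical invariants come first. $OG(3,9)$ has dimension $3(9-3)-6+\binom{3}{2}\cdot0$; explicitly $\dim OG(k,m)=k(m-k)-\binom{k+1}{2}$, which gives $18-6=12$. Its canonical bundle is $\cO(-(m-k-1))=\cO(-5)$ in terms of the ample generator $\cL=\cE_{\omega_3}$. The spinor bundle $\cS$ has rank $2^{4-3}\cdot 2=2^{n-k}$ with $n=4$, so rank $2$ (in the odd case the fiber is the spin representation of $U^\perp/U\simeq\CC^3$, of dimension $2$). By the formula $c_1(\cS)=\frac12(\mathrm{rank}\,\cS)\cL$ we get $c_1(\cS)=\cL$. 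Hence $X=(OG(3,9),\cS)$ has dimension $12-2=10$, and by adjunction $K_X=\cO_X(-5+1)=\cO_X(-4)$, so the index is four once we know $\cL_{|X}$ is primitive.

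For the Picard number we use the Koszul complex $0\to\wedge^2\cS^\vee\to\cS^\vee\to\cO\to\cO_X\to0$ together with Lefschetz-type arguments. Since $\cS$ is globally generated of rank $2<\dim$ and $X$ has dimension $10\ge 3$, $X$ is connected and $H^1(X,\cO_X)=0$. To get $\rho_X=1$ with $\cL_{|X}$ as generator, I would use the Sommese/Lefschetz theorem for zero loci of sections of ample bundles. $\cS$ is not ample, so instead I would compute $H^{1,1}(X)$ via the conormal sequence as outlined in the introduction, checking with Bott's theorem that $H^1(X,\Omega_{Y|X})=\CC$ and that the contributions of $\cS^\vee_{|X}$ vanish. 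Alternatively, $\rho_X=1$ follows from quasi-homogeneity plus the fact that the stabilizer computed below has no nontrivial characters beyond those coming from $OG(3,9)$; I would take the Bott computation as the primary route.

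The quasi-homogeneity is the main point. The idea is to identify $X$ with $OG(3,\CC\oplus\Delta_8)$ (or a closed subvariety of it) via the reduction described just before the statement. A generic spinor $\delta\in\Delta$ has stabilizer $Spin_7$, and $V_9$ restricts to $\CC\oplus\Delta_8$. By Lemma \ref{kills}, $[U]\in X$ if and only if $\omega_U.\delta=0$. I would exhibit an explicit isotropic $3$-plane $U$, chosen generic in the sense of the previous proposition (meeting the $\Delta_8$-hyperplane in a generic plane $P$ and containing a vector $x+\mathbf 1$), and verify $\omega_U.\delta=0$ with $\delta=1+e_{1234}$ and the stabilizer matrices of \cite{sk}. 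Its stabilizer is then $\Theta_Q$, of dimension $3+6=9$, so its orbit has dimension $21-9=12$. That would make the orbit open in $OG(3,9)$, which is impossible for $X$ of dimension $10$. So the generic points of $OG(3,9)$ are not in $X$, and I must instead find $U\in X$ whose stabilizer has dimension $11$. This step is the main obstacle: locating a sufficiently general point of $X$.

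My plan there is to parametrize $U=P\oplus\langle x+\mathbf 1\rangle$ and impose the condition $\omega_U.\delta=0$. The point is that $\delta$ spans the trivial summand $\CC\subset V_8$ relative to $Spin_7$, so the condition becomes a constraint on how $U$ meets the special line, cutting $\dim 12\to 10$. I would then compute the stabilizer of such a $U$ as an extension of a subgroup of $\fsl(P)\times\fsl(P')$ by the unipotent part, and check that its dimension is $11$. An orbit of dimension $10$ in the irreducible $10$-dimensional $X$ is dense, which proves the statement.
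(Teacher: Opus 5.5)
\textbf{The quasi-homogeneity step has a genuine gap, and the plan you sketch for it cannot work.} Your numerical part is fine: $\dim X=12-2=10$, $c_1(\cS)=\cL$, and $K_X=\cO_X(-4)$. The rank is $2^{n-k}=2$, not ``$2^{4-3}\cdot 2$''. Your Bott route to $\rho_X=1$ is the same computation the paper uses for its Betti numbers.

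Quasi-homogeneity is the whole content of the paper's proof. The paper exhibits an explicit point of $X$: the plane $\langle e_1-f_2,e_3+f_4+e_5,e_4-f_5\rangle$ inside the hyperplane $(e_3+e_5-f_5)^\perp$ of $V_{10}$. It then finds that this point has an $11$-dimensional stabilizer. You produce no point. Your plan describes the stabilizer of $U=P\oplus\langle x+\mathbf 1\rangle$ as $\fsl(P)\times\fsl(P')$ extended by a $6$-dimensional nilpotent algebra, but that is the stabilizer of a \emph{generic} $P\in OG(2,\Delta_8)$, and points of $X$ never have generic $P$.

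To see this, take $P=U\cap\mathbf 1^\perp$ and $x\in P^\perp\cap\mathbf 1^\perp$. Since $x+\mathbf 1$ is orthogonal to $p_1,p_2$, it commutes with $p_1p_2$ in the Clifford algebra, so
\[
\omega_U\cdot\delta=x\cdot(p_1p_2\cdot\delta)+\mathbf 1\cdot(p_1p_2\cdot\delta).
\]
Here $\delta$ lies in one half-spin summand $\Delta_+$ for $Spin(\mathbf 1^\perp)$. The vector $x$ maps $\Delta_+$ to $\Delta_-$, while $\mathbf 1$ acts on $\Delta_+$ by a nonzero scalar. Hence $\omega_U\cdot\delta=0$ if and only if $p_1p_2\cdot\delta=0$. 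This means $P\in(OG(2,8),\cS_+)\simeq OG(2,7)$, which is the \emph{closed} $Spin_7$-orbit in $OG(2,\Delta_8)$, i.e. the centre of the blow-up in the Lemma.

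So with generic $P$ and $q(x)\neq 0$, your recipe can only return the $9$-dimensional $\Theta_Q$, never $11$. Your heuristic is also misaimed. The constraint is not about how $U$ meets $\CC\mathbf 1$, since that intersection is always zero. Moreover, the $V_8$ in which $\delta$ spans the invariant line is the triality image of $\Delta_+$, not the hyperplane $\mathbf 1^\perp\subset V_9$ containing $P$.

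\textbf{How to close the argument.} Once $P$ is in the closed orbit, the count is quick.
\begin{enumerate}
\item $\Gamma_P$ is the parabolic $P_2\subset Spin_7$, of dimension $14$.
\item Grade $\Delta_8$ by the centre of the Levi factor $GL_2\times Spin_3$. Then $P$ is the top piece and $P^\perp/P\simeq\CC^2\otimes\CC^2$ is the middle piece.
\item Consequently the unipotent radical acts trivially on $P^\perp/P$. The Levi factor acts through $SO_4$, transitively on the $3$-dimensional affine quadric $\{q(x)=-q(\mathbf 1)\}$.
\item The stabilizer of $U$ is the stabilizer of $x \bmod P$ in $\Gamma_P$, which has dimension $14-3=11$.
\end{enumerate}
So the orbit of $U$ has dimension $10$ and is dense in $X$; it fibres over $OG(2,7)$ with fibre $SO_4/SO_3$. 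An argument of this kind, or an explicit computation as in the paper, is needed.

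\textbf{Minor point on connectedness.} Global generation with rank below the dimension does not imply connectedness: the paper's $(OG(2,14),\cS_+)$ and $(OG(2,13),\cS)$ are disconnected. You should get $h^0(\cO_X)=1$ from the Koszul complex, since $\cO(-1)$ and $\cS^\vee\simeq\cE_{\omega_4-\omega_3}$ are acyclic by Bott.
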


\noindent {\it Betti numbers}: $(1,1,2,3,3,4,3,3,2,1,1)$.

\proof
In order to prove that the action of $Spin_7$ is quasi-homogeneous, we compute the dimension of the generic stabilizer. For this it is convenient to upgrade to dimension $10$ and use the computation, required for 
$(OG(3,10), \cS_+)$, of the stabilizer of the $3$-plane $P=\langle e_1-f_2, e_3+f_4+e_5, e_4-f_5\rangle$. We deduce the stabilizer in $Spin_9$ by asking that a generic hyperplane $H=V_9$ containing $P$ is fixed. We chose $H=(e_3+e_5-f_5)^\perp$ and obtained the required dimension $11$ for the stabilizer.
\qed 

\begin{prop}\label{rigidity}
$(OG(3,9), \cS)$ is locally rigid.
\end{prop}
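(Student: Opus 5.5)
We follow the strategy described in the introduction and verify conditions (\ref{c1}), (\ref{c11}) and (\ref{c2}) for $X=(OG(3,9),\cS)$, with $Y=OG(3,9)=Spin_9/P_3$. Since $\cS$ has rank $2^{4-3-1}\cdot 2=2$ (the spin representation of $\fso(U^\perp/U)\simeq\fso_3$), the zero locus of a general section has codimension two. It is resolved by the Koszul complex
$$0\lra \det\cS^\vee\lra \cS^\vee\lra \cO_Y\lra \cO_X\lra 0,$$
with $\det\cS^\vee=\cL^{-1}$, where $\cL=\cE_{\omega_3}$ by the formula $c_1(\cS)=\frac{1}{2}(\mathrm{rank}\,\cS)\cL$. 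Once conditions (\ref{c1}), (\ref{c11}) and (\ref{c2}) are established, the surjectivity of $\fg=\fso_9\to\Delta/\CC s$ follows from the prehomogeneity of $\PP(\Delta)$ (Table 1, stabilizer $Spin_7$, of dimension $21=36-15$). The exact sequence (\ref{long}) then gives $H^1(X,TX)=0$.

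For (\ref{c2}) we twist the Koszul complex by $\cS=\cE_{\omega_4}$. The terms are $\cS\otimes\cS^\vee=\cO\oplus\mathcal{E}nd_0(\cS)$ and $\cS\otimes\cL^{-1}\simeq\cS^\vee$. Here $\mathcal{E}nd_0\cS$ is the irreducible bundle with highest weight $2\omega_4-\omega_3$, the adjoint of the $\fsl_2$ factor of the Levi. We check with Bott's theorem that $\cS^\vee$ and $\mathcal{E}nd_0\cS$ are acyclic, and that $\cO$ contributes only $H^0=\CC$. Chasing the complex then gives $H^0(X,\cS_{|X})=\Delta/\CC s$ and $H^1(X,\cS_{|X})=0$.

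For (\ref{c1}) and (\ref{c11}) we twist by $T=TY$, using the filtration $0\to Hom(\cU,\cU^\perp/\cU)\to T\to\wedge^2\cU^\vee\to 0$, exactly as in the $OG(3,8)$ case. On $OG(3,9)$ we have $\cU^\perp/\cU\simeq S^2\cS$ (as $\fso_3\simeq\fsl_2$), up to a twist by a line bundle. So $T$ has the two irreducible pieces $\wedge^2\cU^\vee$ and $\cU^\vee\otimes\cU^\perp/\cU$; the latter may itself split, since $\cU^\vee$ is irreducible under the Levi $GL_3\times Spin_3$. We need every piece tensored with $\cS^\vee$ and with $\cL^{-1}$ to be acyclic. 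Concretely, this means running Bott's algorithm on the weights
$$\lambda+\rho,\quad \lambda-\omega_4+\rho,\quad \lambda+\omega_4-\omega_3+\rho,\quad \lambda-\omega_3+\rho$$
for each $\lambda$ among the highest weights of the irreducible factors of $T$. Here the tensor products with $\cS^\vee$ are decomposed by Clebsch–Gordan in the $\fsl_2$ factor.

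The main obstacle is this list of Bott computations. The tensor products produce several weights, some of which lie close to walls, and an individual summand could have nonzero cohomology. If some twist, say a summand of $T\otimes\cS^\vee$, turns out to have nonzero $H^1$ or $H^2$, we will not conclude directly. Instead we will track the connecting maps in the spectral sequence of the twisted Koszul complex. The alternative is to show that the offending groups sit in degrees that cannot reach $H^0$ or $H^1$ of $T_{|X}$: a class in $H^q$ of the $\cS^\vee$-term contributes only to $H^{q-1}(X,T_{|X})$, and a class in $H^q$ of the $\cL^{-1}$-term only to $H^{q-2}(X,T_{|X})$. So it suffices to exclude $H^1,H^2$ of the former and $H^2,H^3$ of the latter. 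If all these groups vanish, (\ref{c1}) and (\ref{c11}) hold, and local rigidity follows.
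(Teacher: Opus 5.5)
Your route is exactly the paper's: the Koszul complex $0\to\cO(-1)\to\cS^\vee\to\cO\to\cO_X\to 0$, the two graded pieces $\wedge^2\cU^\vee=\cE_{\omega_2}$ and $Hom(\cU,\cU^\perp/\cU)$ of the tangent bundle, and Bott's theorem. Your treatment of (\ref{c2}) is correct and matches the paper. As written, however, the argument is not finished. The only step with real content, the Bott check for the tangent-bundle twists, is left open behind a contingency plan. That check is short and comes out cleanly, so no connecting maps ever need to be tracked.

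One correction first: $\cU^\vee\otimes\cU^\perp/\cU$ does not split. It is an outer tensor product of irreducible representations of the two Levi factors, namely $\cE_{\omega_1-\omega_3+2\omega_4}$, with highest weight $\epsilon_1+\epsilon_4$. Only its twist by $\cS^\vee$ splits, by Clebsch--Gordan, into $\cE_{\omega_1-2\omega_3+3\omega_4}\oplus\cE_{\omega_1-\omega_3+\omega_4}$.

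Now work in $\epsilon$-coordinates, with $\rho=(\frac72,\frac52,\frac32,\frac12)$ and $\omega_4=\frac12(1,1,1,1)$. A weight $\mu+\rho$ is singular as soon as two entries agree up to sign or one entry vanishes. The relevant values of $\mu+\rho$ are:
\begin{itemize}
\item $(4,3,1,1)$ for $\cE_{\omega_2}\otimes\cS^\vee=\cE_{\omega_2-\omega_3+\omega_4}$;
\item $(\frac72,\frac52,\frac12,\frac12)$ for $\cE_{\omega_2}(-1)$;
\item $(\frac92,\frac52,\frac32,\frac32)$ for $\cE_{\omega_1-\omega_3+2\omega_4}$ itself;
\item $(4,2,1,2)$ and $(4,2,1,1)$ for the two summands of its twist by $\cS^\vee$;
\item $(\frac72,\frac32,\frac12,\frac32)$ for its twist by $\cO(-1)$.
\end{itemize}
All of these are singular. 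Hence the only nonzero group in the whole twisted complex is $H^0(\cE_{\omega_2})=\fso_9$, with $\omega_2+\rho=(\frac92,\frac72,\frac32,\frac12)$ regular dominant. This gives (\ref{c1}) and (\ref{c11}) as isomorphisms, and your conclusion follows.

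A side remark on your fallback criterion in the last paragraph. Excluding only $H^1,H^2$ of the $\cS^\vee$-term and $H^2,H^3$ of the $\cO(-1)$-term gives surjectivity of $\fso_9\to H^0(X,T_{|X})$ and $H^1(X,T_{|X})=0$. That is indeed enough for $H^1(X,TX)=0$. It is not enough for (\ref{c1}) as an isomorphism, which would also need $H^0$ of the $\cS^\vee$-term and $H^1$ of the $\cO(-1)$-term to vanish. Those do vanish here as well, by the computation above.
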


\proof Let $X$ be the zero locus of a general section $s \in H^0(OG(3,9),\cS)$. For simplicity, set $OG := OG(3,9)$. The normal exact sequence is
$$0\lra TX\lra T(OG)_{|X}\lra \cS_{|X}\lra 0.$$
In order to verify conditions (\ref{c1}), (\ref{c11}) and (\ref{c2}), we compute the Koszul complex associated to the section $s$. Since $\cS$ has rank $2$ and determinant $ \cO_{OG}(1)$, it takes the following form:
\[
0\lra \cO_{OG}(-1) \lra  \cS^\vee \lra \cO_{OG}\lra \cO_X \lra 0.
\]
The tangent bundle of $OG$ can be represented as an extension 
\[
0\lra Hom(\cU,\cU^\perp/\cU)\lra T(OG)\lra \bigwedge^2\cU^\vee\lra 0.
\]
In terms of irreducible vector bundles, we have $\cU^\vee = \cE_{\omega_1}$, $\bigwedge^2\cU^\vee = \cE_{\omega_2}$ and $Hom(\cU,\cU^\perp/\cU) = \cU^\vee\otimes \mathrm{Sym}^2 \cS(-1) = \cE_{\omega_1 -\omega_3 + 2 \omega_4}$.

$$\dynkin[labels={\cU^\vee=\cE_{\omega_1}, , , \cS=\cE_{\omega_4}}, edge length = 1cm] B{oo*o}$$

Therefore, to compute the cohomology of $T(OG)_{|X}$, we need to tensor the Koszul complex by $T(OG)$. In order to compute the cohomology of each term, we can consider the twisted Koszul complex by $\bigwedge^2\cU^\vee$ and by $\cU^\vee\otimes \mathrm{Sym}^2 \cS(-1)$. For $\bigwedge^2\cU^\vee$, we obtain the following list of homogeneous vector bundles, all irreducible:
\begin{itemize}
    \item $\bigwedge^2\cU^\vee(-1) = \cE_{\omega_2-\omega_3}$;
    \item $\bigwedge^2\cU^\vee\otimes \cS^\vee = \cE_{\omega_2-\omega_3+\omega_4}$;
    \item $\bigwedge^2\cU^\vee =\cE_{\omega_2} $.
\end{itemize}
Bott's theorem implies that the only non-vanishing cohomology is $$H^0\left(OG,\bigwedge^2\cU^\vee\right) = \bigwedge^2V_{9}^\vee.$$

On the other hand, using Bott's theorem again, we observe that every term in the Koszul complex twisted by $\cU^\vee\otimes \mathrm{Sym}^2 \cS(-1)$ is acyclic. We can thus deduce the cohomology of the terms in the Koszul complex twisted by $T(OG)$. From the previous discussion, the only term with non-trivial cohomology is $T(OG)$ in degree zero: 
\[
H^0(X,T(OG)_{|X}) \simeq \bigwedge^2V_{9}^\vee \simeq H^0(OG, T(OG)).
\]
We can conclude that conditions (\ref{c1}) and (\ref{c11}) are satisfied.
For condition (\ref{c2}), we twist the Koszul complex by $\cS = \cE_{\omega_4}$. In this case, the only non-vanishing cohomologies are:
\begin{itemize}
\item $H^0\left(OG,\cS\otimes \cS^\vee\right) = \mathbb{C}$;
\item $H^0\left(OG,\cS\right) = \Delta$.
\end{itemize}
So from the twisted Koszul complex we deduce the following exact sequence:
\[
0 \lra \mathbb{C}s \lra \Delta \lra H^0\left(X,\cS_{|X}\right) \lra 0,
\]
which means that condition (\ref{c2}) does also hold.
Arguing as explained in Section \ref{locRig}, we conclude that $H^1(X,TX) = 0$.
\qed

\medskip \noindent {\it Remark}. As we already mentioned in the introduction, the proof of local rigidity follows these exact same steps in all other cases except for Propositions \ref{rigidityOG311} and \ref{rigidityOG312}, where a more careful analysis is required.

\begin{prop}$OG(2,10)$ and $(OG(2,10), \cS_+)$ both admit quasi-homogeneous actions of $(\mathbb{G}_m\times Spin_7)\rtimes \Delta_8$. 
\end{prop}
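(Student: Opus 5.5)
The plan is to treat the two varieties together, exploiting the relation $(OG(2,10),\cS_+)\subset OG(2,10)$ and the structure of the generic stabilizer of $Spin_{10}$ on $\PP(\Delta_+)$. By Table 1, the stabilizer of a generic $[\delta]\in\PP(\Delta_+)$ is $(\CC^*\times Spin_7)\rtimes\Delta$, with Lie algebra $(\CC\oplus\fso_7)\ltimes\Delta_8$. Here $\Delta_8$ is the spin representation of $\fso_7$, sitting as an abelian nilradical. I will take the explicit generic spinor $\delta=1+e_{1234}$ (or a variant adapted to $\Delta_+$ of $Spin_{10}$) and use the matrices from \cite{sk}. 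Geometrically, $\delta$ determines a decomposition $V_{10}=V_2\oplus V_8$: the quadratic map $S^2\Delta_+\to V_{10}$ vanishes, but the pairing with $\Delta_-$ shows that $\delta$ is the sum of two pure spinors. These correspond to two maximal isotropic subspaces $E,E'$ with $E\cap E'=0$ inside a $9$-dimensional configuration. The $\CC^*$ rescales the two pure components oppositely, $Spin_7$ acts on an $8$-dimensional orthogonal piece through the spin representation (triality, as for $Spin_8$), and $\Delta_8$ is the unipotent part.

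For $OG(2,10)$ itself, of dimension $13$, the group has dimension $1+21+8=30$. I will exhibit an explicit isotropic plane $P$ that is generic with respect to $E$ and $E'$, for instance spanned by one vector mixing the two isotropic directions and one vector transverse to both. I will then compute its stabilizer at the Lie algebra level in the matrix model of \cite{sk}, expecting codimension exactly $13$, that is, a stabilizer of dimension $17$. A heuristic guide is that $\Delta_8$ together with $\CC^*$ moves the plane to a normal form, after which the residual $Spin_7$ acts on the relevant part of $OG(2,8)$ through triality. This reduces the question to the quasi-homogeneity of $Spin_7$ on suitable flags of $\Delta_8$, which is in the spirit of the earlier $OG(2,\Delta_8)$ lemma, where the stabilizer is an extension of $\fsl_2\times\fsl_2$ by a $6$-dimensional nilpotent algebra.

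For $(OG(2,10),\cS_+)$, which is $9$-dimensional, I will look for a plane $P$ with $\omega_P.\delta=0$ (Lemma \ref{kills}), for example $P=\langle e_1,f_2+\ldots\rangle$, chosen so that it is not special relative to the splitting. Then I will check that its stabilizer in the $30$-dimensional group has dimension $21$, so that the orbit is $9$-dimensional and hence dense. Since the variety is irreducible, being smooth of expected dimension with Picard number one, density follows.

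The main obstacle is choosing the points so that they are genuinely generic: a naive choice tends to land in a smaller orbit, for example one that is isotropic with respect to $E$ or $E'$, and the stabilizer then comes out too large. I would first guess candidates using weight considerations for the torus $\CC^*\times T_{Spin_7}$, requiring that the weights of $\omega_P$ be as spread out as possible. I would then verify the stabilizer dimension by a direct linear algebra computation, if necessary with computer help, as is done for the other cases in the paper.
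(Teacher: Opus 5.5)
Your method is the paper's: take the $30$-dimensional stabilizer $\fs$ of $[\delta]$, $\delta=1+e_{1234}$, from \cite{sk}, exhibit one plane in each variety, and check that its stabilizer has codimension $13$, resp.\ $9$, in $\fs$. The paper uses the planes $\langle e_5,f_1\rangle$ and $\langle e_1,f_4\rangle$.

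The gap is that you never produce these points, which are the only real input. Moreover, the structural picture you intend to guide the search is wrong:
\begin{itemize}
\item The quadratic map $q:\Delta_+\to V_{10}$ does not vanish at $\delta$; it vanishes exactly on pure spinors. Here $q(\delta)$ spans the isotropic line $\ell=\langle f_5\rangle$.
\item The maximal isotropic spaces $\langle f_1,\dots,f_5\rangle$ and $\langle e_1,\dots,e_4,f_5\rangle$ of the pure components $1$ and $e_{1234}$ meet in $\ell$, not in $0$. Two maximal isotropic subspaces of $V_{10}$ from the same family always meet in odd dimension.
\item The stabilizer is $(\CC^*\times Spin_7)\ltimes U$ inside the parabolic subgroup of $\ell$. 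Here $U\simeq \ell\otimes\ell^\perp/\ell$ is the full unipotent radical (each $f_5\wedge w$, $w\in V_8=\langle e_1,\dots,e_4,f_1,\dots,f_4\rangle$, kills $\delta$), and $Spin_7$ is the stabilizer of $\delta_8=1+e_{1234}$ in $Spin(V_8)$.
\item The $\CC^*$ comes from $e_5\wedge f_5$. It multiplies $1$ and $e_{1234}$ by the \emph{same} scalar, as it must in order to fix $[\delta]$, not oppositely.
\end{itemize}
So the invariant structure is the flag $\ell\subset\ell^\perp$, not a nondegenerate splitting $V_2\oplus V_8$.

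This flag is what locates the generic points. Write $\langle q(\delta),v\rangle=\beta(\delta,v\cdot\delta)$, with $\beta:\Delta_+\times\Delta_-\to\CC$ the invariant pairing. If $P=\langle u_1,u_2\rangle$ kills $\delta$, i.e.\ $\delta=u_1\delta_1+u_2\delta_2$, then $u_i^2=0$, $u_1u_2=-u_2u_1$ and $\beta(vx,y)=\pm\beta(x,vy)$ give $\beta(\delta,u_i\delta)=0$. Hence $P\subset\ell^\perp$, and a plane chosen \emph{not special relative to the splitting} can never lie in $X=(OG(2,10),\cS_+)$.

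With the correct picture, the second case needs no computation:
\begin{itemize}
\item Since $f_5\delta=0$ and $f_5$ anticommutes with $V_8$, a plane $P\subset\ell^\perp$ with $\ell\not\subset P$ lies in $X$ if and only if its image $\bar P\subset V_8$ lies in $(OG(2,8),\cS_+)\simeq OG(2,7)$.
\item $Spin_7$ is transitive on $(OG(2,8),\cS_+)$ by triality, and $U$ acts transitively on the lifts $\mathrm{Hom}(\bar P,\ell)\simeq\CC^2$.
\item This gives a $9$-dimensional orbit. Its complement $\{\ell\subset P\}$ in $X$ has dimension at most $6$.
\end{itemize}
This also proves that $X$ is irreducible, which your appeal to ``Picard number one'' presupposes rather than proves (zero loci of spinor sections can be disconnected, e.g.\ $(OG(2,14),\cS_+)$).

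For $OG(2,10)$ itself, the planes inside $\ell^\perp$ form a closed invariant subvariety of dimension $11$. So the open orbit must be sought among planes transverse to $\ell^\perp$, such as $\langle e_5,f_1\rangle$. Your sketched reduction to $Spin_7$ acting on $OG(2,8)$ belongs to the $\cS_+$ case, not to this one.
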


\noindent {\it Betti  numbers:} $(1,1,2,3,3,3,3,2,1,1)$. 
\proof According to \cite[Proposition 31]{sk}, the generic stabilizer of the action of $\CC^*\times Spin_{10}$ on the $16$-dimensional half-spin representation $\Delta_+$ is locally isomorphic to $(\CC^*\times Spin_7)\rtimes \Delta_8$.  We use the explicit description of this generic stabilizer, or at least its Lie algebra $\fs$, as the stabilizer of the spinor $\delta=1+e_1e_2e_3e_4$, to compute the Lie algebra $\fs'$ (resp. $\fs''$) of the subgroup that also stabilizes the orthogonal plane $\langle e_5,f_1\rangle$ (resp. $\langle e_1,f_4\rangle$, that belongs to $(OG(2,10), \cS_+)$). The result of the computation is that $\fs'$ (resp. $\fs''$) 
has codimension $13$ (resp. $9$) in $\fs$, hence an open orbit.  
\qed 

\medskip \noindent {\it Remark.}
Pushing the 
computation one step further, we check that the generic stabilizer of the first action is locally isomorphic to $GL_3\rtimes U_7$, with $U_7$ unipotent of dimension $7$. Similarly, the generic stabilizer of the second action is locally isomorphic to  $(SL_2\times {\mathbb G}_m^2)\rtimes U_{15}$,
with $U_{15}$ unipotent of dimension $15$.

\begin{prop}
$(OG(2,10), \cS_+\oplus\cS_+)=G_2/P_2$
\end{prop}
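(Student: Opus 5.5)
I will argue exactly as in Proposition \ref{G2P2}: numerics by adjunction, a $G_2$-action from the generic stabilizer of a pencil of spinors, and a minimality argument for $G_2$-varieties.

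\textbf{Numerics.} The variety $OG(2,10)$ has dimension $2(10-2)-3=13$. The bundle $\cS_+$ has rank $2^{5-2-1}=4$ and $c_1(\cS_+)=2\cL$. Hence $X=(OG(2,10),\cS_+\oplus\cS_+)$ has dimension $13-8=5$. The index of $OG(2,10)$ is $10-2-1=7$, so by adjunction $X$ has index $7-4=3$. By the Koszul complex (Lefschetz-type vanishing via Bott's theorem), $X$ is connected with Picard number one. These are exactly the invariants of $G_2/P_2$: dimension five and index three.

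\textbf{Symmetry.} The pair of sections spans a generic plane $\langle\delta_1,\delta_2\rangle\subset\Delta_+$, and I need its stabilizer in $Spin_{10}$. Start from the stabilizer of $\delta_1=1+e_{1234}$, which is $(\CC^*\times Spin_7)\rtimes\Delta_8$ by \cite{sk}. Then $\delta_2$ gives an element of $\Delta_+/\CC\delta_1$, on which this group acts. Restricted to $Spin_7$, the quotient $\Delta_+/\CC\delta_1$ decomposes as $\CC\oplus V_7\oplus\Delta_8$, the last summand being moved by the unipotent radical $\Delta_8$.

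Using the unipotent part and the torus, a generic $\delta_2$ can be normalized modulo $\delta_1$ to a vector with a non-isotropic component in $V_7$. Its stabilizer in $Spin_7$ is $G_2$. Dimension count: $\dim G(2,16)=28$, and $\dim Spin_{10}-28=45-28=17$. A subgroup $G_2$ of dimension $14$, plus a $\GG_m$ acting on the pencil (and possibly an $SL_2$-like piece), is consistent with this. For the argument I only need that the stabilizer contains $G_2$. It acts on $X$, and nontrivially: a finite kernel is impossible since $G_2$ is simple and acts faithfully on $V_{10}$.

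\textbf{Identification.} The obstacle is to make this identification of $G_2$ rigorous, and not merely a dimension count. Concretely, I would exhibit an explicit $\delta_2$, for instance $v.\delta_1'$ with $\delta_1'$ built so that a copy of $\fg_2\subset\fso_7\subset\fso_8\subset\fso_{10}$ kills both spinors. Such a copy can be obtained by combining Table 1 for $m=7,8$: the $G_2$ fixing a spinor and a vector. I would then verify that the plane is generic, e.g. by checking that the orbit has dimension 28, which can be done at the Lie algebra level.

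Given a nontrivial $G_2$-action on the smooth projective fivefold $X$, the orbits of $G_2$ have dimension at least five unless there are fixed points or small orbits. The minimal dimension of a nontrivial projective $G_2$-homogeneous space is five, attained by $G_2/P_1=\QQ^5$ and $G_2/P_2$.

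To exclude lower-dimensional orbits and to get a closed orbit of dimension five, I argue as follows. $X$ has a closed orbit $G_2/P$, and if it is not a point it has dimension at least five, hence equals $X$. A $G_2$-fixed point $[U]$ would be a $G_2$-stable isotropic plane in $V_{10}$. But $V_{10}$ restricted to $G_2$ is $V_7\oplus\CC^3$, where $\CC^3$ is a non-degenerate trivial summand. So a fixed point would be an isotropic plane in $\CC^3$, which does not exist, since isotropic subspaces of a non-degenerate 3-space have dimension at most one. Hence $X$ is homogeneous, and being of index three, $X\simeq G_2/P_2$ rather than $\QQ^5$.
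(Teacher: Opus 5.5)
Your overall structure is the paper's. Adjunction gives a fivefold of index $3$. A copy of $G_2$ in the stabilizer of a generic pencil $\langle\delta_1,\delta_2\rangle\subset\Delta_+$ acts on $X$. A homogeneity argument plus the index then singles out $G_2/P_2$. Your last step is more careful than the paper's appeal to Proposition~\ref{G2P2}. Excluding $G_2$-fixed points via $V_{10}|_{G_2}\simeq V_7\oplus\CC^3$ works for any embedding $G_2\subset Spin_{10}$, since $\CC$ and $V_7$ are the only irreducible $G_2$-modules of dimension at most $10$.

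The genuine gap is the step you yourself call the obstacle. You never establish that the stabilizer of a generic pencil contains $G_2$, and the heuristic you offer is wrong. Write $\Delta_+=\Delta_+^{(8)}\oplus\Delta_-^{(8)}$ for the restriction to the Levi factor $\CC^*\times Spin_8$ of the stabilizer of $\langle f_5\rangle$, with $\delta_1\in\Delta_+^{(8)}=\wedge^{even}\langle e_1,\dots,e_4\rangle$.
\begin{itemize}
\item As a $Spin_7$-module, $\Delta_+/\CC\delta_1\simeq V_7\oplus\Delta_8$. This has dimension $15$; there is no extra trivial summand.
\item The stabilizer in $Spin_7$ of a non-isotropic vector of $V_7$ is $Spin_6\simeq SL_4$, not $G_2$.
\item The unipotent radical $U\simeq\Delta_8$ works the other way round from what you suggest. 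It kills $\Delta_+^{(8)}=\CC\delta_1\oplus V_7$, and it maps $\Delta_-^{(8)}$ into $\Delta_+^{(8)}$ by Clifford multiplication $V_8\otimes\Delta_-^{(8)}\to\Delta_+^{(8)}$. For non-null $w\in\Delta_-^{(8)}$, the map $x\mapsto x.w$ is an isomorphism.
\end{itemize}
Hence a generic pencil is conjugate to $\langle\delta_1,w\rangle$ with $w\in\Delta_-^{(8)}$ non-null and \emph{no} $V_7$-component. Since $\Delta_-^{(8)}$ restricts to $\Delta_8$, it is $\mathrm{Stab}_{Spin_7}(w)\simeq G_2$ (Table 1, $m=7$) that fixes both spinors. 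With this correction your symmetry step becomes a proof. The paper instead simply quotes \cite[Proposition 31]{sk}: the generic stabilizer of $GL_2\times Spin_{10}$ on $\CC^2\otimes\Delta_+$ is locally isomorphic to $SL_2\times G_2$.

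A second, smaller point: your conclusion needs $X$ to be connected. Otherwise the closed $5$-dimensional orbit is only one component, and you only get a disjoint union of copies of $G_2/P_2$. You merely assert $h^0(\cO_X)=1$ ``by the Koszul complex''. This is not automatic for spinor bundles, as the disconnected zero loci $(OG(2,14),\cS_+)$ and $(OG(2,13),\cS)$ show. The Bott computation therefore has to be carried out, as the paper does for $(OG(2,10),\cS_+\oplus\cS_-)$.

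Finally, your sentence about the kernel of the action is garbled but harmless. The absence of $G_2$-fixed points already shows that $G_2$ acts nontrivially.
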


\proof According to \cite[Proposition 31]{sk}, the generic stabilizer of the action of $GL_2\times Spin_{10}$ on  $\CC^2\otimes\Delta_+$ is locally isomorphic to $SL_2\times G_2$. Therefore, the generic stabilizer of $Spin_{10}$ acting on $G(2,\Delta_+)$ contains a copy of $G_2$. This implies that $(OG(2,10), \cS_+\oplus\cS_+)$ is a Fano fivefold of index three with an action of $G_2$. The classification of Mukai varieties (or, equivalently, the argument in the proof of Proposition \ref{G2P2}) ensures that this is $G_2/P_2$. \qed 

\begin{prop}\label{fl134}
$(OG(2,10), \cS_+\oplus\cS_-)=Fl_4$
\end{prop}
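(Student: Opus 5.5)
The plan is to imitate the proofs of Propositions \ref{G2P2} and \ref{OG27}. First I identify a large reductive group $H$ acting on $X=(OG(2,10),\cS_+\oplus\cS_-)$. Then I exhibit the adjoint variety of $H$ inside $X$, and conclude by a dimension count.

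\emph{Numerical check.} $OG(2,10)$ has dimension $13$ and index $7$. Each $\cS_\pm$ has rank $4$ with $c_1(\cS_\pm)=2\cL$. So $X$, the zero locus of a general section of $\cS_+\oplus\cS_-$, is smooth of dimension $13-8=5$, with $K_X=-3\cL_{|X}$. This matches $Fl_4\subset\PP(\fsl_4)$, which has dimension $5$ and anticanonical class $\cO(3,3)=3\cL$. I would also show that $X$ is connected, from the Koszul complex and Bott's theorem: $h^0(\cO_X)=1$ amounts to $\cS_+^\vee$, $\cS_-^\vee$, $\wedge^2(\cS_+^\vee\oplus\cS_-^\vee)$, etc.\ being acyclic on $OG(2,10)$.

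\emph{The group.} A pair $(\delta_+,\delta_-)\in\Delta_+\times\Delta_-$ up to scalars lives in a $30$-dimensional space, while $\dim Spin_{10}=45$. So the stabilizer of a generic pair $([\delta_+],[\delta_-])$ has dimension at least $15=\dim SL_4$, and I expect equality with stabilizer locally $SL_4$. To identify it, use the equivariant pairing $\Delta_+\otimes\Delta_-\to V_{10}$ (Clifford multiplication). A generic pair gives a non-isotropic vector $v$, hence a splitting $V_{10}=\CC v\oplus V_9$, and the stabilizer lies in $Spin_9$. There $\Delta_\pm$ both restrict to the spin representation $\Delta$, and by Lemma \ref{orbitsPDelta} I may take $\delta_+$ generic, with stabilizer $Spin_7$. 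Restricting $\Delta$ to $Spin_7$ gives $\CC\oplus V_7\oplus\Delta_8$ (as in the discussion before the $OG(3,9)$ lemma). Decomposing $\delta_-$ accordingly, I expect the common stabilizer to be the $Spin_6\simeq SL_4$ fixing a non-isotropic vector in $V_7$, with the allowed rescalings absorbing the $\Delta_8$-component. The alternative, more concrete route is to take the explicit spinors of \cite{sk} ($\delta_+=1+e_{1234}$ and a suitable $\delta_-$) and compute the common stabilizer at the Lie algebra level, checking that it is $\fsl_4$, for instance $\fsl_4\subset\fso_6\subset\fso_{10}$ in a non-standard position.

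\emph{Containment and conclusion.} Inside $\PP(\fso_{10})$, the variety $OG(2,10)$ is the closed orbit. I would find an isotropic plane $P$ whose Plücker vector $\omega_P=u_1\wedge u_2$ spans a highest root space of this $\fsl_4$, so that $P$ is a fixed point of a Borel subgroup of $SL_4$. By Lemma \ref{kills}, I then check directly that $\omega_P$ kills both $\delta_+$ and $\delta_-$, as in the proof of Proposition \ref{OG27}. The $SL_4$-orbit of $[P]$ is then the closed orbit of $SL_4$ in $\PP(\fsl_4)\subset\PP(\fso_{10})$, namely $Fl_4$, and it is contained in $X$. Since $Fl_4$ is five-dimensional and closed in the smooth connected fivefold $X$, the two coincide.

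The main obstacle is the middle step: pinning down the generic stabilizer precisely, and finding explicit spinors $\delta_\pm$ together with a plane $P$ that is simultaneously a highest weight vector for $\fsl_4$ and killed by both spinors. I would first try this by explicit matrix computation starting from \cite[Proposition 31]{sk}.
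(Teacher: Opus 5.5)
Your overall architecture matches the paper's: the numerical check, $h^0(\cO_X)=1$ from the Koszul complex, an $\fsl_4$ killing both spinors, its adjoint variety $Fl_4$ lying in $X$, and then a closed fivefold inside a smooth connected fivefold. However, the step that carries all the content is left open, and the conceptual argument you sketch for it fails at the start. There is no $Spin_{10}$-equivariant map $\Delta_+\otimes\Delta_-\to V_{10}$: since $\Delta_-\simeq\Delta_+^\vee$, one has $\Delta_+\otimes\Delta_-\simeq\CC\oplus\wedge^2V_{10}\oplus\wedge^4V_{10}$. Clifford multiplication gives instead quadratic maps $q_\pm\colon S^2\Delta_\pm\to V_{10}$, whose values are always isotropic, because $Spin_{10}$ has an open orbit on $\Delta_+$ and so admits no invariant quartic; for instance $q_+(1+e_{1234})\in\CC f_5$. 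So the pair does not single out a non-isotropic $v$ and a $Spin_9$. What it singles out is the hyperbolic plane spanned by $q_+(\delta_+)$ and $q_-(\delta_-)$, and the correct reduction is to $\fso$ of its orthogonal $V_8$. Relatedly, inside a $Spin_9$ the group $\CC^*\times Spin_7$ cannot ``absorb'' the $\Delta_8$-component of $\delta_-$. What removes it is the unipotent part $\{f_5\wedge w: w\in V_8\}$ of the stabilizer of $[\delta_+]$, which acts on $\Delta_-$ by Clifford multiplication and lies in no such $Spin_9$.

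The missing idea is the paper's explicit choice $\delta_+=1+e_{1234}$, $\delta_-=e_5+\zeta e_{12345}$. Each is a sum of two pure spinors whose maximal isotropic spaces meet along $\langle f_5\rangle$, resp.\ $\langle e_5\rangle$. Hence anything killing both spinors kills $e_5$ and $f_5$, and so lies in $\fso(V_8)$ with $V_8=E_4\oplus F_4$, $E_4=\langle e_1,\dots,e_4\rangle$. Since $\delta_-=(1+\zeta e_{1234})\wedge e_5$, for $\zeta\neq 1$ such an element must kill $1$ and $e_{1234}$ separately, which leaves exactly $\fsl(E_4)$ acting by $a\oplus(-a^t)$. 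Its rank-one elements $u\otimes\phi$ with $\phi(u)=0$ correspond to $u\wedge\phi\in\wedge^2V_{10}$, i.e.\ to the isotropic planes $\langle u,\phi\rangle$. This gives $Fl_4\subset X$ at once; your highest root vector is $e_1\wedge f_4$.

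Note also that your expectation of a $15$-dimensional stabilizer for the pair of lines is wrong. The $\fso_2$ of $\langle e_5,f_5\rangle$ acts on $\delta_+$ and $\delta_-$ by the scalars $-\frac12$ and $\frac12$, so this stabilizer is $\fsl_4\oplus\CC$ and the orbit of the pair is only $29$-dimensional. Indeed, with $B$ the form on $V_{10}$, the ratio $B(q_+(\delta_+),q_-(\delta_-))/\langle\delta_+,\delta_-\rangle^2$ is a non-constant invariant. Hence you cannot certify that one explicit pair is general through an open orbit. Instead, keep $\zeta\notin\{0,\pm1\}$ as a parameter, as the paper does, and observe that every general pair can be brought to this form: normalize $\delta_+$, kill the $\wedge^{odd}E_4$-part of $\delta_-$ with $f_5\wedge V_8$, then use $Spin_7$. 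The containment $Fl_4\subset X$ then holds in every case.
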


\proof The stabilizer of $\delta_+=1+e_{1234}\in\Delta_+$ in $\fso_{10}$ is described in \cite[(5.39)]{sk}. Observe that any element in this stabilizer kills $f_5$ and has its image contained in $f_5^\perp$. Note that $\delta_+$ is the sum (in a unique way) of two pure spinors, namely $1$ and $e_{1234}$, which correspond to the maximal isotropic spaces $\langle f_1,f_2,f_3,f_4,f_5\rangle$ and $\langle e_1,e_2,e_3,e_4,f_5\rangle$, respectively; their intersection is the line generated by $f_5$. 

Let us choose another generic spinor, now in $\Delta_-$, say $\delta_-=e_5+\zeta e_{12345}$, with $\zeta^2\ne 1$. Again it is the sum of the two pure spinors $e_5$ and $\zeta e_{12345}$, which correspond to the maximal isotropic spaces $\langle f_1,f_2,f_3,f_4,e_5\rangle$ and $\langle e_1,e_2,e_3,e_4,e_5\rangle$, respectively; their intersection is the line generated by $e_5$. Therefore, any element in the stabilizer of $\delta_-$ has to kill $e_5$, and its image must be contained in $e_5^\perp$. 

With a little extra work, we find that the intersection of the two stabilizers is the space of matrices of the form
$$\begin{pmatrix}
a_1& a_{12}& a_{13}& a_{14}& 0& 0&0&0&0&0 \\
a_{21}& a_2& a_{23}& a_{24}& 0& 0&0&0&0&0 \\
a_{31}& a_{32}& a_3& a_{34}& 0& 0&0&0&0&0 \\
a_{41} & a_{42}& a_{43}& a_4& 0& 0&0&0&0&0 \\
 0&0&0&0&0& 0&0&0&0&0 \\
 0&0&0&0&0& -a_{1}& -a_{21}& -a_{31}& a_{41}& 0\\
 0&0&0&0&0& -a_{12}& -a_{2}& -a_{32}& a_{42}& 0\\
 0&0&0&0&0& -a_{13}& -a_{23}& -a_{3}& a_{43}& 0\\
 0&0&0&0&0& -a_{14}& -a_{24}& -a_{34}& a_{4}& 0\\
 0&0&0&0&0& 0&0&0&0&0  
 \end{pmatrix}$$
 with $a_1+a_{2}+a_{3}+a_{4}=0$; this is just a copy of $\fsl_4$. Now, recall that $OG(2,10)$ can be identified with the space of matrices in $\fso_{10}$, up to scalar, which have rank two and isotropic image.
 Clearly the matrix above has rank two when the matrix $A=(a_{ij})$ has rank one, which means that we can find two nonzero vectors $u,v$
 in $\CC^4$ such that $a_{ij}=u_iv_j$ for all $i,j$.
 Then the image of $A$ is $\sum u_ie_i$ and the image of its transpose is $\sum v_jf_j$; these two vectors  generate an isotropic plane when $\langle u,v\rangle =0$. This means that we get a subvariety of 
$X=(OG(2,10), \cS_+\oplus\cS_-)$ isomorphic to $Fl_4$.
Since  the latter has the correct dimension and the former is connected (a computation based on the Koszul complex gives $h^0(\cO_X)=1$), they must be equal. 
\qed 

\medskip\noindent 
{\it Remark.} We have proved in passing that 
$$\PP(\Delta_+)\times \PP(\Delta_-)\supset Spin_{10}/\Gamma$$
as an open orbit, where $\Gamma$ is locally isomorphic to $SL_4=Spin_6$. Since $\Delta_+$ and $\Delta_-$ are dual one to the other, we can expect the complement of this open orbit to be the invariant quadric defined by this duality.  

\begin{prop}
$(OG(3,10), \cS_+)$ is a Fano manifold of dimension $13$, index $5$ and Picard number one, with a quasi-homogeneous action of $(\mathbb{G}_m\times Spin_7)\rtimes\Delta$. \end{prop}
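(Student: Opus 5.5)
Since $\cS_+$ on $OG(3,10)$ has rank $2^{5-3-1}=2$, we have $\dim X=\dim OG(3,10)-2=15-2=13$. For the index, recall $c_1(\cS_+)=\frac12\cdot 2\,\cL=\cL$. The canonical class of $OG(k,m)$ is $-(m-k-1)\cL$, so $K_{OG(3,10)}=-6\cL$, and adjunction gives $K_X=-5\cL_{|X}$. For the Picard number, the Koszul complex
$$0\lra \cO(-1)\lra \cS_+^\vee\lra \cO\lra \cO_X\lra 0$$
together with Bott vanishing shows $H^0(\cO_X)=\CC$ and $H^1(\cO_X)=H^2(\cO_X)=0$. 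Since $\cS_+$ is globally generated of rank $2<13$, a Lefschetz-type argument for zero loci of ample enough bundles then yields $\mathrm{Pic}(X)=\ZZ\cL$. Alternatively, we can obtain $b_2=1$ from the Hodge-number computation via the conormal sequence described in the introduction. Finally, $\cL$ is not divisible on $X$, so the index is exactly $5$.

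For quasi-homogeneity, take the generic spinor $\delta=1+e_{1234}\in\Delta_+$. By \cite[Proposition 31]{sk}, its stabilizer $\fs$ in $\fso_{10}$ is the Lie algebra of $(\CC^*\times Spin_7)\rtimes\Delta_8$, of dimension $1+21+8=30$, and \cite{sk} gives it explicitly in matrix form. As a candidate generic point, take the plane already used in the proof of the previous result on $OG(3,9)$:
$$P=\langle e_1-f_2,\ e_3+f_4+e_5,\ e_4-f_5\rangle.$$
First check that $P$ is isotropic; the three vectors are pairwise orthogonal and each is null. Next check $\omega_P.\delta=0$ by computing the Clifford action of the product of the three vectors on $1+e_{1234}$, so that $[P]\in X$ by Lemma \ref{kills}. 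Then compute the subalgebra $\fs_P\subset\fs$ of matrices $M$ with $M(P)\subset P$; this is a linear system in the parameters of $\fs$. The goal is $\dim\fs_P=30-13=17$. Then the orbit of $[P]$ has dimension $13=\dim X$, and since $X$ is irreducible (connected and smooth), this orbit is dense.

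The main obstacle is choosing $P$ so that it is genuinely generic: a naive choice, such as a coordinate plane $\langle e_1,e_2,f_3\rangle$, tends to have a stabilizer that is too large. If the plane above fails, I would start instead from a generic point of $(OG(2,10),\cS_+)$, for instance $\langle e_1,f_4\rangle$, whose stabilizer $\fs''$ was computed before. I would then add a third vector $w\in\langle e_1,f_4\rangle^\perp$ such that $\omega_P.\delta=0$, and choose $w$ generic among such vectors so that $\fs''$ acts on the resulting choices with an open orbit. Finally, I would verify the resulting dimension $17$ by a direct (computer-assisted) rank computation.
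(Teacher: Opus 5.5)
Your proposal is essentially the paper's proof. It uses the same spinor $\delta=1+e_{1234}$ with its Sato--Kimura stabilizer $\fs$ and the same plane $P=\langle e_1-f_2,e_3+f_4+e_5,e_4-f_5\rangle$, places $P$ on $X$ via Lemma~\ref{kills}, and checks directly that $P$'s stabilizer has codimension $13$ in $\fs$; your extra bookkeeping (adjunction, Koszul connectedness, $b_2=1$) fills in what the paper leaves implicit. Two cautions: first, $\cS_+$ is not ample on $OG(3,10)$ (it restricts to lines as $\cO\oplus\cO(1)$), so the Lefschetz route needs Sommese's $k$-ample version. That version does work: $\cS_+$ is $6$-ample, since the $\langle\eta\rangle\subset\Delta_-$ killing some $U.\Delta_-$ are exactly the pure spinors, with fibres $G(3,5)$, which gives $H^2(X)\simeq H^2(OG(3,10))$. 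Second, with the introduction's convention ($E$ by wedge, $F$ by contraction) one actually finds $\omega_P.\delta=\pm(e_1+e_{134}-e_{145})\neq 0$, and the identity only holds once the roles of $E$ and $F$ are exchanged, so fix your Clifford convention consistently with the matrices you import from \cite[(5.39)]{sk}.
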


\noindent{\it Betti numbers:} $(1,1,3,4,6,7,8,8,7,6,4,3,1,1)$.

\proof We use the explicit form of the Lie algebra of $(\mathbb{G}_m\times Spin_7)\rtimes\Delta$, given in 
\cite[(5.39)]{sk} as the stabilizer $\fs$ in $\fso_{10}$, of the generic spinor $\delta= 1+e_{1234}$. This spinor is killed by 
the Pl\"ucker representative $\omega_P$ of the isotropic three-plane $P=\langle e_1-f_2, e_3+f_4+e_5, e_4-f_5\rangle$. An explicit computation shows that the stabilizer of $P$ has codimension $13$ in $\fs$. This  coincides with the dimension of $(OG(3,10), \cS_+)$, whose orbit must therefore be dense. \qed 

\begin{prop}
$(OG(3,10), \cS_+\oplus \cS_+)$ is a Fano manifold of dimension $11$, index $4$ and Picard number one, with a quasi-homogeneous action of $G_2\times SL_2$. \end{prop}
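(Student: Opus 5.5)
The numerical invariants come first, and they are routine. $OG(3,10)$ has dimension $3\cdot 7-6+\binom{3}{2}\cdot 0$; more precisely $\dim OG(k,m)=k(m-k)-\binom{k+1}{2}$, which gives $21-6=15$. The bundle $\cS_+$ on $OG(3,10)$ has rank $2^{5-3-1}=2$, so $\cS_+\oplus\cS_+$ has rank $4$ and the zero locus of a general section has dimension $11$.

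For the index, $c_1(\cS_+)=\frac12\cdot 2\,\cL=\cL$, so $c_1(\cS_+\oplus\cS_+)=2\cL$. The index of $OG(3,10)$ is $m-k-1=6$, and adjunction then gives index $6-2=4$.

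Picard number one and connectedness will follow from the Koszul complex
$$0\lra \wedge^4(\cS_+^\vee\oplus\cS_+^\vee)\lra\cdots\lra \cS_+^\vee\oplus\cS_+^\vee\lra\cO\lra\cO_X\lra 0,$$
together with Bott's theorem, which shows that $h^0(\cO_X)=1$ and that the restriction $H^2(OG,\CC)\to H^2(X,\CC)$ is an isomorphism. Alternatively, Lefschetz-type theorems for zero loci of ample-enough globally generated bundles can be used, provided one checks that the relevant twists are acyclic.

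For the group action I would argue as in the proof of $(OG(2,10),\cS_+\oplus\cS_+)=G_2/P_2$. A section is a pair $(s_1,s_2)$, and $X$ depends only on the plane $\langle s_1,s_2\rangle\subset\Delta_+$. By \cite[Proposition 31]{sk}, the generic stabilizer of $GL_2\times Spin_{10}$ on $\CC^2\otimes\Delta_+$ is locally $SL_2\times G_2$. Its projection to $Spin_{10}$ is the stabilizer of the plane $\langle s_1,s_2\rangle$. I expect the $SL_2$ factor to act nontrivially on $V_{10}$, namely through $SO_3\times SO_7\subset SO_{10}$ with $G_2\subset SO_7$ and $V_{10}=\CC^3\oplus\CC^7$. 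Either way we obtain an action of $G_2\times SL_2$ on $X$, of dimension $17$.

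Quasi-homogeneity then requires a point $[P]\in X$ whose stabilizer in $\fg_2\times\fsl_2$ has dimension $17-11=6$. My plan is to use the explicit generic plane of \cite{sk}. One natural choice is $\delta_1=1+e_{1234}$ with $\delta_2=v.\delta_1'$ for a suitable second spinor, modelled on the $\Delta_+=\CC^2\otimes\CC^8$ restriction to $SL_2\times Spin_7$ used for $OG(2,10)$. Using the explicit Lie algebra matrices, I would then search for an isotropic $3$-plane $P$, for instance one mixing an isotropic vector in $\CC^3$ with an isotropic plane of $\CC^7$, such that $\omega_P$ kills both spinors; by Lemma~\ref{kills}, this places $[P]$ in $X$.

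The main obstacle is finding a generic such $P$. A natural guess, guided by the $SO_3\times SO_7$ decomposition, is $P=\langle u, w_1,w_2\rangle$ with $u\in\CC^3$ isotropic and $\langle w_1,w_2\rangle\in OG(2,7)$. However, such a $P$ is likely non-generic, since its orbit has dimension at most $1+5$. So I would instead take $P$ in general position with respect to the splitting, verify $\omega_P.\delta_i=0$, and compute the stabilizer as a linear-algebra kernel, checking that it is $6$-dimensional. This is the same kind of computation as the one done for $(OG(3,10),\cS_+)$.

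Once this is done, the orbit has dimension $11=\dim X$ and is therefore dense, since $X$ is irreducible (being smooth and connected).
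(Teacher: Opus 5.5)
Your outline follows the paper's, and the numerical part is fine: dimension $15-4=11$, $c_1(\cS_+)=\cL$, index $6-2=4$. So is the $17$-dimensional group; your guess $V_{10}=\CC^3\oplus\CC^7$ with $\Delta_+=\CC^2\otimes\Delta_8$ is correct. But the proposal stops exactly where the proof of quasi-homogeneity begins.

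The existence of a point $[P]\in X$ with $6$-dimensional stabilizer is not a routine check that can be postponed: it is equivalent to the statement. ``Take $P$ in general position, verify $\omega_P.\delta_i=0$ and check that the stabilizer has dimension $6$'' presupposes that such a $P$ exists. You have not even fixed the generic pair of spinors: ``$\delta_2=v.\delta_1'$ for a suitable second spinor'' is not a choice. Your own observation that planes adapted to the splitting only sweep out a $6$-dimensional family shows that the obvious candidates fail, so an actual computation is unavoidable.

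The missing input, which the paper supplies, is an explicit point. Take the generic pair $\delta_+=1+e_{1234}$, $\delta'_+=e_{15}+e_{2345}$, whose stabilizer is written out explicitly in \cite[(5.40)]{sk}, and the plane $P=\langle e_5-f_1,\,e_5-f_2,\,e_1+e_2+f_5\rangle$. This $P$ is isotropic, and a direct Clifford computation ($E$ acting by wedge product, $F$ by contraction) gives $\omega_P.\delta_+=\omega_P.\delta'_+=0$. So $[P]\in X$ by Lemma~\ref{kills}. Solving the linear conditions for preserving $P$ inside that explicit Lie algebra leaves a $6$-dimensional subalgebra (in fact isomorphic to $\fsl_2\times\fsl_2$). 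Hence the orbit of $[P]$ has dimension $11$ and is dense.

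A smaller remark on the Picard number. Lefschetz for ample bundles does not apply, because $\cS_+$ is globally generated of rank $2$ with $c_1=\cL$, hence restricts to $\cO\oplus\cO(1)$ on lines. Sommese's version for $k$-ample bundles does apply, with $k=6$. Indeed, the natural map from $\PP(\cS_+^\vee)$ onto a component of $OG(5,10)$ has fibres $G(3,5)$, the $3$-planes in a fixed maximal isotropic space, and the fibres are no larger for $\cS_+\oplus\cS_+$. This gives $H^i(OG(3,10),\mathbb{Z})\simeq H^i(X,\mathbb{Z})$ for $i\le 15-4-6-1=4$. Hence $\rho_X=1$, and also $\cL_{|X}$ is primitive. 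Primitivity is needed for the index to be exactly $4$, and a computation of $H^2(X,\CC)$ via Koszul and Bott alone does not provide it.
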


\noindent  {\it Betti numbers:} $(1,1,3,4,6,-,-,6,4,3,1,1)$.

\proof It is proved in \cite[Proposition 32]{sk} that the generic stabilizer for the action of $Spin_{10}\times GL_2$ on $\Delta_+\otimes \CC^2$ is locally isomorphic to $G_2\times SL_2$.
More precisely they compute the stabilizer of the pair $\delta_+=1+e_{1234}$, $\delta'_+=e_{15}+e_{2345}$.
The Pl\"ucker representative of the isotropic $3$-plane $P=\langle e_5-f_1, e_5-f_2, e_1+e_2+f_5\rangle$ kills both $\delta_+$ and $\delta'_+$, so $[P]$ is a point of $X$. Using \cite[(5.40)]{sk} we find that its  stabilizer is six dimensional, so its orbit has dimension $17-6=11$ and must be dense in $X$.
\qed 

\medskip \noindent {\it Remark}. We can be more precise: with the notations of 
\cite[(5.40)]{sk}, the Lie algebra $\gamma$ of the stabilizer $\Gamma$ of 
$[P]$ in $G_2\times SL_2$ is given by the equations 
$$a_{13}=a_{14}=a_{23}=a_{24}=a_{31}=a_{32}=a_{41}=a_{42}=0, $$
$$d_{12}=a_{21}, \quad d_{21}=a_{12}-a_{21}-a_2, \quad  2d_{11}=-a_2-2a_{21}.$$
The kernel of the projection of $\gamma$ to $\fsl_2$ is the same as the kernel of its projection to $\fgl(P)$, and is the  copy of $\fsl_2$ in $\fg_2$ corresponding to the coefficients $a_{34}, a_{43}, a_4=-a_3$. This implies that $\gamma\simeq\fsl_2\times\fsl_2$,  that 
$G_2\cap\Gamma=\Gamma_0$ is locally isomorphic to 
$SL_2$, and that $X$ is also a compactification of $G_2/\Gamma_0$. Comparing \cite[(5.42)]{sk} with \cite[Example 30, (1.8)]{sk}, we see that $\gamma_0$
is a copy of $\fsl_2$ generated by a short root of $\fg_2\simeq\fsl_3\oplus S^2\CC^3$.

\begin{prop}
$(OG(3,10), \cS_+\oplus \cS_-)$ is a Fano manifold of dimension $11$, index $4$ and Picard number one, with a quasi-homogeneous action of $SL_4$. \end{prop}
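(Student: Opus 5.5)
First I would settle the numerical invariants. $OG(3,10)$ has dimension $3\cdot 10 - 3\cdot 4/2 - 9 = 18$ (more precisely $\dim OG(k,2n)=k(2n-k)-k(k+1)/2$, which here is $21-6=15$). The bundles $\cS_\pm$ have rank $2^{5-3-1}=2$, so $\cS_+\oplus\cS_-$ has rank $4$ and $\dim X = 11$. Each $\cS_\pm$ satisfies $c_1(\cS_\pm)=\cL$, and $OG(3,10)$ has index $2n-k-1=6$. Adjunction then gives $K_X=(-6+2)\cL$, so the index is $4$. Since $\cL$ is ample and $\dim X=11$ is large compared with the codimension $4$, a Lefschetz-type argument through the Koszul complex gives Picard number one. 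Connectedness follows from $h^0(\cO_X)=1$, which I would check via the Koszul complex and Bott's theorem, as in Proposition \ref{fl134}.

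For the group action I would reuse the set-up of Proposition \ref{fl134}. There, the common stabilizer in $\fso_{10}$ of $\delta_+=1+e_{1234}$ and $\delta_-=e_5+\zeta e_{12345}$ (with $\zeta^2\neq1$) was computed explicitly as a copy of $\fsl_4=\fso_6$. This copy acts on $E_4=\langle e_1,\dots,e_4\rangle$ and its dual $F_4=\langle f_1,\dots,f_4\rangle$, and kills $e_5,f_5$. Since $\dim SL_4=15$ exceeds $\dim X=11$, a point of $X$ with $4$-dimensional stabilizer has a dense orbit.

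The steps, in order, are as follows. First, find an isotropic $3$-plane $P$ with $\omega_P.\delta_+=0$ and $\omega_P.\delta_-=0$, so that $[P]\in X$ by Lemma \ref{kills}. Second, compute the Lie algebra $\gamma_P=\{A\in\fsl_4 : A(P)\subset P\}$ from the matrix form above. Third, verify that $\dim\gamma_P=4$, so that the orbit has dimension $15-4=11$ and is therefore dense.

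The main obstacle is choosing $P$ generically enough. Planes inside $V_6=E_4\oplus F_4$ or spanned by weight vectors have large stabilizers, and the group $SL_4$ never moves the $\langle e_5,f_5\rangle$ components. So $P$ should mix $e_5$ and $f_5$ with the other basis vectors in a non-torus-fixed way, for example $P=\langle e_1+f_2+a e_5,\; e_3-f_4+b f_5,\; \dots\rangle$. I would tune the coefficients $a,b,\zeta$ until both killing conditions hold; these amount to a few explicit wedge and contraction computations on $\delta_\pm$. I would then check that the projection of $P$ to $V_6$ is a non-degenerate generic $3$-plane. Its stabilizer in $\fso_6$ should then be small, and the condition of preserving $P$ itself, rather than just its projection, cuts it down further. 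I would expect $\gamma_P$ to come out as something like $\fgl_2$ or a torus extended by nilpotents, of dimension $4$.

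Local rigidity, if needed here, would follow the routine of Proposition \ref{rigidity} using condition (\ref{4'})-type vanishings. It is not part of this statement, though; it is covered by the computer verification the paper mentions.
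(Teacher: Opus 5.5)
\textbf{Verdict.} Your strategy is the paper's own: exhibit an isotropic $3$-plane $P$ killed by $\delta_+=1+e_{1234}$ and $\delta_-=e_5+\zeta e_{12345}$, compute its stabilizer in the copy of $\fsl_4$ fixing both spinors, and count dimensions. Your target $\dim\gamma_P=4$ is the right one. But the proposal stops where the proof begins: no $P$ is produced, and $\dim\gamma_P=4$ is only ``expected''. This is a genuine gap, because the quasi-homogeneity claim rests entirely on this step.

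\textbf{Why the step is not routine.} The paper itself takes $P=\langle e_1+f_2,e_3-f_4,e_3-e_5\rangle$. This plane meets $W=E_4\oplus F_4$ in a plane, and its stabilizer in $\fsl_4$ is $6$-dimensional. Its orbit is therefore only $9$-dimensional, not dense in the $11$-dimensional $X$; the paper's count slips there.

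Your genericity test also cannot be met. For $u,u'\in P$ one has $(\pi_Wu,\pi_Wu')=-(\pi_Nu,\pi_Nu')$ with $N=\langle e_5,f_5\rangle$. So the form on the projection of $P$ to $W$ has rank at most $2$ and is never non-degenerate. Note also that $W$ is $8$-dimensional, not $V_6$.

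Finally, do not tune $\zeta$. The element of $\fso_{10}$ acting by $\pm1$ on $e_5,f_5$ fixes both lines $[\delta_\pm]$ but is not in $\fsl_4$. Hence the orbit of $([\delta_+],[\delta_-])$ in $\PP(\Delta_+)\times\PP(\Delta_-)$ has dimension at most $29$, and $\zeta$ is a genuine modulus. So $P$ must be found for every $\zeta$ with $\zeta^2\neq1$.

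\textbf{How to close the gap.} Look for $P=\langle w_0,w_1+e_5,w_2+f_5\rangle$ with $w_i\in W$, so that $(w_1,w_2)=-1$ for the form with $(e_i,f_j)=\delta_{ij}$. Write $\delta_+=\eta$ and $\delta_-=\eta'\wedge e_5$, with $\eta=1+e_{1234}$ and $\eta'=1+\zeta e_{1234}$. In the Clifford algebra, $w_0$ anticommutes with $w_1,w_2$, and $w_1w_2+w_2w_1=-1$. Expanding $w_0(w_1+e_5)(w_2+f_5)$ then shows that $\omega_P$ kills both spinors if and only if $w_0w_2\eta=0$ and $w_0w_1\eta'=0$.

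For $w_0=e_1+f_2$ these conditions say that $w_2\in\langle e_1,f_2,e_3-f_4,e_4+f_3\rangle$ and $w_1\in\langle e_1,f_2,\zeta e_3-f_4,\zeta e_4+f_3\rangle$. For instance,
\[
P=\langle e_1+f_2,\ e_1+\zeta e_3-f_4+e_5,\ e_1+\tfrac{1}{1-\zeta}(e_4+f_3)+f_5\rangle
\]
is a point of $X$.

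Let $Ae_j=\sum_i a_{ij}e_i$, with $A$ acting on the $f_j$ by $-A^t$. The stabilizer of $[e_1+f_2]$ in $\fsl_4$ is the $9$-dimensional subalgebra
\[
a_{21}=a_{31}=a_{41}=a_{23}=a_{24}=0,\qquad a_{22}=-a_{11},\qquad a_{44}=-a_{33}.
\]
Inside it, the stabilizer of $P$ is cut out by
\[
a_{33}=a_{34}=a_{43}=0,\qquad a_{42}=a_{11}+\zeta a_{13},\qquad a_{32}=-a_{14}-(1-\zeta)a_{11}.
\]
This is $4$-dimensional: a one-dimensional torus extended by a $3$-dimensional unipotent part, as you guessed. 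So the $SL_4$-orbit of $P$ is $11$-dimensional and dense.

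\textbf{Picard number.} The bundles $\cS_\pm$ restrict to lines as $\cO\oplus\cO(1)$, so they are not ample. A Lefschetz theorem for zero loci therefore does not apply directly. The claim $\rho_X=1$ has to come from the Koszul/Bott computation of $h^{1,1}(X)$.
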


\noindent  {\it Betti numbers:} $(1,1,3,4,-,-,-,-,4,3,1,1)$.

\proof We compute the stabilizer of an element of $(OG(3,10), \cS_+\oplus \cS_-)$, which we choose to be the $3$-plane $P=\langle e_1+f_2,e_3-f_4, e_3-e_5\rangle$; a straightforward computation shows that $\omega_P$ kills both $\delta_+$ and $\delta_-$. 

From the description of $\fsl_4$ as the common stabilizer to  $\delta_+$ and $\delta_-$ given in the proof of Proposition \ref{fl134}, it is easy to describe the subalgebra that preserves $P$; it is given by the matrices $A$ of the form 
$$A=\begin{pmatrix}
0&\alpha & * &* \\ 0&\beta& *&* \\ 0&0&-\beta & \alpha\\
0&0&0&0 \end{pmatrix}$$
with respect to the basis $e_3,e_1,e_2,e_4$.
In particular, it has dimension $6$, so the orbit of $\omega_P$ has dimension $9$ and must be dense in 
$(OG(3,10), \cS_+\oplus \cS_-)$.\qed 

\begin{prop}
$(OG(5,10)_-, \cS_+)=\QQ^5$
\end{prop}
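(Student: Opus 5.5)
The idea is to identify $OG(5,10)_-$ with the spinor tenfold in $\PP(\Delta_-)$ and to interpret $\cS_+$ on it. By the discussion of spinor varieties above, on $OG(5,10)_-$ one spinor bundle is the hyperplane bundle of the embedding into $\PP(\Delta_-)$. The other one, here $\cS_+$, is a twist of the tautological rank five bundle, with $H^0(\cS_+)=\Delta_+\simeq\Delta_-^\vee$. Concretely, for a maximal isotropic $U$ in the family $-$, the fiber $\cS_{+,[U]}$ is the cokernel of $U\otimes\Delta_-\to\Delta_+$, that is, a quotient of $\Delta_+$ of dimension five. By the analogue of Lemma \ref{kills}, a section given by $\delta\in\Delta_+$ vanishes at $[U]$ exactly when $\delta\in U\cdot\Delta_-$.

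The plan is to fix $\delta=1+e_{1234}$, the generic spinor used throughout (or $\delta_+$ as in Proposition \ref{fl134}), and describe $X=Z(s_\delta)$ explicitly. Its dimension is $10-5=5$. The zero locus should be the set of $U$ containing the distinguished vector. Recall from the proof of Proposition \ref{fl134} that $\delta$ is the sum of two pure spinors, whose maximal isotropic spaces meet along the line $\langle f_5\rangle$, and that the stabilizer of $\delta$ kills $f_5$ and maps into $f_5^\perp$. I would show that $\delta\in U\cdot\Delta_-$ if and only if $U$ meets a certain $7$-dimensional nondegenerate subspace $W\subset V_{10}$ in a $3$-dimensional subspace, or, more likely, that $U$ is spanned by an isotropic line in a $7$-dimensional quadric space together with fixed data. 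The cleanest target is a Spin$_7$-equivariant isomorphism. The stabilizer of $[\delta]$ is $(\CC^*\times Spin_7)\rtimes\Delta_8$, so $X$ carries this action, and one expects the reductive part to act on $X$ through $Spin_7$. The smallest $Spin_7$-homogeneous projective varieties of dimension five are $\QQ^5=Spin_7/P_1$ and nothing else, since $OG(3,7)$ has dimension six and $OG(2,7)$ has dimension seven.

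As an alternative, and to make this rigorous, I would argue via the index. By adjunction, $K_X=K_{OG}+\det\cS_+$. Since $OG(5,10)$ has index $8$ and $\det\cS_+=3H$ ($n-2=3$), the index of $X$ is $5=\dim X$. Provided $X$ is smooth, connected and of Picard number one (or merely smooth and connected Fano of index $\dim X$), the Kobayashi--Ochiai theorem forces $X\simeq\QQ^5$. Smoothness follows from generic sections of a globally generated bundle. Connectedness follows from $h^0(\cO_X)=1$, which I would check with the Koszul complex $\wedge^\bullet\cS_+^\vee$ and Bott's theorem, exactly as in Proposition \ref{fl134}. To confirm that the Fano index is exactly $5$, rather than $H|_X$ being divisible, it suffices to exhibit a line of $OG(5,10)$ inside $X$, or to use that the index $\dim X+1$ would give $\PP^5$, which is excluded since $Spin_7$ has no nontrivial $5$-dimensional representation.

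The main obstacle is the vanishing needed for $h^0(\cO_X)=1$, i.e.\ the acyclicity of $\wedge^j\cS_+^\vee$ for $1\le j\le5$ on $OG(5,10)_-$. These are twists $\wedge^j\cU\otimes\cO(-jH/2\ldots)$. I would compute their weights and apply Bott's theorem, expecting all of them to be singular weights after adding $\rho$. As a safeguard, I would identify $X$ directly as the orbit of $U_0=\langle f_1,f_2,f_3,f_4,f_5\rangle$ under $Spin_7$.
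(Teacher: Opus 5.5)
Your rigorous route is the paper's proof. Adjunction gives $-K_X=(8-3)H_{|X}=5H_{|X}$ on a fivefold, so $X\simeq\QQ^5$ by Kobayashi--Ochiai. Your Koszul/Bott check usefully supplies the nonemptiness and connectedness that the paper leaves implicit: the bundles $\wedge^j\cS_+^\vee=\wedge^j\cU^\vee(-jH)$, $1\le j\le 5$, are indeed all acyclic. Your worry about the index being exactly $5$ is moot, since Kobayashi--Ochiai applies directly to $c_1(X)=5\,c_1(H_{|X})$.

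Discard the exploratory first part, however, because it is wrong in its details. For a maximal isotropic $U$, the cokernel of $U\otimes\Delta_-\to\Delta_+$ is zero or a line, never of dimension five. For instance, take $U_0=\langle f_1,\dots,f_5\rangle$, whose pure spinor is $1\in\Delta_+$. A section $\delta$ of $\cS_+$ vanishes at $U_0$ iff $\delta\in\wedge^0E\oplus\wedge^2E$, so $U_0\notin X$ for $\delta=1+e_{1234}$. Also, the locus of $U$ containing $f_5$ is six-dimensional, not five.
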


\proof By adjunction, $(OG(5,10)_-, \cS_+)$ is a Fano fivefold of index $5$ -- hence a quadric. \qed 

\begin{prop}
$(OG(2,11), \cS)=Fl_5$
\end{prop}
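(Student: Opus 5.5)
The plan mirrors the proof of Proposition \ref{fl134}. First, the numerics. $OG(2,11)$ has dimension $2\cdot 11-7=15$. The spinor bundle $\cS$ has rank $2^{5-2-1}=4$ and $c_1(\cS)=2\cL$. Since $K_{OG(2,11)}=-8\cL$, the variety $X=(OG(2,11),\cS)$ is an $11$-fold with $K_X=-6\cL$. This agrees with $Fl_5\subset\PP^4\times\check\PP^4$, which has dimension $7$. To fix the actual target, note that $\dim Fl_5 = 2\cdot 4-1=7$, so the numbers must be rechecked; the natural candidate is the adjoint variety of $SL_5$. That candidate fits the generic stabilizer $SL_5$ of Table 1, and the identification is as subvarieties of $\PP(\fsl_5)$, the adjoint varieties of $SL_5$ and $Spin_{11}$.

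The main step is to show that $Fl_5\subset X$. I would use the analogue of the argument in Proposition \ref{OG27}. $OG(2,11)$ is the closed $Spin_{11}$-orbit in $\PP(\fso_{11})$, namely rank-two matrices with isotropic image. Fix a generic spinor $\delta\in\Delta$ with stabilizer $\fsl_5$, taken from the explicit computation in \cite{sk}. Decompose $V_{11}=E\oplus F\oplus \CC$ with $\dim E=5$, and arrange that $\fsl_5$ acts on $E$ and $F\simeq E^\vee$ in the standard way and trivially on the extra line. The expected shape is $\delta=1+e_{12345}$ or a similar sum of pure spinors, and its stabilizer then contains the block-diagonal matrices $\mathrm{diag}(A,-A^t,0)$ with $A\in\fsl_5$, exactly as in Proposition \ref{fl134}.

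Restricting $\fso_{11}\simeq\wedge^2 V_{11}$ to $\fsl_5$ contains $\fsl_5$ itself. A matrix $\mathrm{diag}(A,-A^t,0)$ has rank two precisely when $A=u\otimes v$ has rank one. Its image is then $\langle \sum u_ie_i,\sum v_jf_j\rangle$, which is isotropic iff $\langle u,v\rangle=0$. So $\PP(\fsl_5)\cap OG(2,11)\supset Fl_5$. It remains to check that these planes lie in $X$. By Lemma \ref{kills}, the plane must kill $\delta$. By $SL_5$-equivariance it suffices to check one point, the highest root vector $e_1\wedge f_5$, and the plane $\langle e_1,f_5\rangle$ visibly kills $1+e_{12345}$: $e_1\wedge f_5$ acts by wedging with $e_1$ and contracting with $f_5$, and each term is annihilated. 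Hence the closed $SL_5$-orbit $Fl_5$ lies in $X$.

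To conclude $X=Fl_5$, I would compare dimensions and connectedness. Since $\dim X$ as computed above does not obviously match $\dim Fl_5$, I would recompute carefully. If $\dim X$ equals $\dim Fl_5$, it would remain to show that $X$ is connected, using the Koszul complex and Bott's theorem to get $h^0(\cO_X)=1$ as in Proposition \ref{fl134}. Smoothness of $X$ then gives irreducibility, and equality follows. Alternatively, as in Proposition \ref{G2P2}, one could argue via the index and the $SL_5$-action.

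The hardest step is pinning down the explicit generic spinor in the odd case, together with the embedding $\fsl_5\subset\fso_{11}$. This needs to be extracted from \cite{sk}. A second risk is the dimension count, which must come out right for the containment to give equality.
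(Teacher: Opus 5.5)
Your strategy is the paper's. The stabilizer $\fsl_5$ of a generic spinor sits in $\fso_{11}$ in the standard way, preserving $E\oplus F\oplus L$, so the adjoint variety $Fl_5\subset\PP(\fsl_5)$ lies in $OG(2,11)\subset\PP(\fso_{11})$, and one concludes by dimension and connectedness as in Proposition \ref{fl134}.

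Your containment step is correct, and more explicit than the paper's:
\begin{itemize}
\item $\delta=1+e_{12345}$ has stabilizer exactly $\fsl_5$ in $\fso_{11}$, so $[\delta]$ lies in the open orbit. It is the restriction of the paper's $1+e_{123456}$ to $Spin_{11}$.
\item For $U=\langle e_1,f_5\rangle$ one has $\delta\in U\cdot\Delta$, since $1=f_5\cdot e_5$ and $e_{12345}=e_1\wedge e_{2345}$.
\end{itemize}

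The gap is the step you leave conditional, which fails with your numbers. The formula $\mathrm{rank}\,\cS=2^{n-k-1}$ holds only for $m=2n$. For $m=2n+1$, the fiber at $[U]$ is the spin representation of the $(2n+1-2k)$-dimensional space $U^\perp/U$, which has dimension $2^{n-k}$. This is why the paper's $\cS$ has rank $4$ on $OG(3,11)$ and rank $2$ on $OG(3,9)$.

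On $OG(2,11)$ you have $U^\perp/U\simeq\CC^7$, so $\cS$ has rank $8$ and $c_1(\cS)=4\cL$. Hence $\dim X=15-8=7=\dim Fl_5$ and $K_X=-4\cL$. This matches $K_{Fl_5}=\cO(-4,-4)$, because $\cL$ restricts to $\cO(1,1)$ on $Fl_5\subset\PP^4\times\check\PP^4$. With your count ($\dim X=11$), the inclusion $Fl_5\subset X$ would prove nothing, so as written the argument does not conclude.

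Connectedness is also not automatic here: the paper's $(OG(2,13),\cS)$ and $(OG(2,14),\cS_+)$ are disconnected. You must actually show $h^0(\cO_X)=1$. One way is to check with Bott's theorem that $H^j(OG(2,11),\wedge^j\cS^\vee)=0$ for $1\le j\le 8$ and feed this into the Koszul complex, as the paper does in Proposition \ref{fl134}. With these two points supplied, your proof coincides with the paper's.
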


\proof According to \cite[Proposition 39]{sk}, the generic stabilizer of the action of $\CC^*\times Spin_{11}$ on the $32$-dimensional spin representation $\Delta$ is locally isomorphic to $SL_5$. Moreover, the corresponding embedding of $\fsl_5$ into $\fso_{11}$ is the standard one, in the sense that it preserves a decomposition of $\CC^{11}$ as $E\oplus F\oplus L$ where $E,F$ are maximal isotropic (hence $5$-dimensional) and $L$ is the orthogonal line to the hyperplane $E\oplus F$. Then, exactly as we observed in the proof of Proposition \ref{fl134}, the adjoint variety of $\fsl_5$ embeds in the adjoint variety of $\fso_{11}$, and the claim follows.  \qed 

\begin{prop}\label{OG311}
$(OG(3,11), \cS)$ is a Fano manifold of dimension $14$, index $5$ and Picard number one, with a quasi-homogeneous action of $SL_5$.
\end{prop}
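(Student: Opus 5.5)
The proof splits into three parts: the numerical invariants, the Picard number, and quasi-homogeneity.

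\emph{Invariants.} $OG(3,11)$ has dimension $3(11-3)-6 = 18$, and its canonical bundle is $\cO(-(11-3-1)) = \cO(-7)$. The spinor bundle $\cS$ has rank $2^{5-3} = 4$ and, by the first Chern class formula recalled in the introduction, $c_1(\cS) = 2\cL$. So $X$ is the zero locus of a general section of a globally generated bundle of rank $4$. It is smooth of dimension $18 - 4 = 14$, and by adjunction $K_X = \cO(-7+2) = \cO(-5)|_X$, which gives index $5$.

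\emph{Picard number.} Since $\operatorname{rank}\cS = 4 < 14$, one option is a Lefschetz-type theorem for zero loci of ample bundles. Ampleness of $\cS$ is not clear, so instead I would use the Koszul complex
$$0\to \wedge^4\cS^\vee\to\cdots\to\cS^\vee\to\cO\to\cO_X\to0,$$
twisted by $\Omega_{OG}$ and its powers, together with Bott's theorem. This should give $h^{1,1}(X)=1$ and $h^0(\cO_X)=1$, the same computation used for the Betti numbers.

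\emph{Quasi-homogeneity.} This is the main step. By \cite[Proposition 39]{sk}, the generic stabilizer of $[\delta]\in\PP(\Delta)$ has Lie algebra $\fsl_5$, embedded in the standard way preserving $\CC^{11}=E\oplus F\oplus L$ with $\dim E=\dim F=5$. Take for $\delta$ the explicit generic spinor of \cite{sk}, a sum of two pure spinors such as $1+e_{12345}$ (up to the correct normalization). The plan is:
\begin{enumerate}
\item Find an isotropic $3$-plane $P$ whose Plücker vector $\omega_P$ kills $\delta$, so that $[P]\in X$ by Lemma \ref{kills}. As a guide from the $\fsl_5$-geometry, $P$ should mix $E$, $F$ and $L$, for instance $P=\langle e_1+f_2,\ e_3+f_4,\ e_5+\lambda\ell+f_5\rangle$ with constants adjusted so that $P$ is isotropic and in general position.
\item Compute the stabilizer of $P$ inside $\fsl_5$ by linear algebra and check that it has dimension $24-14=10$. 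The orbit $SL_5\cdot[P]$ then has dimension $14$, so it is open in the irreducible variety $X$.
\end{enumerate}

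The main obstacle is step 1: choosing $P$ generic enough that its stabilizer drops to dimension exactly $10$. Naive choices inside $E\oplus F$ will tend to have stabilizers that are too large. I would first try guessing a $10$-dimensional subgroup, such as a reductive $GL_2\times SL_2$ extended by a unipotent part, or a parabolic-type subgroup. I would then search for a $3$-plane it fixes, using the $SL_5$-decomposition $\Delta|_{SL_5}=\wedge^\bullet E$ and imposing $\omega_P.\delta=0$. Failing that, I would take a random point of $X$ found by a computer search and compute its stabilizer numerically.
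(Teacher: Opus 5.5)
Your outline follows the paper's argument. Adjunction gives dimension $14$ and $K_X=\cO_X(-5)$. Quasi-homogeneity comes from a point $[P]\in X$, certified by Lemma~\ref{kills}, whose stabilizer in $\fsl_5$ is $10$-dimensional. The gap is that this step, which is the whole content of the proposition, is never carried out: you do not check $\omega_P\cdot\delta=0$, and you do not compute the stabilizer.

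The rule you give for choosing the constants, isotropy plus general position, misses the condition that actually decides the matter. Isotropy fixes $\lambda$ up to sign, and whether $[P]\in X$ then depends on how $\delta$ is normalized. To see this, let $e_i$ act by wedge product, $f_j$ by contraction, and $\ell$ by $\kappa(-1)^{\deg}$ with $\kappa^2=\ell\cdot\ell$. Isotropy of $e_5+f_5+\lambda\ell$ forces $\mu:=\lambda\kappa=\pm i$, and a direct computation gives
\[
\omega_P\cdot(1+c\,e_{12345})=(1-c\mu)\,e_{135}+(c+\mu)\,e_{13}.
\]
So your plane lies on $X$ exactly when $c=-\mu$. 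In particular it does not lie on $X$ for $\delta=1+e_{12345}$ itself. Rescaling $e_5$ and $f_5$ by a torus element repairs this: $P=\langle e_1+f_2,\ e_3+f_4,\ e_5-f_5+\kappa^{-1}\ell\rangle$ is isotropic and satisfies $\omega_P\cdot(1+e_{12345})=0$.

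With this choice the proof closes as in the paper. An element $A\in\fsl_5$ acts by $A$ on $E$, by $-A^t$ on $F$, and trivially on $\ell$. Hence $AP\subset P$ forces each $Au_i$ to lie in $\langle e_1+f_2,e_3+f_4\rangle$. Writing this out, rows $1$ and $3$ of $A$ are free, and rows $2,4,5$ are determined by them:
\[
(0,-A_{11},0,-A_{31},0),\qquad (0,-A_{13},0,-A_{33},0),\qquad (0,A_{15},0,A_{35},0).
\]
These relations force $\mathrm{tr}A=0$. So the stabilizer has dimension $10$, and the orbit has dimension $24-10=14$. The orbit is therefore dense, since $X$ is smooth and connected ($h^0(\cO_X)=1$).

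Up to the $3$-cycle $e_1\mapsto e_5\mapsto e_2\mapsto e_1$, an element of $SL_5$ fixing $\delta$, and up to sign conventions, this plane is the paper's $P'$, and the stabilizer matches the paper's matrix description. Your Picard-number paragraph is also only a sketch, but the paper gives no more detail there.
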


\noindent  {\it Betti numbers:} $(1,1,2,3,5,-,-,-,-,-,5,3,2,1,1)$

\proof The (generic) stabilizer of $\delta=1+e_{123456}$ in $Spin_{11}$ is computed in the proof of \cite[Proposition 38]{sk}, where they embed $\CC^{11}$ in $\CC^{12}$ as the hyperplane orthogonal to $e_6-f_6$; they prove its Lie algebra is isomorphic to $\fsl_5$ by checking that it must kill both $e_6$ and $f_6$. 

We need to modify the $3$-plane we chose in the previous proof since it was not contained in the hyperplane $\CC^{11}$: we replace it by  $P'=\langle e_1-f_1-e_6-f_6, e_2+f_5, e_3+f_4\rangle$, which is easily check to be isotropic, orthogonal to $e_6-f_6$, and to kill $\delta$. A straightforward computation shows that its stabilizer in $\fsl_5$ is the space of matrices of the form
$$\begin{pmatrix}
0&0&0 &\beta_1&\beta_2 \\
\beta_2& \alpha_{21}& \alpha_{22}& \gamma_{11}& \gamma_{12}\\
\beta_1& \alpha_{11}& \alpha_{12}& \gamma_{21}& \gamma_{22}\\
0&0&0 &- \alpha_{12}& -\alpha_{22}\\
0&0&0 &- \alpha_{11}& -\alpha_{21}
\end{pmatrix}$$
In particular its dimension is $10$, so the $SL_5$-orbit of $P'$ has dimension $24-10=14$ and must be open in $X$. \qed

\medskip\noindent {\it Remark.} The previous computation shows that the generic stabilizer of the action of $SL_5$ on $(OG(3,11),\cS)$ is locally isomorphic to $GL_2\rtimes U_6$, with $U_6$ unipotent and acted on by $GL_2$ through the sum of the adjoint and the natural representations.

\begin{prop}\label{rigidityOG311}
$(OG(3,11), \cS)$ is locally rigid.
\end{prop}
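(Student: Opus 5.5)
The plan is to follow the strategy of Proposition \ref{rigidity}: resolve $\cO_X$ by the Koszul complex of the section $s$, twist it by $T(OG)$ and by $\cS$, and read off the needed cohomology from Bott's theorem. The paper warns that here some unexpected non-vanishing shows up, so I will look for where it occurs and how to neutralize it in the long exact sequence (\ref{long}).

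Set $OG=OG(3,11)$. The bundle $\cS$ has rank $2^{5-3-1}=2$ and $\det\cS=\cO(1)$, so the Koszul complex is
$$0\lra\cO_{OG}(-1)\lra\cS^\vee\lra\cO_{OG}\lra\cO_X\lra 0.$$
The tangent bundle is an extension of $\wedge^2\cU^\vee=\cE_{\omega_2}$ by $Hom(\cU,\cU^\perp/\cU)$. The quotient $\cU^\perp/\cU$ is the natural bundle of rank $5$ for $\fso_5$, hence isomorphic to $\wedge^2_0$ of the rank-four spinor bundle, that is $\cE_{\omega_4}\otimes\cO(-1)$ in $B_5$ notation. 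So I twist by the irreducible pieces $\cE_{\omega_2}$ and $\cE_{\omega_1-\omega_3+\omega_4}$, and by $\cS^\vee$ and $\cO(-1)$. I then apply Bott's theorem to each of the six bundles, and for condition (\ref{c2}) to $\cS\otimes\cS^\vee$, $\cS(-1)$ and $\cS$.

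For $\cS$, note that $\cS\otimes\cS^\vee$ may now contain a component $\cE_{\omega_4}\otimes\cE_{-\omega_3+\omega_4}$-type piece, i.e. $\wedge^2$ or $S^2$ of the spinor $\fso_5$-module. I expect the unexpected term to arise here or in $Hom(\cU,\cU^\perp/\cU)\otimes\cS^\vee$. That is, some $\cE_\mu$ will have weight $\mu+\rho$ regular of length one, giving a nonzero $H^1$ (or an $H^0$ in a twist by $\cO(-1)$).

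If the extra class lives in $H^1(T(OG)\otimes\cS^\vee)$ or in $H^2(T(OG)(-1))$, it feeds into $H^0$ or $H^1$ of $T(OG)_{|X}$, and conditions (\ref{c1}), (\ref{c11}) fail as stated. I would then not insist on (\ref{c1})--(\ref{c2}) separately. Instead, I would compute $\chi$ and the actual spaces $H^0(X,T(OG)_{|X})$ and $H^1(X,T(OG)_{|X})$ as $Spin_{11}$-modules via the spectral sequence of the twisted Koszul complex. The aim is to show that the connecting map cancels the unexpected module: it should be a nonzero equivariant map between two copies of the same irreducible module, induced by multiplication by $s$, hence an isomorphism. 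Concretely, the unexpected module $W$ should appear both in $H^q(T(OG)\otimes\cS^\vee)$ and in $H^{q}(T(OG))$ or $H^{q+1}(T(OG)(-1))$, and the Koszul differential between them is contraction with $s$. I would check nonvanishing of that differential by a weight argument or directly, using that $s$ is generic.

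Alternatively, if the cancellation fails, I would keep the extra term and run the long exact sequence
$$0\to H^0(TX)\to H^0(T(OG)_{|X})\to H^0(\cS_{|X})\to H^1(TX)\to H^1(T(OG)_{|X}).$$
Here I would use that $\fg\to\Delta/\CC s$ is surjective, which holds because the $Spin_{11}$-orbit of $[s]$ is open (Table 1, stabilizer $SL_5$, dimension $55-24=31=\dim\PP(\Delta)$). Then $H^1(TX)$ injects into $H^1(T(OG)_{|X})$, and I would show the latter is zero or maps to zero. I expect the main obstacle to be exactly this: locating the non-acyclic Bott term and showing that the corresponding Koszul differential is an isomorphism for generic $s$. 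I would first try the equivariance/Schur argument, with an explicit check at $s=1+e_{123456}$ if needed.
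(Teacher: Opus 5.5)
Your first concrete step is wrong, and the error hides exactly the difficulty. On $OG(3,11)$ the space $\cU^\perp/\cU$ is a $5$-dimensional quadratic space, so the fibre of $\cS$ is the $4$-dimensional spin module of $\fso_5$. The count $2^{n-k-1}$ you used is the one for $OG(k,2n)$. You in fact use the correct rank yourself when you write $\cU^\perp/\cU=\cE_{\omega_4}(-1)$ as $\wedge^2_0$ of ``the rank-four spinor bundle''. Hence $\det\cS=\cO(2)$, $\dim X=18-4=14$, and the Koszul complex is
\[ 0\lra\cO(-2)\lra\cS(-2)\lra\wedge^2\cS^\vee\lra\cS^\vee\lra\cO\lra\cO_X\lra 0, \]
with $\wedge^2\cS^\vee=\mathcal{R}(-2)\oplus\cO(-1)$ and $\mathcal{R}=\cE_{\omega_4}$. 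The unexpected cohomology lives in the middle term, which your three-term complex does not have. Bott gives $H^1(\wedge^2\cU^\vee\otimes\mathcal{R}(-2))=\CC$, since the weight $(0,0,-1,1,0)+\rho$ is one simple reflection away from $\rho$. It also gives $H^2(Hom(\cU,\cU^\perp/\cU)\otimes\wedge^2\cS^\vee)=\CC$, and all other terms behave as expected. With your complex, every twist of $T(OG)$ and of $\cS$ comes out exactly as in Proposition \ref{rigidity}. You would therefore ``verify'' (\ref{c1}), (\ref{c11}), (\ref{c2}) from a complex that does not resolve $\cO_X$, without ever meeting the problem.

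The cancellation mechanism you anticipate is also not what happens, and it would not be justified as stated. The two extra classes are trivial modules sitting in $H^1$ and $H^2$ of the \emph{same} Koszul term $T(OG)\otimes\wedge^2\cS^\vee$. They are linked by the connecting map $\phi$ of the extension $0\to Hom(\cU,\cU^\perp/\cU)\to T(OG)\to\wedge^2\cU^\vee\to 0$, which does not involve $s$ at all. Moreover, Schur's lemma cannot show that an equivariant map $\CC\to\CC$ is nonzero.

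The paper argues by contradiction. If $\phi=0$, some line in $H^0(T(OG))=\wedge^2V_{11}^\vee$ consists of vector fields vanishing along $X$. Being $SL_5$-stable, this line is spanned by $\mathrm{id}_E-\mathrm{id}_F$. But that element does not stabilize the point $P'\in X$ of Proposition \ref{OG311}, a contradiction.

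Your fallback route, on the other hand, becomes a complete and shorter proof of local rigidity once the correct complex is used:
\begin{itemize}
\item The extra classes sit in total degrees $-1$ and $0$ of the hypercohomology spectral sequence, so $H^1(X,T(OG)_{|X})=0$ whatever $\phi$ is.
\item Condition (\ref{c2}) holds by Bott.
\item The map $H^0(T(OG)_{|X})\to H^0(\cS_{|X})=\Delta/\CC s$ is onto, because its composite with restriction from $\fso_{11}$ is the orbit differential at $[s]$, which is surjective.
\end{itemize}
What the paper's extra argument about $\phi$ buys is condition (\ref{c1}), that is $H^0(X,TX)\simeq\fsl_5$, and hence the identification of $Aut(X)$ up to finite index.
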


\proof Let $X$ be the zero locus of a general section $s \in H^0(OG(3,11),\cS)$ and set $OG := OG(3,11)$. We start as in the proof of Proposition \ref{rigidity}. Since $\cS$ has rank $4$ and determinant $\cO_{OG}(2)$, the Koszul complex takes the following form:
\[
0\lra \cO_{OG}(-2) \lra \cS(-2)  \lra \bigwedge^2\cS^\vee \lra \cS^\vee \lra \cO_{OG}\lra \cO_X \lra 0.
\]
The tangent bundle of $OG$ can be represented as an extension 
\[
0\lra Hom(\cU,\cU^\perp/\cU)\lra T(OG)\lra \bigwedge^2\cU^\vee\lra 0.
\]
In the following, we denote $\mathcal{R} = \cE_{\omega_4}$, which satisfy $\bigwedge^2\cS = \mathcal{R} \oplus \cO(1)$. 

In terms of irreducible vector bundles, we have $\cU^\vee = \cE_{\omega_1}$, $\bigwedge^2\cU^\vee = \cE_{\omega_2}$ and $Hom(\cU,\cU^\perp/\cU) = \cU^\vee\otimes \mathcal{R}(-1) = \cE_{\omega_1 -\omega_3 + \omega_4}$.
Therefore, to compute the global sections of $T(OG)_{|X}$, we need to tensor the Koszul complex by $\bigwedge^2\cU^\vee$ and by $\cU^\vee\otimes \mathcal{R}(-1)$. 

For $\bigwedge^2\cU^\vee$, we obtain the following list of completely reducible vector bundles:
\begin{itemize}
    \item $\bigwedge^2\cU^\vee(-2) = \cE_{\omega_2 - 2\omega_3}$;
    \item $\bigwedge^2\cU^\vee \otimes \cS(-2) = \cE_{\omega_2 - 2\omega_3 + \omega_5}$;
    \item $\bigwedge^2\cU^\vee(-1) \oplus \bigwedge^2\cU^\vee \otimes \mathcal{R}(-2) = \cE_{\omega_2 - \omega_3} \oplus \cE_{\omega_2 - 2\omega_3 + \omega_4}$;
    \item $\bigwedge^2\cU^\vee \otimes \cS(-1) = \cE_{\omega_2 - \omega_3 + \omega_5}$;
    \item $\bigwedge^2\cU^\vee = \cE_{\omega_2}$.
\end{itemize}
Bott's theorem implies that the only non-vanishing cohomologies are:
\begin{itemize}
\item $H^1\left(OG,\bigwedge^2\cU^\vee\otimes \bigwedge^2\cS^\vee\right) = \mathbb{C}$;
\item $H^0\left(OG,\bigwedge^2\cU^\vee\right) = \bigwedge^2V_{11}^\vee$.
\end{itemize}
Similarly, for $\cU^\vee(-1) \otimes \mathcal{R}$ one deduces that the only non-trivial cohomology is $H^2\left(OG,\cU^\vee(-1) \otimes \mathcal{R}\otimes \bigwedge^2\cS^\vee\right) = \mathbb{C}$. In particular, we can compute the cohomology of the terms in the Koszul complex twisted by $T(OG)$. The only ones with non-trivial cohomology are $T(OG)\otimes \bigwedge^2\cS^\vee$, which satisfies
\[
0 \lra H^1\left(T(OG)\otimes \bigwedge^2\cS^\vee\right) \lra  \mathbb{C} \xrightarrow{\phi} \mathbb{C} \lra H^2\left(T(OG)\otimes \bigwedge^2\cS^\vee\right)  \lra 0,
\]
and $ T(OG)$, whose global sections are $\bigwedge^2V_{11}^\vee$. We have to take into account the behaviour of the map $\phi$. If $\phi$ is not zero, then $T(OG)\otimes \bigwedge^2\cS^\vee$ is acyclic and conditions (\ref{c1}) and (\ref{c11}) follow. If $\phi= 0$, we get an exact sequence $$ 0\lra \CC\lra H^0(T(OG))\lra H^0(T(OG)_{|X})\lra \CC \lra 0. $$ 
Let $\omega\in H^0(T(OG))= \wedge^2V_{11}^\vee$ a non-zero element that vanishes as a vector field on $X$. Under the action of $SL_5$, $\wedge^2V_{11}^\vee= E_5\oplus F_5\oplus \CC$ decomposes as
\[
\wedge^2V_{11}^\vee \simeq \wedge^2 E_{5} \oplus \wedge^2 E_{5}^\vee \oplus \mathfrak{sl}(E_5) \oplus \CC \oplus E_5 \oplus E_5^\vee .
\]
By $SL_5$-invariance, $\omega$ must be the generator of the one-dimensional summand which, in $\wedge^2V_{11}^\vee$ corresponds to $\mathrm{id}_{E_5} - \mathrm{id}_{F_5}$. Since $\omega$ vanishes on $X$, it does stabilize every three dimensional plane in $X$. Since this is not true for the element $P'$ introduced in the proof of Proposition \ref{OG311}, we get a contradiction.

\medskip
The verification of conditions (\ref{c2}) works as in the proof of Proposition \ref{rigidity}.
\qed 

\begin{prop}
$(OG(2,12), \cS_+)=Fl_6$
\end{prop}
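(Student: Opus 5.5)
I will follow the strategy of the proofs of Proposition \ref{fl134} and of $(OG(2,11),\cS)=Fl_5$. By \cite[Proposition 38]{sk}, the generic stabilizer of $\CC^*\times Spin_{12}$ acting on the $32$-dimensional half-spin representation $\Delta_+$ is locally isomorphic to $SL_6$. More precisely, for the explicit spinor $\delta=1+e_{123456}$, its Lie algebra is the standard $\fsl_6\subset\fso_{12}$ preserving the decomposition $\CC^{12}=E\oplus F$, with $E=\langle e_1,\ldots,e_6\rangle$ and $F=E^\vee$. Indeed, $\delta$ is the sum of the two pure spinors $1$ and $e_{123456}$, which correspond to the transverse maximal isotropic spaces $F$ and $E$. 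The stabilizer of $[\delta]$ must preserve this (unique) decomposition, so it lies in $\fgl(E)$, and the trace condition cuts it down to $\fsl(E)$.

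Next I will embed the adjoint variety of $SL_6$ into $X=(OG(2,12),\cS_+)$. As in the proof of Proposition \ref{fl134}, $OG(2,12)$ is the set of rank-two elements of $\fso_{12}$ with isotropic image. An element $A\in\fsl(E)$ acts on $E\oplus F$ by $\mathrm{diag}(A,-A^t)$. When $A=u\otimes v$ has rank one with $\langle u,v\rangle=0$, this element has rank two, and its image is the plane $\langle u, v\rangle$ with $u\in E$, $v\in F$. This plane is isotropic exactly because $\langle u,v\rangle=0$. This gives an $SL_6$-equivariant embedding $Fl_6\hookrightarrow OG(2,12)$, where $Fl_6=Fl(1,5;6)$ is the incidence variety, which has dimension $9$.

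I then check that this copy of $Fl_6$ lies in the zero locus of the section defined by $\delta$. By equivariance it is enough to check one point, namely the highest root point $\langle e_1,f_6\rangle$. By Lemma \ref{kills}, this reduces to $(e_1\wedge f_6).\delta=0$. Here $f_6$ contracts $1$ to zero and sends $e_{123456}$ to $\pm e_{12345}$, after which wedging with $e_1$ gives zero. So the orbit $Fl_6$ is contained in $X$.

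Finally I compare dimensions and connectedness. We have $\dim OG(2,12)=2\cdot 12-7=17$, and $\cS_+$ has rank $2^{6-2-1}=8$, so $\dim X=9=\dim Fl_6$. Since $\delta$ is general, $X$ is smooth of pure dimension $9$. It remains to show $X$ is connected, i.e.\ $h^0(\cO_X)=1$, using the Koszul complex $\wedge^\bullet\cS_+^\vee$ together with Bott's theorem. This is the main obstacle: we need $H^i(OG(2,12),\wedge^i\cS_+^\vee)=0$ for $1\le i\le 8$. I would decompose each $\wedge^i\cS_+^\vee$ into irreducible homogeneous bundles, viewing it as an exterior power of a half-spin representation of the Levi factor $\fso_8$ twisted by powers of $\cL$, and check that all of them are acyclic, presumably with a Macaulay2 computation as for the other cases. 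Once $X$ is connected, the closed $9$-dimensional subvariety $Fl_6$ is a component of the smooth connected $X$, hence $X=Fl_6$.

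As an alternative that avoids Bott's theorem, I could also argue by adjunction. $X$ has canonical class $-10\cL+4\cL=-6\cL$, so each connected component is a Fano $9$-fold of index $6$ carrying an $SL_6$-action. Only the components containing the closed orbit are then forced, so this route still needs connectedness or a degree comparison. I would use the Koszul route.
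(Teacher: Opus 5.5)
Your proposal is correct and is essentially the paper's proof: the stabilizer $SL(E)$ of $\delta=1+e_{123456}$, the embedding of the adjoint variety $Fl_6\hookrightarrow OG(2,12)$ through rank-one elements of $\fsl(E)$, and a one-point check via Lemma~\ref{kills}, with the connectedness step made explicit as in Proposition~\ref{fl134}; the vanishing you need does hold, since for $1\le i\le 8$ every summand of $\wedge^i\cS_+^\vee$ has a highest weight $\mu$ with $\mu+\rho$ singular, so all these bundles are acyclic by Bott's theorem. One slip in your discarded adjunction remark: $OG(2,12)$ has index $9$, not $10$, so $K_X=-5\cL$, which is consistent with $Fl_6$ having index $5$.
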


\proof A generic spinor in $\Delta_+$ is $\delta=1+e_{123456}$, which is the sum of two pure spinors corresponding to the maximal isotropic spaces $E$ and $F$. The stabilizer of $[\delta]$ in $Spin_{12}$ fixes the pair $(E,F)$, so its connected component fixes both $E$ and $F$. Hence it maps naturally to $GL(E)$ and $GL(F)$ with the same determinant since it must multiply $1$ and $e_{123456}$ by the same factor. But these two actions are transpose one to the other, so in fact the connected component of the stabilizer naturally identifies to $SL(E)$, as checked in the proof of \cite[Proposition 37]{sk}. 

The adjoint representation of $Spin_{12}$,  restricted to $SL_6$,  decomposes as 
$$\fso_{12}=\fgl_6\oplus \wedge^2E\oplus \wedge^2E^\vee .$$
In particular, the adjoint variety $OG(2,V_{12})$ of $\fso(V_{12})$ contains the
adjoint variety $Fl_6$ of $\fsl_6$: to a flag $(\ell\subset H=\phi^\perp\subset E)$ we simply associate the orthogonal plane $\langle \ell,\phi\rangle $ in $V_{12}$. Since $\phi(\ell)=0$, it is easy to see that this plane belongs to 
$(OG(2,12), \cS_+)$, and the conclusion follows.
\qed

\begin{prop}\label{OG312}
$(OG(3,12), \cS_+)$ is a  Fano manifold of dimension $17$, index $6$ and Picard number one, with a quasi-homogeneous action of $SL_6$.
\end{prop}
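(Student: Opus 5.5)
We first record the numerical invariants. $OG(3,12)$ has dimension $3(12-3)-\binom{4}{2}=27-6=21$, and $\cS_+$ has rank $2^{6-3-1}=4$, so $X=(OG(3,12),\cS_+)$ has dimension $17$. The bundle $\cS_+$ is globally generated and $\Delta_+$ is its space of sections, so a general section has smooth zero locus of the expected dimension. The index of $OG(3,12)$ is $12-3-1=8$. Since $c_1(\cS_+)=\frac12\cdot 4\,\cL=2\cL$, adjunction gives $K_X=-(8-2)\cL_{|X}$, hence index $6$. For the Picard number we use the Koszul complex $0\to\wedge^4\cS_+^\vee\to\cdots\to\cS_+^\vee\to\cO\to\cO_X\to0$. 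Since $\dim X=17\ge 3$ and $\cS_+$ is ample enough, a Lefschetz-type argument applies, namely Sommese's theorem for zero loci of ample bundles: $\cS_+$ is a quotient of the trivial bundle $\Delta_+\otimes\cO$ and is in fact ample on $OG(3,12)$ (its restriction to lines is positive). This gives $\rho_X=1$. Alternatively, one can compute $h^{1,1}(X)=1$ via the conormal sequence and Bott's theorem, as in the Betti number computations. Connectedness follows from $h^0(\cO_X)=1$, which the same Koszul complex and Bott vanishing give.

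For quasi-homogeneity we follow the pattern of Proposition \ref{OG311}. Take the generic spinor $\delta=1+e_{123456}$. By the proof of the preceding proposition, the connected stabilizer of $[\delta]$ is $SL(E)=SL_6$, acting on $V_{12}=E\oplus E^\vee$ by $A\oplus(-A^t)$. This $SL_6$ acts on $X_\delta$, and $\dim SL_6=35$, so we need a point $[P]\in X_\delta$ whose stabilizer in $\fsl_6$ has dimension exactly $35-17=18$. The natural candidate is the plane used for $(OG(3,11),\cS)$ before it was modified, presumably $P=\langle e_1-f_6,\ e_2+f_5,\ e_3+f_4\rangle$ or a variant mixing $e$'s and $f$'s with signs. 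We first check that $P$ is isotropic. We then check that $\omega_P.\delta=0$ by expanding the Clifford action of $(e_1-f_6)(e_2+f_5)(e_3+f_4)$ on $1$ and on $e_{123456}$, and choose signs so that the two contributions cancel. By Lemma \ref{kills}, this places $[P]$ in $X_\delta$.

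Next we compute $\fs_P=\{A\in\fsl_6: (A\oplus -A^t)(P)\subset P\}$. Writing $P$ as the graph of a map between $\langle e_1,e_2,e_3\rangle$ and $\langle f_4,f_5,f_6\rangle$ turns this into linear conditions on the block decomposition of $A$ with respect to $E=\langle e_1,e_2,e_3\rangle\oplus\langle e_4,e_5,e_6\rangle$. The upper-right block (mapping $\langle e_4,e_5,e_6\rangle$ into $\langle e_1,e_2,e_3\rangle$) is free, giving $9$ dimensions. The lower-left block must vanish. The diagonal blocks must be related by an antitranspose compatibility condition, giving $\fgl_3$ minus the trace condition, that is $9$ more dimensions, for a total of $18$. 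If this count holds, the orbit has dimension $17$ and is therefore dense. The main obstacle is finding $P$: a careless choice lies in a smaller orbit, so the isotropy count must be checked explicitly and $P$ adjusted if the dimension comes out larger. As a fallback, one can deduce the result from the $Spin_{11}$ point $P'$ of Proposition \ref{OG311} by an argument reversing the $12\to11$ restriction.
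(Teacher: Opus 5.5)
Your quasi-homogeneity argument is the paper's. The paper takes $P=\langle e_1+f_6,e_2+f_5,e_3+f_4\rangle$ and checks $\omega_P.\delta=0$ with Lemma~\ref{kills}. It then finds an $18$-dimensional stabilizer in $\fsl_6$ of exactly the block shape you describe ($A_{21}=0$, $A_{12}$ free, $A_{22}$ determined by $A_{11}$). The trace condition is automatic, so the diagonal part is a full $\fgl_3$. This gives a $17$-dimensional dense orbit.

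Only the sign needs care. With $E$ acting by wedge product and $F$ by contraction, your first guess $\langle e_1-f_6,e_2+f_5,e_3+f_4\rangle$ gives $\omega_P.1=\omega_P.e_{123456}=e_{123}$, hence $\omega_P.\delta=2e_{123}\neq 0$. For $e_1+f_6$ the second term becomes $-e_{123}$ and the sum vanishes. The dimension count is unaffected, since the graph of any isomorphism $\langle e_1,e_2,e_3\rangle\to\langle f_4,f_5,f_6\rangle$ is isotropic with an $18$-dimensional stabilizer.

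Your Picard number argument, however, contains a false step: $\cS_+$ is not ample on $OG(3,12)$. It has rank $4$ and $c_1(\cS_+)=2\cL$, so its degree on a line is only $2$. Explicitly, along the line $U_t=\langle e_1,e_2,se_3+te_4\rangle$ the fiber is $\wedge^{even}(E/U_t)$ with $E/U_t\simeq\cO(1)\oplus\cO^2$, so $\cS_+$ restricts to $\cO^2\oplus\cO(1)^2$. Hence Sommese's theorem for ample bundles cannot be invoked as you state it.

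Your alternative is sound, and it is what the paper actually relies on. The Koszul complex, the conormal sequence and Bott's theorem give $b_2(X)=h^{1,1}(X)=1$, the second of the Betti numbers listed for this proposition. Since $X$ is Fano, this equals $\rho_X$.

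The Lefschetz route can also be repaired. The largest fibers of the tautological map from the projectivization of $\cS_+$ to $\PP^{31}$ are the Grassmannians $G(3,W)$, with $W$ the maximal isotropic space of a pure spinor. So $\cS_+$ is $9$-ample, and the $k$-ample version of Sommese's theorem gives $H^i(OG(3,12),X;\mathbb{Z})=0$ for $i\le 21-4-9=8$.
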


\noindent  {\it Betti numbers:} $(1,1,2,4,6,8, -,-,-,-,-,-,8,6,4,2,1,1)$.

\proof 
We claim that the $3$-plane $P=\langle e_1+f_6, e_2+f_5, e_3+f_4\rangle$ belongs to $X$, as a straightforward application of Lemma \ref{kills}. We can easily compute its stabilizer in $\fsl_6$ to be the space of matrices of the form
\begin{align}\label{stabSL6}
\begin{pmatrix}
a_{11} &a_{12} &a_{13} & b_{11} &b_{12} &b_{13} \\
a_{21} &a_{22} &a_{23} & b_{21} &b_{22} &b_{23} \\
a_{31} &a_{32} &a_{33} & b_{11} &b_{12} &b_{13} \\
0&0&0&-a_{11} &-a_{21} &-a_{31}\\
0&0&0&-a_{12} &-a_{22} &-a_{32}\\
0&0&0&-a_{13} &-a_{23} &-a_{33}
\end{pmatrix}
\end{align}
This has dimension $18$, so the $SL_6$-orbit of $P$ has dimension
$35-18=17$, hence must be open in $X$. \qed 

\medskip \noindent {\it Remark.} The previous computation shows that the generic stabilizer of the action of $SL_6$ on $(OG(3,12), \cS_+)$ is locally isomorphic to $GL_3\rtimes U_9$, where $GL_3$ acts on the unipotent $U_9$ through the adjoint representation.

\begin{prop}\label{rigidityOG312}
$(OG(3,12), \cS_+)$ is locally rigid.
\end{prop}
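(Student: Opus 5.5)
We follow the pattern of Propositions \ref{rigidity} and \ref{rigidityOG311}: verify conditions (\ref{c1}), (\ref{c11}) and (\ref{c2}) using the Koszul complex of the section $s$, and deal with any unexpected non-vanishing through the explicit stabilizer computed in Proposition \ref{OG312}.

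Set $OG:=OG(3,12)$. The spinor bundle $\cS_+$ has rank $4$ and determinant $\cO_{OG}(2)$, so the Koszul complex is
$$0\lra \cO_{OG}(-2)\lra \cS_+(-2)\lra \wedge^2\cS_+^\vee\lra \cS_+^\vee\lra \cO_{OG}\lra\cO_X\lra 0 .$$
Here $\wedge^2\cS_+$ splits as $\cO(1)\oplus\mathcal R$, with $\mathcal R$ irreducible; in type $D_6$, with $\cS_+=\cE_{\omega_6}$, we expect $\mathcal R=\cE_{\omega_4}$. The tangent bundle is again the extension of $\wedge^2\cU^\vee=\cE_{\omega_2}$ by $Hom(\cU,\cU^\perp/\cU)=\cU^\vee\otimes\mathcal R(-1)$. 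We twist the Koszul complex by each of these two pieces and write every term as a sum of irreducible bundles $\cE_\lambda$. We then apply Bott's theorem to each term; in practice we run the same \textit{Macaulay2} routine as in the other cases. For condition (\ref{c2}) we twist by $\cS_+$. There we expect only $H^0(\cS_+\otimes\cS_+^\vee)=\CC$ and $H^0(\cS_+)=\Delta_+$, which gives $H^0(X,\cS_{+|X})=\Delta_+/\CC s$ exactly as before.

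The main obstacle is the step that makes this case special. By analogy with $OG(3,11)$, we expect Bott's theorem to produce some nonzero cohomology in the middle term $T(OG)\otimes\wedge^2\cS_+^\vee$. The likely sources are a class $H^1(\wedge^2\cU^\vee\otimes\wedge^2\cS_+^\vee)=\CC$ and a class $H^2(\cU^\vee\otimes\mathcal R(-1)\otimes\wedge^2\cS_+^\vee)=\CC$, joined by a connecting map $\phi:\CC\to\CC$. This term sits two steps from the end of the Koszul complex. So if $\phi\neq 0$, everything is acyclic and (\ref{c1}), (\ref{c11}) hold. If $\phi=0$, the spectral sequence gives
$$0\lra\CC\lra\wedge^2V_{12}^\vee\lra H^0(X,T(OG)_{|X})\lra\CC\lra 0.$$
This means some nonzero $\omega\in\fso_{12}$ would vanish as a vector field on $X$. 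In that case, $H^1(X,T(OG)_{|X})$ would still vanish, provided the extra class lands only in $H^0$; we will check this degree placement carefully. To exclude $\phi=0$, we use that the copy of $\CC$ is $SL_6$-invariant, since $s$ is $SL_6$-invariant up to scalar.

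Restricted to $SL_6$, $\fso_{12}$ decomposes as $\fsl_6\oplus\CC\oplus\wedge^2E\oplus\wedge^2E^\vee$. Hence $\omega$ must be the invariant element $\mathrm{id}_E-\mathrm{id}_F$. This element would have to stabilize every $3$-plane in $X$. But it does not preserve $P=\langle e_1+f_6,e_2+f_5,e_3+f_4\rangle$ from the proof of Proposition \ref{OG312}: it sends $e_1+f_6$ to $e_1-f_6$, which is not in $P$. This is a contradiction, so $\phi\neq0$. Conditions (\ref{c1}) and (\ref{c11}) then hold, and the argument of the section on local rigidity gives $H^1(X,TX)=0$.

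If instead the Bott computation puts the invariant class in a different degree, for example contributing to $H^1(T(OG)_{|X})$, the same invariance argument applies to the relevant connecting morphism. In that case it would show that the map $\fso_{12}\to\Delta_+/\CC s$ is still surjective, since the orbit of $[s]$ is open.
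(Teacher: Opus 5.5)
Your proposal is correct and follows the paper's proof essentially verbatim. It uses the same Koszul complex and the same two Bott classes $H^1(\wedge^2\cU^\vee\otimes\wedge^2\cS_+^\vee)=\CC$ and $H^2(\cU^\vee\otimes\wedge^2\cS_+(-1)\otimes\wedge^2\cS_+^\vee)=\CC$ joined by $\phi$, and it excludes $\phi=0$ the same way, through the $SL_6$-invariant element $\mathrm{id}_E-\mathrm{id}_F$, which does not preserve $P$. Two small corrections: on $OG(3,12)$ the bundle $\wedge^2\cS_+$ does not split off $\cO(1)$ as on $OG(3,11)$, because the spinor representation of $\fso_6\simeq\fsl_4$ has no invariant $2$-form, unlike that of $\fso_5\simeq\fsp_4$; it is itself the irreducible rank-six bundle $\cE_{\omega_4}$, which leaves your conclusions unchanged. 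Also, your closing contingency paragraph should be dropped: the classes land in the degrees you anticipated, and its argument would not control a nonzero $H^1(X,T(OG)_{|X})$ anyway.
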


\proof Let $X$ be the zero locus of a general section $s \in H^0(OG(3,12),\cS_+)$ and set $OG := OG(3,12)$. We start as in the proof of Proposition \ref{rigidity}. Since $\cS_+$ has rank $4$, the Koszul complex takes the following form
\[
0\lra \cO_{OG}(-2) \lra \cS_+(-2)  \lra \bigwedge^2\cS_+^\vee \lra \cS_+^\vee \lra \cO_{OG}\lra \cO_X \lra 0.
\]
The tangent bundle of $OG$ can be represented as an extension 
\[
0\lra Hom(\cU,\cU^\perp/\cU)\lra T(OG)\lra \bigwedge^2\cU^\vee\lra 0.
\]
In terms of irreducible vector bundles, we have $\cU^\vee = \cE_{\omega_1}$, $\bigwedge^2\cU^\vee = \cE_{\omega_2}$ and $Hom(\cU,\cU^\perp/\cU) = \cU^\vee\otimes \bigwedge^2\cS_+(-1) = \cE_{\omega_1 -\omega_3 + \omega_4}$.

$$\dynkin[labels={\cU^\vee, \wedge^2\cU^\vee,, ,\cS_+=\cE_{\omega_6}, \cS_-=\cE_{\omega_5}}, edge length = 1cm] D{ooo*oo}$$

\medskip
Therefore, to compute the global sections of $T(OG)_{|X}$, we need to tensor the Koszul complex by $\bigwedge^2\cU^\vee$ and by $\cU^\vee\otimes \bigwedge^2\cS_+(-1)$. 
For $\bigwedge^2\cU^\vee$, we obtain the following list of irreducible vector bundles:
\begin{itemize}
    \item $\bigwedge^2\cU^\vee(-2) = \cE_{\omega_2 - 2\omega_3}$;
    \item $\bigwedge^2\cU^\vee \otimes \cS_+(-2) = \cE_{\omega_2 - 2\omega_3 + \omega_6}$;
    \item $\bigwedge^2\cU^\vee \otimes \bigwedge^2\cS_+ (-2)  = \cE_{\omega_2 - 2\omega_3 +\omega_4} $;
    \item $\bigwedge^2\cU^\vee \otimes \cS_-(-1) = \cE_{\omega_2 - \omega_3 + \omega_5}$;
    \item $\bigwedge^2\cU^\vee = \cE_{\omega_2}$.
\end{itemize}
Bott's theorem implies that the only non-vanishing cohomologies are:
\begin{itemize}
\item $H^1\left(OG,\bigwedge^2\cU^\vee\otimes \bigwedge^2\cS_+(-2)\right) = \mathbb{C}$;
\item $H^0\left(OG,\bigwedge^2\cU^\vee\right) = \bigwedge^2V_{12}^\vee$.
\end{itemize}
Similarly, for $\cU^\vee\otimes \bigwedge^2\cS_+(-1)$ one deduces that the only non-trivial cohomology is $H^2\left(OG,\cU^\vee\otimes \bigwedge^2\cS_+(-1)\otimes \bigwedge^2\cS_+^\vee\right) = \mathbb{C}$. In particular, we can compute the cohomology of the terms in the Koszul complex twisted by $T(OG)$. The only one with non-trivial cohomology is $T(OG)\otimes \bigwedge^2\cS_+^\vee$, for which we get a long exact sequence 
\[
0 \lra H^1\left(T(OG)\otimes \bigwedge^2\cS_+^\vee\right) \lra  \mathbb{C} \xrightarrow{\phi} \mathbb{C} \lra H^2\left(T(OG)\otimes \bigwedge^2\cS_+^\vee\right)  \lra 0,
\]
and $T(OG)$, whose space of global sections is $\bigwedge^2V_{12}^\vee$. We have to take into account the behaviour of the map $\phi$. If $\phi$ is not zero, then $T(OG)\otimes \bigwedge^2\cS_+^\vee$ is acyclic and conditions (\ref{c1}) and (\ref{c11}) follow. If $\phi= 0$ we get an exact sequence $$0\lra\CC\lra H^0(T(OG))\lra H^0(T(OG)_X)\lra \CC \lra 0. $$
Let $\omega\in H^0(T(OG))= \wedge^2V_{12}^\vee$ a non-zero element that vanishes as a vector field on $X$. Under the action of $SL_6$, $\wedge^2V_{12}^\vee= \wedge^2(E\oplus E^\vee)$ decomposes as
\[
\wedge^2V_{12}^\vee \simeq \wedge^2 E \oplus \wedge^2E^\vee \oplus \mathfrak{sl}(E) \oplus \CC 
\]
By $SL_6$-invariance, $\omega$ must be the generator of the one-dimensional summand which, in $\wedge^2V_{12}^\vee$ corresponds to $\mathrm{id}_{E} - \mathrm{id}_{F}$. Since $\omega$ vanishes on $X$, it does stabilize every three dimensional plane in $X$. But this is not true for the element $P$ introduced in the proof of Proposition \ref{OG312}, and we get a contradiction.

\smallskip
Condition (\ref{c2}) is verified as in the proof of Proposition \ref{rigidity}.
\qed

\begin{prop}
$(OG(4,12), \cS_+)$ is a quasi-homogeneous Fano manifold of dimension $20$, index $6$ and Picard number one.
\end{prop}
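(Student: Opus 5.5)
First I would settle the numerical invariants. $OG(4,12)$ has dimension $4(12-4)-\binom{5}{2}=32-10=22$, wait: $\dim OG(k,2n)=k(2n-k)-\binom{k+1}{2}=4\cdot 8-10=22$. The spinor bundle $\cS_+$ has rank $2^{6-4-1}=2$, so $X=(OG(4,12),\cS_+)$ has dimension $20$. By the formula $c_1(\cS_+)=\frac12(\mathrm{rank}\,\cS_+)\cL=\cL$ and $c_1(OG(k,2n))=(2n-k-1)\cL=7\cL$, adjunction gives index $7-1=6$. The Picard number one statement follows from Lefschetz-type arguments, or from the Koszul complex $0\to\cO(-1)\to\cS_+^\vee\to\cO\to\cO_X\to 0$ combined with Bott vanishing, which yields $h^1(\Omega_X)=1$ (this is part of the Betti computation anyway); smoothness and connectedness come from global generation and $h^0(\cO_X)=1$.

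For quasi-homogeneity I would follow the pattern of Proposition \ref{OG312}. Take $\delta=1+e_{123456}$, whose stabilizer has Lie algebra $\fsl(E)$ acting on $V_{12}=E\oplus E^\vee$. I would look for an isotropic $4$-plane $P$ such that $\omega_P$ kills $\delta$ (Lemma \ref{kills}), and which is ``mixed'' enough to have small stabilizer. A natural first guess extends the plane used for $OG(3,12)$: $P=\langle e_1+f_6,\,e_2+f_5,\,e_3+f_4,\,v\rangle$ with $v$ chosen in the orthogonal of the first three vectors, for example $v=e_4-f_3$ or a combination such as $e_4+e_5-f_2-f_3$. I would then check isotropy and verify that $\omega_P\cdot\delta=0$, by checking that $e_{\omega_P}\wedge\delta$ and the contractions vanish. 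Note that $1$ is killed by any $\omega_P$ containing an $f$-component in the wedge, and $e_{123456}$ is killed when an $e$-component appears.

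Then I would compute the stabilizer of $P$ in $\fsl_6$. This is a linear-algebra computation: impose that $A\oplus(-A^t)$ maps $P$ into $P$. The target is dimension $35-20=15$. If this holds, the $SL_6$-orbit is $20$-dimensional, hence open in the irreducible $X$.

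The main obstacle is finding a sufficiently generic $P$. Naive symmetric choices like the one above tend to have stabilizers that are too large, for instance containing a $\fgl$ block, and so give orbits of dimension less than $20$. If the first guess fails, I would parametrize $P$ by a general point in a chart of $X$ and compute the stabilizer dimension symbolically, or with \textit{Macaulay2}, at a random rational point. This confirms that the generic stabilizer has dimension $15$, after which I would look for a nice explicit representative.
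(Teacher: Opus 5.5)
This is the paper's own argument: the same spinor $\delta=1+e_{123456}$ with stabilizer $\fsl_6$, the same first three vectors $e_1+f_6,e_2+f_5,e_3+f_4$ (completed there by $e_4+e_5+e_6-f_1-f_2-f_3$), and the same orbit count; your target $35-20=15$ is the right one, whereas the paper's text announces a $13$-dimensional stabilizer and an orbit of dimension $22>\dim X$, which is a slip, since a direct count for its plane also gives $15$. The step you leave open is easy: every isotropic $Q\supset P$ lies in $X$ because $\delta\in P\cdot\Delta_-\subset Q\cdot\Delta_-$ (for these planes neither $1$ nor $e_{123456}$ is killed on its own, so your remark about that is not how it works; their images $\pm e_{123}$ under $\omega_P$ cancel), and your first guess $v=e_4-f_3$ already works, since for $A=(A_{ij})\in\fsl(E)$ acting on $F$ by $-A^t$ the stabilizer of $Q$ is cut out by $A_{7-j,7-i}=-A_{ij}$ for $i,j\le 3$, $A_{31}=A_{32}=0$, $A_{35}=A_{24}$, $A_{36}=A_{14}$, and $A_{ij}=0$ for $i\ge 4$, $j\le 3$, $(i,j)\neq(4,3)$, hence has dimension $7+7+1=15$.
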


\noindent  {\it Betti numbers:} $(1,1,3,4,7,9,13,-,-,-,-,-,-,-,13,9,7,4,3,1,1)$. 

\proof We proceed as before by identifying the stabilizer in $\fsl_6$ of a $4$-plane in $X$. 
We choose 
$$Q=\langle e_1+f_6, e_2+f_5, e_3+f_4, e_4+e_5+e_6-f_1-f_2-f_3\rangle ,$$
and check that its stabilizer is again a space of matrices of the form (\ref{stabSL6}), with the extra conditions that the matrix with entries $a_{ij}-b_{ij}$ has its row sums and column sums all equal to zero. These gives $5$ independent conditions and shows that the stabilizer of $Q$ has dimension $13$. So its orbit has dimension $35-13=22$, and must be open in $X$. \qed 

\begin{prop}
$(OG(6,12)_-, \cS_+)=G(3,6)$
\end{prop}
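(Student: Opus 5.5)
The plan is to argue as in Proposition \ref{OG27}, Proposition \ref{fl134} and $(OG(2,12),\cS_+)=Fl_6$: find an explicit copy of $G(3,6)$ inside $X=(OG(6,12)_-,\cS_+)$, then conclude by a dimension and irreducibility argument. We first check the numerics. $OG(6,12)_-$ has dimension $15$. The spinor bundle $\cS_+$ on it is a twist of the tautological rank $6$ bundle, with determinant $4$ times the hyperplane class. The index of $OG(6,12)_-$ is $10$, so by adjunction $X$ is a Fano ninefold of index $6$. This agrees with $G(3,6)$, which has dimension $9$ and index $6$.

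For the construction we take the generic spinor $\delta=1+e_{123456}\in\Delta_+$. Its stabilizer has connected component $SL(E)=SL_6$, as in the proof for $(OG(2,12),\cS_+)$. To each $3$-plane $A\subset E$ we attach the maximal isotropic space $L_A=A\oplus A^\perp$, where $A^\perp\subset F=E^\vee$ is the annihilator of $A$. Since $\dim(L_A\cap E)=3$ is odd, $L_A$ lies in the family opposite to $E$, which is $OG(6,12)_-$ after the appropriate normalization. The map $A\mapsto L_A$ is an $SL_6$-equivariant closed embedding $G(3,6)\hookrightarrow OG(6,12)_-$.

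Next we check that $\delta$ vanishes as a section of $\cS_+$ along this image. We describe the fibre of $\cS_+$ at $[L]$ for $L$ maximal isotropic of the opposite parity. It is the quotient of $\Delta_+$ by the spinors of the form $\sum u_i.\delta_i$ with $u_i\in L$, which is the analogue of Lemma \ref{kills}; equivalently, $s$ vanishes at $[L]$ when $\omega_L.\delta=0$. We take $A=\langle e_1,e_2,e_3\rangle$, so that $L_A=\langle e_1,e_2,e_3,f_4,f_5,f_6\rangle$. Then $1=f_4.(e_4)$ up to sign, and $e_{123456}=e_1.(e_{23456})$, so $\delta$ lies in $L_A.\Delta_-$. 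By $SL_6$-equivariance the same holds at every $L_A$. This gives $G(3,6)\subset X$.

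It remains to show equality. Both varieties have dimension $9$. $X$ is smooth because $\cS_+$ is globally generated and $s$ is general, and it is connected because the Koszul complex gives $h^0(\cO_X)=1$. So $X$ is irreducible, and the closed nine-dimensional subvariety $G(3,6)$ must coincide with it. The main obstacle is this connectedness check, which needs the vanishing of $H^{i}(OG(6,12)_-,\wedge^{i}\cS_+^\vee)$ for $1\le i\le 6$ via Bott's theorem. If that computation is awkward, an alternative is to compare degrees or to use the $SL_6$-action directly. Since $SL_6$ acts on $X$ and $G(3,6)$ is a component, we would show that no other $SL_6$-orbit of dimension $9$ meets the zero locus, using the orbit structure of $SL_6$ on $OG(6,12)_-$, which is given by $\dim(L\cap E)$ and $\dim(L\cap F)$.
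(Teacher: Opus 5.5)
Your route is the paper's: embed $G(3,6)$ by $A\mapsto A\oplus A^\perp$, show that $\delta=1+e_{123456}$ vanishes along it, and conclude by dimension. Your connectedness step, which the paper leaves implicit, works: $\wedge^i\cS_+^\vee\simeq\wedge^i\cU^\vee(-i)$ is acyclic for $1\le i\le 6$ by Bott's theorem, since $\lambda+\rho$ always contains a pair of entries $a,-a$, so $h^0(\cO_X)=1$.

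The genuine gap is the vanishing criterion. For $k=n$ the fibre of $\cS_\pm$ is \emph{not} the cokernel of $L\otimes\Delta_\mp\to\Delta_\pm$; this is why Lemma \ref{kills} stops at $k<\lfloor m/2\rfloor$. Write $V=L\oplus L'$ and $\Delta=\wedge^\bullet L'\cdot\delta_L$, where $\delta_L\in\Delta_-$ is the pure spinor of $L\in OG(6,12)_-$ and $L$ acts by contractions. Then $\Delta_+=\wedge^{odd}L'\cdot\delta_L$. Contraction by $L$ maps $\wedge^{even}L'$ onto $\wedge^{odd}L'$, so $L.\Delta_-=\Delta_+$. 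Also $\omega_L$ is nonzero only on $\wedge^6L'\cdot\delta_L\subset\Delta_-$, so $\omega_L.\Delta_+=0$. Hence your ``fibre'' is zero, not of rank $6$, and both of your conditions hold for every spinor at every point of $OG(6,12)_-$. The computation $\delta\in L_A.\Delta_-$ therefore proves nothing. The paper's own one-line check $\omega_P.\delta=0$ is vacuous for the same reason, so you cannot lean on it.

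The repair is short. The $P_L$-stable filtration $\wedge^1L'\cdot\delta_L\subset\wedge^{\le 3}L'\cdot\delta_L\subset\Delta_+$ shows that the fibre of $\cS_+$ at $[L]$ is $\wedge^5L'\cdot\delta_L$, so $\cS_+\simeq\cU(1)$. The kernel of evaluation is $\wedge^{\le 3}L'\cdot\delta_L=\wedge^2L.\Delta_+$. This is also the orthogonal of $V.\delta_L$ for the invariant form on $\Delta_+$, so $\delta$ vanishes at $[L]$ iff $\delta\perp V.\delta_L$.

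For $L_A=\langle e_1,e_2,e_3,f_4,f_5,f_6\rangle$ one has $\delta_{L_A}=e_{123}$ and $V.e_{123}=\langle e_{12},e_{13},e_{23},e_{1234},e_{1235},e_{1236}\rangle\subset\wedge^2E\oplus\wedge^4E$. This space is orthogonal to $1+e_{123456}$, because the form pairs $\wedge^jE$ with $\wedge^{6-j}E$. Equivalently, $1$ and $e_{123456}$ both lie in $\wedge^3L'_A\cdot e_{123}$, where $L'_A=\langle f_1,f_2,f_3,e_4,e_5,e_6\rangle$. With this corrected check, $SL_6$-equivariance and your connectedness argument give $X=G(3,6)$.
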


\proof First observe that $(OG(6,12)_-, \cS_+)$ is a priori a Fano ninefold of index $6$. It is acted on 
by the stabilizer of a generic $\delta\in\Delta_+$, locally isomorphic to $SL_6$. The natural representation of $SO_{12}$ gets decomposed into $E\oplus F$, with $F\simeq E^\vee$. This allows to embed $G(3,6)$ into $OG(6,12)$ by mapping $L\in G(3,E)$ to $P=L\oplus L^\perp$. Since these spaces have odd dimensional intersection with $E$, they belong to the other family $OG(6,12)_-$. A Pl\"ucker representative of $P$ is $\omega_P=\omega_{L^\perp}\wedge \omega_L$. The action of  $\omega_L$ on the spinor $\delta$ sends it to  $\omega_L\in\wedge^3L$, which is clearly killed by contraction with  $\omega_{L^\perp}$. Therefore, $G(3,6)$ is embedded into $(OG(6,12)_-, \cS_+)$, and this concludes the proof. \qed  

\begin{prop}
$(OG(2,14), \cS_+)=G_2/P_2\cup G_2/P_2$
\end{prop}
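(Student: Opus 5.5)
Let $X=(OG(2,14),\cS_+)$ be the zero locus of a general section. First I record the numerical invariants. $OG(2,14)$ has dimension $2(14-2)-3=21$. $\cS_+$ has rank $2^{7-2-1}=16$ and $c_1(\cS_+)=8\cL$. So $X$ has dimension $5$ and canonical class $K_X=(-11+8)\cL=-3\cL$.

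By Table 1 the stabilizer of a generic $[\delta]\in\PP(\Delta_+)$ contains $G_2\times G_2$, so this group acts on $X$. The smallest nontrivial projective $G_2$-varieties have dimension $5$, namely $\QQ^5$ and $G_2/P_2$. Hence every $G_2\times G_2$-orbit in $X$ is either a point or has dimension at least $5$, and in the latter case one factor acts trivially on the orbit. So the plan is to find two $G_2\times G_2$-orbits, each isomorphic to $G_2/P_2$, each contained in $X$, and disjoint. Since $\dim X=5$, each is then a connected component of $X$. It remains to show there are no further components.

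\textbf{Step 1 (the orbits).} I would use the explicit model of the generic stabilizer from \cite{sk}: $\fg_2\oplus\fg_2\subset\fso_{14}$ sits inside $\fso_7\oplus\fso_7\subset\fso(V_7\oplus V_7')$, with $V_{14}=V_7\oplus V_7'$ an orthogonal decomposition. Restricted to $\fso_7\oplus\fso_7$, the adjoint representation contains $\fso_7\oplus\fso_7$, and hence contains $\fg_2\oplus\fg_2$ in each factor. The highest root vector of the first $\fg_2$ is a rank-two element of $\fso(V_7)$ with isotropic image. It therefore defines a point of $OG(2,V_7)\subset OG(2,14)$, and its $G_2$-orbit is the adjoint variety $G_2/P_2$. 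The same holds for the second factor with $V_7'$. These two orbits are disjoint, because the planes lie in $V_7$ and in $V_7'$ respectively.

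To show both orbits lie in $X$, by Lemma \ref{kills} and equivariance it suffices to check that $\omega_P\cdot\delta=0$ for a single plane $P$ in each orbit. I would do this with the explicit spinor given in \cite{sk}, in the same way as for Proposition \ref{OG27}, where the highest root space of $\fso_7$ was read off from the matrix model.

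\textbf{Step 2 (no other components).} Compute $h^0(\cO_X)$ via the Koszul complex $\wedge^j\cS_+^\vee$ and Bott's theorem, as in the proof of Proposition \ref{fl134}, expecting the answer $h^0(\cO_X)=2$. This is the main obstacle. With rank $16$ there are many exterior powers to decompose, and the expected value $2$ must come from a nonzero $H^{j}(\wedge^{j}\cS_+^\vee)$ with $j>0$. So a Macaulay2 computation will be needed.

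As an alternative, I would compute the degree. With $\cL$ as above, $\int_X\cL^5=c_{16}(\cS_+)\cdot\cL^5$, computed by Schubert calculus. This should equal $2\deg(G_2/P_2)=36$, the degree of $G_2/P_2$ in its adjoint embedding being $18$, since $\cL$ restricts to the adjoint polarization. Matching degrees rules out any additional component.

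Either route confirms the consistency check from Step 1: $X$ is smooth, and $G_2/P_2$ is a Fano fivefold of index $3$, which matches $K_X=-3\cL$ on each component.
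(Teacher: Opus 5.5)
Your proposal is correct. Its skeleton matches the paper's: split $V_{14}=V_7\oplus V_7'$ and exhibit two disjoint adjoint varieties $G_2/P_2$, one inside $OG(2,V_7)$ and one inside $OG(2,V_7')$. The difference lies in how the two key steps are handled.

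\textbf{Containment.} The paper invokes the multiplicative double point property of \cite{am}: $\Delta_+\simeq\Delta_8\otimes\Delta_8'$ with $\delta=\chi\otimes\chi'$. For $P\subset V_7$, the element $\omega_P$ acts through the first factor only, so $\omega_P\cdot\delta=(\omega_P\cdot\chi)\otimes\chi'$. Hence $X\cap OG(2,V_7)$ is exactly the zero locus of $\chi$ in $OG(2,7)$, which is $G_2/P_2$ by Proposition \ref{G2P2}. This one-line reduction replaces your explicit check on the spinor of \cite{sk}. Your check would succeed, but you leave it undone.

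\textbf{Exhaustiveness.} Your Step 2 treats a point the paper passes over: it simply says ``we will be done'' once the two copies lie in $X$. So this is a genuine addition. You do not need Macaulay2 or Schubert calculus for it, though; your own orbit remarks finish it.
\begin{itemize}
\item $G_2\times G_2$ has no fixed point in $OG(2,14)$. Indeed, $V_7$ and $V_7'$ are non-isomorphic irreducible modules, so there is no invariant $2$-plane.
\item By your observation, every orbit in a component $Y$ of $X$ then has dimension exactly $5$, hence is open in $Y$. So $Y$ is a single closed orbit $G_2/P_i$, and one factor, say $1\times G_2$, acts trivially on it.
\item Every plane of $Y$ is therefore $1\times G_2$-stable, hence contained in $V_7$.
\item So $Y\subset X\cap OG(2,V_7)\cong G_2/P_2$, and equality follows by dimension.
\end{itemize}

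The factorization $\delta=\chi\otimes\chi'$ thus yields both containment and exhaustiveness conceptually. Your degree check $c_{16}(\cS_+)\cdot\cL^5=36$, or the check $h^0(\cO_X)=2$, would also work, but only after a heavy computation you have not carried out.
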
\label{doubleG2P2}

\noindent {\it Remark.} This is the "doubled" version of Proposition \ref{G2P2}.

\proof According to \cite[Proposition 40]{sk}, the generic stabilizer of the action of $\CC^*\times Spin_{14}$ on the $64$-dimensional half-spin representation $\Delta_+$ is locally isomorphic to $G_2\times G_2$. Therefore, the Lie algebra $\fso_{14}$ contains two copies of $\fg_2$ and the adjoint variety $OG(2,14)$ contains two copies of $G_2/P_2$. If we can prove that they are contained in 
$(OG(2,14), \cS_+)$, we will be done. 

This can be checked with the help of the {\it multiplicative double point property} of \cite[Proposition 2.2.1]{am}: given a generic $\delta\in\Delta_+$, there is a unique decomposition $V_{14}\simeq V_7\oplus V'_7$ compatible with the action of its stabilizer, and yielding an identification of $\Delta_+$ with $\Delta_8\otimes \Delta'_8$ such that $\delta\simeq \chi\otimes\chi'$ for some generic $\chi\in\Delta_8$ and $\chi'\in \Delta'_8$. Here $\Delta_8$ denotes the spin representation of $\fso_7$. The isotropic lines in $\wedge^2V_7$ that kill $\chi$ are parametrized by a copy of $G_2/P_2$ (Proposition \ref{G2P2}), and since they also kill $\delta$, we are done.  \qed

\begin{prop}\label{3,14}
$(OG(3,14), \cS_+)$ is a Fano manifold of dimension $19$, index $6$ and Picard number one, with a quasi-homogeneous action of $G_2\times G_2$.
\end{prop}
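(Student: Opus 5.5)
The numerical invariants come first and are routine. On $OG(3,14)$ the spinor bundle $\cS_+$ has rank $2^{7-3-1}=8$ and $c_1(\cS_+)=4\cL$. Since $\dim OG(3,14)=3\cdot 14-\tfrac{3\cdot 4}{2}\cdot\tfrac{3}{2}\cdot 2$, more simply $\dim OG(k,m)=k(2m-3k-1)/2=3\cdot 19/2$, i.e. $\dim OG(3,14)=27$, the zero locus $X$ of a general section has dimension $27-8=19$. The index of $OG(3,14)$ is $m-k-1=10$, so adjunction gives $\iota_X=10-4=6$.

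Next, $\cS_+$ is globally generated and $8<27$, so $X$ is smooth of the expected dimension. For connectedness and Picard number one I would use the Koszul complex $\wedge^\bullet\cS_+^\vee$ together with Bott's theorem. It should show that $h^0(\cO_X)=1$ and that $H^1(\cO_X)=H^2(\cO_X)=0$. For $\rho_X=1$ I would use Lefschetz-type vanishing for the ample-ish bundle, or compute $h^{1,1}$ with the conormal-sequence method already described in the introduction; this is best done by the Macaulay2 Bott computation.

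For quasi-homogeneity I would follow the pattern of the $OG(3,12)$ and $OG(2,14)$ cases. Take the generic spinor $\delta\in\Delta_+$ from \cite[Proposition 40]{sk}, whose stabilizer has Lie algebra $\fg_2\times\fg_2$. Use the decomposition $V_{14}=V_7\oplus V_7'$ of the multiplicative double point property, under which $\delta=\chi\otimes\chi'$. The natural candidate point of $X$ is a $3$-plane $P$ built from both factors. One option is $P=L\oplus L'$ with $L\subset V_7$ an isotropic plane killing $\chi$ (a point of $G_2/P_2$) and $L'\subset V_7'$ an isotropic line. This choice is probably too special, so I would rather perturb it and take a non-split plane mixing the two factors, for instance spanned by $u_1+u_1'$, $u_2+u_2'$, $u_3+u_3'$ with $u_i\in V_7$, $u_i'\in V_7'$ and $\sum q(u_i)=-\sum q'(u_i')$ type conditions ensuring isotropy. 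Lemma \ref{kills} then reduces membership $[P]\in X$ to checking $\omega_P.\delta=0$. Writing $\omega_P$ through the tensor decomposition makes this check explicit.

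The main step is to compute the stabilizer $\gamma_P\subset\fg_2\times\fg_2$. It is linear algebra with the explicit matrices of \cite{sk}: the goal is $\dim\gamma_P=28-19=9$, so that the orbit of $[P]$ has dimension $19=\dim X$ and is therefore open. The main obstacle is finding a point that is actually generic: naive "split" planes typically have stabilizers that are too large. I would first try planes whose projections to $V_7$ and $V_7'$ are both $3$-dimensional, giving an isomorphism between a $3$-space in $V_7$ and one in $V_7'$ that is compatible with the $G_2$-structures only up to a sign of the quadratic form. Heuristically the stabilizer would then be a diagonal-type $9$-dimensional subalgebra, for example $\fsl_2\times\fsl_2\times\fsl_2$ as announced in the introduction for $(OG(3,12),\cS_+)$, or a solvable one. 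If the computed orbit falls short by one, I would adjust $P$ by adding a generic component before concluding.
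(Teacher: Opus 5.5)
\textbf{The gap: no generic point is exhibited.} Your computation of $\dim X=19$ and $\iota_X=6$ is fine, although the displayed formula is garbled: $2m-3k-1=18$, so $\dim OG(3,14)=3\cdot 18/2=27$. But the quasi-homogeneity, which is the real content of the statement, is not established. You never produce a point $[P]\in X$, never check $\omega_P.\delta=0$, and never compute $\gamma_P$. You only describe where you would search, and your fallback (``if the orbit falls short by one, adjust $P$ by a generic component'') has no reason to succeed.

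The inequality $\dim X\le \dim(G_2\times G_2)=28$ does not force an open orbit. It also holds for $(OG(4,14),\cS_+)$ and $(OG(5,14),\cS_+)$. There the same kind of search only yields cohomogeneity at most one, and random computations suggest there is no open orbit at all. So an explicit witness is indispensable, and supplying one is exactly what the paper's proof does. For $\delta=1+e_{1237}+e_{4567}+e_{123456}$ it takes
\[
P=\langle e_1+e_4+f_1-f_4,\ e_2+e_5+f_2-f_5,\ e_3+e_6+f_3-f_6\rangle ,
\]
checks that $\omega_P.\delta=0$, and solves the linear conditions for an element of the explicit $\fg_2\times\fg_2\subset\fso_{14}$ of \cite{sk} to preserve $P$. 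This gives a $9$-dimensional stabilizer, hence an orbit of dimension $28-9=19$.

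\textbf{What your ansatz is missing.} Your ``diagonal'' heuristic has the right shape but misses the condition that makes it work. The stabilizer found in the paper maps onto a copy of $\fso_3\subset\fgl(P)$ acting irreducibly on $P$. Its kernel is $\fs_1\times\fs_2$, with $\fs_i\simeq\fsl_2$ lying in the $i$-th copy of $\fg_2$.

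Irreducibility forces $P$ to meet $V_7$ and $V_7'$ trivially. So $P$ is the graph of an anti-isometry $\phi\colon A\to A'$ between $3$-spaces $A\subset V_7$ and $A'\subset V_7'$. Moreover, the pointwise stabilizers of $A$ and $A'$ in $\fg_2$ must be $3$-dimensional. For a nondegenerate $3$-space this happens only when it is an associative $3$-plane; for a generic $3$-space the pointwise stabilizer is trivial.

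So the ansatz to test is the graph of a suitably normalized anti-isometry between two associative $3$-planes. Such graphs form a family of dimension $8+8+3=19$, with stabilizers $\fsl_2\times\fsl_2\times\fso_3$ in $\fg_2\times\fg_2$, which is consistent with the paper's answer.

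\textbf{A minor point.} $\cS_+$ is not ample: it is globally generated with $c_1=\frac12\,\mathrm{rk}\,\cL$, so it has trivial summands on lines. Hence no Lefschetz-type theorem applies, and $\rho_X=1$ has to come from the Koszul/Bott computation of $b_2$. Purity of the cohomology follows from the torus-fixed-point argument.
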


\noindent{{\it Betti numbers:} $(1,1,2,3,6,7,10,12,15,15,15,15,12,10,7,6,3,2,1,1)$.}

\proof We use the explicit computations of the proof of \cite[Proposition 40]{sk}, where the stabilizer in $\CC^*\times Spin_{14}$ of the (generic) spinor
$$\delta=1+e_{1237}+e_{4567}+e_{123456}$$
is explicitly identified to $G_2\times G_2$, at least at the level of their Lie algebras.  We claim that the isotropic $3$-plane 
$$P=\langle e_1+e_4+f_1-f_4, e_2+e_5+f_2-f_5, e_3+e_6+f_3-f_6 \rangle$$
defines a point $X$ -- we let to the reader the boring task to check that $\omega_P$ kills $\delta_+$. 
Inside the Lie algebra stabilizer of $\delta_+$ described by \cite[(5.49)]{sk}, the notations of which we follow, we identify the stabilizer  $\fs$ of $P$ as the sub-Lie algebra given by the relations
$$x_{11}=x_{22}=x_{33}=y_{11}=y_{22}=y_{33}=0,$$
$$u+a=v+b=w+c=\lambda+d=\mu+e=\nu+f=0,$$
$$x_{12}+x_{21}=x_{13}+x_{31}=x_{23}+x_{32}=0,$$
$$y_{12}+y_{21}=y_{13}+y_{31}=y_{23}+y_{32}=0,$$
$$x_{12}-y_{12}=c-f, \quad x_{13}-y_{13}=e-b, \quad 
x_{23}-y_{23}=a-d.$$
In particular this stabilizer has dimension $9$, hence the orbit of $P$ has dimension $2\times 14-9=19$ and must be dense in $X$. 
 \qed 
 
\medskip \noindent {\it Remark}. With some extra work we can identify the Lie algebra stabilizer $\fs$ of $P$ as follows. From the relations above we can and will express all the indeterminates in terms of $a,b,c,d,e,f,x_{12},x_{13},x_{23}$. We first observe that $\fs$ acts on $P$ by the following matrix, in the defining basis:
 $$\begin{pmatrix}
 0&x_{12}-c& x_{13}+b \\ c-x_{12}& 0 & x_{23}-a\\
 -b-x_{13} & a-x_{23} & 0 
 \end{pmatrix}.$$
This generic skew-symmetric matrix generates a copy of $\fso_3\simeq\fsl_2$. Then we describe the subalgebra $\fs_0$ of $\fs$ that acts trivially on $P$, which means that it is subject to the extra conditions $x_{12}=c$, $x_{13}=-b$, $x_{23}=a$. We claim that inside $\fg_2\times \fg_2$, $\fs_0$ embeds as a product $\fs_1\times \fs_2$ of two subalgebras embedded inside the two copies of $\fg_2$, respectively. To see this, we use the identification of the stabilizer of $\delta_+$ with $\fg_2\times\fg_2$ provided by the matrices denoted $A_4, A_5$ in \cite[(5.49)]{sk}. For a matrix in $\fs_0$ defined by $(a,b,c,d,e,f)$, we get for example
$$A_4=\begin{pmatrix}
0&-2a&-2b&-2c&2a&2b&2c \\
a& 0 & c & -b & 0 & -c& b \\
b & -c &0 & a & c &0 & -a \\
c& b&-a& 0 & -b & a& 0\\ 
-a& 0 & -c & b & 0 & c& -b \\
-b & c &0 & -a & -c &0 & a \\
-c& -b&a& 0 & b & -a& 0 
\end{pmatrix}.$$
This matrix does not depend on $d,e,f$. Moreover, it we call $g_0,\ldots , g_6$ the basis vectors, it kills 
$g_1+g_4, g_2+g_5, g_3+g_6$, has its image contained in the span of $(2g_0, g_1-g_4, g_2-g_5, g_3-g_6)$, and acts on the space having these four vectors as basis by the matrix
$$2\begin{pmatrix}
0 & -a & -b& -c \\ a & 0 & c&-b\\
b&-c&0 &a\\ c& b&-a&0
\end{pmatrix}.$$
This is easily seen to give a copy of $\fsl_2$, acting on the sum of two copies of its natural representation. In particular $\fs_1\simeq\fsl_2$ and symmetrically $\fs_2\simeq\fsl_2$. We finally conclude that the stabilizer of $P$ in $G_2\times G_2$ is locally isomorphic to $SL_2\times SL_2\times SL_2$.

\medskip\noindent {\it Remark.} As we recalled in the proof of  Proposition \ref{doubleG2P2}, the action of $G_2\times G_2$ can be traced back from an orthogonal decomposition $V_{14}\simeq V_7\oplus V'_7$ into two copies of the vector representation of $G_2\subset Spin_7$. A maximal torus of $G_2$ acts on $V_7$ with weights $0, \pm\epsilon_1, \pm\epsilon_2, \pm\epsilon_3$, where the weight space of weight zero is not isotropic. We deduce that a maximal torus of $G_2\times G_2$ acts on $V_{14}$ with weights 
$\pm\epsilon_i, \pm\epsilon'_i$, $1\le i\le 3$, and $0$ with multiplicity $2$. The induced action on $G(3,14)$ therefore fixes finitely many points, plus a collection of projective lines parametrizing $3$-planes generated by two weight spaces of nonzero weight, and some weight zero vector. Since the zero weight space is not isotropic, only finitely many of those fixed points parametrize isotropic $3$-planes.  We conclude  that a maximal torus of $G_2\times G_2$ has finitely many fixed points in $OG(3,14)$, hence also, a fortiori, in $(OG(3,14),\cS_+)$. This implies, by the Bialynicki-Birula decomposition, that this variety has pure cohomology, and is a compactification of the affine space. This simplifies a lot the computation of the Betti numbers, which can also be deduced from the torus action on the tangent spaces at the fixed points.

\begin{prop}
$(OG(4,14), \cS_+)$ is a Fano manifold of dimension $26$, index $7$ and Picard number one, with an action of $G_2\times G_2$ of cohomogeneity at most one.
\end{prop}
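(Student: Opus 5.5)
The numerical invariants come first. $OG(4,14)$ has dimension $4(14-4)-\binom{5}{2}=40-10=30$. The spinor bundle $\cS_+$ has rank $2^{7-4-1}=4$, and by the formula recalled in the introduction $c_1(\cS_+)=2\cL$. The canonical bundle of $OG(k,2n)$ is $\cO(-(2n-k-1))$, which gives $\cO(-9)$ here. By adjunction $X=(OG(4,14),\cS_+)$ therefore has dimension $30-4=26$ and index $9-2=7$. For the Picard number, I would twist the Koszul complex of $s$ by $\cO$ and check with Bott's theorem that the terms $\wedge^j\cS_+^\vee$ are acyclic for $j\geq 1$; this gives $h^0(\cO_X)=1$ and $h^1(\cO_X)=0$, so $X$ is connected. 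Since $\cS_+$ is globally generated of rank $4<\frac{1}{2}\dim OG$, a Lefschetz-type theorem for zero loci of ample-enough bundles would give $\rho_X=1$. Alternatively, I would compute $h^{1,1}(X)=1$ directly from the conormal sequence and Bott's theorem, as the paper does for its Betti numbers.

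For the action, I follow the same strategy as in Proposition \ref{3,14}. I take the generic spinor $\delta=1+e_{1237}+e_{4567}+e_{123456}$ of \cite[Proposition 40]{sk}, with stabilizer $\fg_2\times\fg_2$ given by \cite[(5.49)]{sk}. The first task is to find an isotropic $4$-plane $Q$ with $\omega_Q.\delta=0$, so that $[Q]\in X$ by Lemma \ref{kills}. The natural attempt is to enlarge the $3$-plane $P$ from the proof of Proposition \ref{3,14} by a fourth isotropic vector in $P^\perp$. Candidates are vectors built from the weight-zero directions $e_7,f_7$ together with combinations such as $e_4+e_5+e_6-f_1-f_2-f_3$ (by analogy with the $4$-plane used for $(OG(4,12),\cS_+)$). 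I would check $\omega_Q.\delta=0$ with a short computer-assisted expansion.

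Then I compute the stabilizer of $Q$ inside the $28$-dimensional algebra $\fg_2\times\fg_2$. This stabilizer is the subalgebra of the $9$-dimensional stabilizer $\fs$ of $P$ that also preserves $Q$, when $P\subset Q$ is canonical; in general it is cut out by linear equations in the parameters of \cite[(5.49)]{sk}. Cohomogeneity at most one means the orbit of $[Q]$ has dimension at least $25$, so I need the stabilizer to have dimension at most $3$. An orbit of dimension $26$ (stabilizer of dimension $2$) would give quasi-homogeneity, but the statement only claims the weaker bound. I expect the stabilizer to come out as something like a $3$-dimensional subalgebra, for instance a diagonal $\fsl_2$ inside $\fs$ acting on $Q/P$.

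The main obstacle is choosing $Q$ generic enough. A badly chosen $Q$ can lie in a small orbit and give a large stabilizer, which says nothing about the generic orbit. Because the claimed bound is only cohomogeneity at most one, it suffices to exhibit a single $Q\in X$ whose stabilizer has dimension at most $3$: the general orbit has dimension at least that of any orbit, by semicontinuity of stabilizer dimension. If the first guess fails, I would parametrize the $4$-planes containing $P$ that are contained in $X$. This family is the zero locus of the induced section in the fiber $OG(1,P^\perp/P)$, a quadric, cut by a rank-$4$ condition. I would pick a random point in it numerically, with Macaulay2, and compute the rank of the infinitesimal action map $\fg_2\times\fg_2\to T_{[Q]}X$.
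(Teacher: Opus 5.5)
Your plan is essentially the paper's proof. The paper takes $Q=P+\langle e_7\rangle$ (one of your candidates), finds that its stabilizer in $\fg_2\times\fg_2$ is a copy of $\fsl_2$, and concludes by semicontinuity as you do.

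Two minor corrections. First, $\cS_+$ is not ample: it is globally generated of rank $4$ and degree $2$ on lines, so it has trivial summands there. For $\rho_X=1$, rely on your $h^{1,1}$ computation rather than an ample-bundle Lefschetz theorem. Second, your fallback imposes no extra condition, since $\omega_Q.\delta=\pm v.(\omega_P.\delta)=0$ for every isotropic $Q=P\oplus\langle v\rangle$; so all such $Q$ lie in $X$, but they sweep out an invariant subvariety of dimension at most $19+6=25$. That is enough for cohomogeneity at most one, but it can never exhibit an open orbit.
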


\noindent{\it Betti numbers:} $(1,1,2,4,6,8,12,16,20,-,\ldots,-,20,16,12,8,6,4,2,1,1)$.

\proof Consider the isotropic $4$-space 
$$Q=\langle e_1+e_4+f_1-f_4, e_2+e_5+f_2-f_5, e_3+e_6+f_3-f_6, e_7 \rangle .$$
Using the previous computations, we check that the Lie algebra stabilizer of $Q$ in $\fg_2\times\fg_2$ is a copy of $\fsl_2$, so the orbit of $Q$ has codimension one. This implies that the generic orbit has codimension at most one, as claimed. \qed 

\medskip\noindent {\it Remark.} We have not been able to establish which of the two following possibilities does hold. Either there is an open orbit $(G_2\times G_2)/K$, with $K$ two-dimensional, and this open orbit must be  affine since it is the complement of an ample hypersurface (because the Picard number is one); moreover, by Richardson's theorem applied to the cone over $X$, the connected component of $K$ would be conjugate to a subgroup of $SL_2$, and the only possibility in dimension two would be a Borel subgroup. Or the generic orbit has codimension one in $X$ (and the generic stabilizer has Lie algebra $\fsl_2$). Extensive computations of stabilizers at  "random"  points, for which we always found three-dimensional stabilizers,  make the second hypothesis more likely than the first one.

\begin{prop}
$(OG(5,14), \cS_+)$ is a Fano manifold of dimension $28$, index $7$ and Picard number one, with an action of $G_2\times G_2$ of cohomogeneity at most one.
\end{prop}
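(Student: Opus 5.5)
Following the proof for $(OG(4,14),\cS_+)$, I will work with the explicit generic spinor $\delta=1+e_{1237}+e_{4567}+e_{123456}$ and the explicit Lie algebra $\fg_2\times\fg_2$ of its stabilizer given in \cite[(5.49)]{sk}.

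The numerical invariants come first. Since $OG(5,14)$ has dimension $5\cdot 14/2 - 15 = 55$ wait, more carefully $\dim OG(k,m)=k(2m-3k-1)/2$, so $\dim OG(5,14)=5\cdot 12/2=30$. The spinor bundle $\cS_+$ has rank $2^{7-5-1}=2$ and determinant $\cO(1)$. Hence $X$ has dimension $28$. The canonical class of $OG(5,14)$ is $\cO(-(14-5-1))=\cO(-8)$, so adjunction gives index $8-1=7$. Picard number one follows from Lefschetz-type arguments via the Koszul complex $0\to\cO(-1)\to\cS_+^\vee\to\cO\to\cO_X\to 0$ and Bott vanishing, as in the previous cases.

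For the cohomogeneity, I extend the $4$-plane $Q$ from the previous proposition by a fifth isotropic vector, orthogonal to $Q$, chosen so that the resulting $5$-plane $R$ satisfies $\omega_R.\delta=0$ (Lemma \ref{kills}). A natural first attempt is
$$R=\langle e_1+e_4+f_1-f_4,\ e_2+e_5+f_2-f_5,\ e_3+e_6+f_3-f_6,\ e_7,\ v\rangle ,$$
with $v$ a combination such as $e_1-e_4-f_1-f_4$ or a variant adjusted by signs. The variant must keep $R$ isotropic and make the wedge with $\delta$ vanish. Since the stabilizer of $Q$ is already only $\fsl_2$, the stabilizer of $R$ will be contained in it (up to the extra freedom of choosing $v$). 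Failing that, I pick $R\supset Q'$ for some other point $Q'$ of $(OG(3,14),\cS_+)$ with stabilizer $\fsl_2^3$ from the Remark after Proposition \ref{3,14}, and choose the two extra vectors to cut the stabilizer down.

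The target is a Lie algebra stabilizer $\fs_R\subset\fg_2\times\fg_2$ of dimension at most $1$. This makes the orbit of $[R]$ have dimension at least $28-1=27$, which is codimension at most one in $X$. Lower semicontinuity of orbit dimensions then gives cohomogeneity at most one for the generic point. If $\fs_R$ turned out to be $0$, the action would even be quasi-homogeneous.

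The main obstacle is finding $R$. Isotropy, the killing condition $\omega_R.\delta=0$, and genericity of the stabilizer all have to hold at once. The killing condition is a wedge–contraction computation on $\wedge^\bullet E$ with four monomials, which is tedious but mechanical. The stabilizer computation is then linear algebra: I impose that each matrix in (5.49) sends the basis of $R$ into $R$, which gives linear equations in the $28$ parameters.
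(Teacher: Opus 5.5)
\textbf{There is a genuine gap.} Your numerical part is fine: $\dim OG(5,14)=30$, and $\cS_+$ has rank $2$ and determinant $\cO(1)$, so $\dim X=28$ and $\iota_X=8-1=7$. Your Picard-rank argument is only sketched, but it is the same Koszul/Bott computation the paper uses for the Betti numbers.

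The gap is in the only substantive claim, the cohomogeneity bound. That claim amounts to exhibiting one point $R\in X$ whose stabilizer in $\fg_2\times\fg_2$ has dimension at most one. Your proposal never produces such a point: it lists candidates (``a natural first attempt'', ``a variant adjusted by signs'', ``failing that\dots'') without fixing one or computing anything. The paper's proof is exactly this missing step. It takes $R=P\oplus\langle g_1,g_2\rangle$, with $P$ the $3$-plane from the proof of Proposition \ref{3,14}, $g_1=\sqrt{2}e_7+e_1-f_1+e_5+f_5$ and $g_2=\sqrt{2}f_7-e_3+f_3-e_5-f_5$. So $R$ extends $P$ by two vectors and does not contain $e_7$. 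A computer calculation then shows that its stabilizer is one-dimensional.

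The heuristics you offer in place of this computation do not work.

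First, $\mathrm{Stab}(R)$ need not lie in $\mathrm{Stab}(Q)$ when $Q\subset R$, because an element preserving $R$ can move $Q$ inside $R$. In fact any $R\in X$ containing $Q$ contains at least a surface of $4$-planes $U$ with $\omega_U.\delta=0$. Indeed, given $\delta\in R.\Delta_-$, the condition $\delta\in U.\Delta_-$ is the vanishing of a section of a rank two bundle on $\PP(R^\vee)\simeq\PP^4$, and this zero locus contains $[Q]$. Even granting the inclusion, you would only get $\dim\fs_R\le 3$, and a computation would still be needed to reach $1$.

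Second, the condition $\omega_R.\delta=0$ that you call the main obstacle is automatic. The relation $\omega_P.\delta=0$ means $\delta\in P.\Delta_-$, hence $\delta\in R.\Delta_-$ for every isotropic $R\supset P$ (Lemma \ref{kills}). The only real requirements are isotropy and genericity, and genericity is exactly what the stabilizer computation must certify.

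Your concrete candidate $v=e_1-e_4-f_1-f_4$ gives
$R=\langle e_1-f_4,\ e_4+f_1,\ e_2+e_5+f_2-f_5,\ e_3+e_6+f_3-f_6,\ e_7\rangle$.
This plane is adapted to the coordinate pairs, and nothing in your argument bounds its stabilizer.

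To close the gap, commit to an explicit isotropic $R$ and actually solve the linear system in the $28$ parameters of \cite[(5.49)]{sk}, obtaining a solution space of dimension at most one. You could take $Q$ plus a sufficiently general isotropic vector of $Q^\perp$; extending $Q$ instead of $P$ is a legitimate alternative, since the paper's bound $\dim\fs_Q=3$ makes the orbit of $e_7$ modulo $P$ open. Or you could use the paper's $R$.
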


\medskip\noindent {\it Betti numbers:} $(1,1,3,4,7,10,14,18,23,29,-,\ldots,-,29,23,18,14,10,7,4,3,1,1)$. 

\proof Consider the three-plane $P$ we found in the proof of Proposition \ref{3,14}. Let $R$ be the isotropic space spanned by $P$ and the vectors
$$g_1=\sqrt{2}e_7+e_1-f_1+e_5+f_5, \qquad g_2=\sqrt{2}f_7-e_3+f_3-e_5-f_5. $$
A computer calculation shows that the stabilizer of $R$ in $\fg_2\times \fg_2$ is one-dimensional. So the generic orbit has codimension at most one, as claimed.  \qed

\medskip\noindent {\it Remark.} As for the previous case we have not been able to establish whether the generic orbit has codimension $0$ or $1$. And again, extensive "random" computations suggest that there is no open orbit.  

\medskip
We conclude this section with one last identification, which can be seen as a variant of 
\cite[Theorem 3.1]{kuznetsov}.

\begin{prop}
$(OG(n,2n+1), \wedge^2\cU^\vee)\simeq (\PP^1)^n$. 
\end{prop}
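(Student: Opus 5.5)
We need to understand the statement. $OG(n,2n+1)$ has dimension $n(n+1)/2$, and $\wedge^2\cU^\vee$ has rank $n(n-1)/2$, so the zero locus has dimension $n$. Global sections of $\wedge^2\cU^\vee$ are $\wedge^2V^\vee$, i.e. skew forms on $V=\CC^{2n+1}$. A section $\omega$ vanishes at $[U]$ exactly when $U$ is isotropic for $\omega$. So $X$ parametrizes $n$-dimensional subspaces of $V$ that are isotropic both for the quadratic form $q$ and for a general skew form $\omega$.

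The plan is to put the pair $(q,\omega)$ in normal form. Equivalently, consider the element $A=q^{-1}\omega\in\fso(V)$; a general $\omega$ corresponds to a regular semisimple element of $\fso_{2n+1}$. Its eigenvalues are $0,\pm\lambda_1,\dots,\pm\lambda_n$, all distinct, and the eigenlines give an orthogonal decomposition
$$V=L_0\oplus \bigoplus_{i=1}^n (L_i^+\oplus L_i^-),$$
where $L_0$ is non-isotropic, each plane $W_i=L_i^+\oplus L_i^-$ is nondegenerate for $q$ with isotropic lines $L_i^\pm$, and the $W_i$ and $L_0$ are mutually orthogonal for both $q$ and $\omega$. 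Moreover $\omega$ restricted to $W_i$ is nondegenerate, and $L_0$ is the kernel of $\omega$. This is the orthogonal analogue of simultaneously diagonalizing two symplectic forms by planes, as in the Lagrangian case quoted from \cite{kuznetsov}.

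Next we define the map $(\PP^1)^n\to X$. A point $(\ell_1,\dots,\ell_n)$ with $\ell_i\in\PP(W_i)\simeq\PP^1$ is sent to $U=\ell_1\oplus\cdots\oplus\ell_n$. This subspace is $\omega$-isotropic automatically, since $\omega$ is skew and the $W_i$ are $\omega$-orthogonal. However it is $q$-isotropic only if each $\ell_i$ is one of the two isotropic lines $L_i^\pm$. So this naive map is wrong, and we must use $L_0$. Write $q|_{W_i}$ in the coordinates $L_i^\pm$ as $xy$, and choose a unit vector $z$ spanning $L_0$. For a general vector $v_i=(x_i,y_i)\in W_i$, the vector $v_i+c_i z$ with $c_i^2=-x_iy_i$ is isotropic, but the choice of sign is a problem. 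To fix this, let $\omega$ pair $L_i^+$ and $L_i^-$, and consider the subspace $U=\langle v_i+c_iz\rangle$. Since $\omega(z,\cdot)=0$ and the $W_i$ are $\omega$-orthogonal, $U$ is $\omega$-isotropic. For $q$-isotropy we need $q(v_i+c_iz,v_j+c_jz)=c_ic_j=0$ for $i\neq j$, so at most one $c_i$ is nonzero. This gives only a degenerate picture, which signals that the correct dictionary is different.

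The better route is through $OG(n,2n+1)\simeq OG(n+1,2n+2)_+$: skew forms on $V$ extend, and the problem becomes counting $(n+1)$-spaces in $\CC^{2n+2}$ isotropic for $q$ and for $\omega$ extended with $\CC\cdot e$, where $V=e^\perp$. With normal forms, $\CC^{2n+2}=\bigoplus_{i=0}^n W_i$ with $W_0=L_0\oplus\CC e$. A maximal isotropic space isotropic for a generic pair decomposes compatibly as $\bigoplus U\cap W_i$, by eigenspace invariance. Each $U\cap W_i$ is then an isotropic line in $W_i$, giving $2^{n+1}$ choices, which is finite and again not $(\PP^1)^n$.

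So the genuine key step, and the main obstacle, is finding the correct normal form. I would first check $n=1$: here $OG(1,3)$ is a conic $\simeq\PP^1$, $\wedge^2\cU^\vee$ has rank $0$, and $X=\PP^1$, which is consistent. For $n=2$, $X$ is the zero set of a section of a line bundle on $OG(2,5)=\PP^3$, namely $\wedge^2\cU^\vee=\cO(2)$, a quadric surface $\simeq\PP^1\times\PP^1$. This suggests the general argument: identify $OG(n,2n+1)$ with the spinor variety in $\PP(\Delta)$, and interpret $\omega\in\wedge^2V\subset\mathrm{End}(\Delta)$ as a generic element of a Cartan subalgebra. It acts on $\Delta=\bigotimes_i\Delta(W_i\oplus\dots)$, which is $(\CC^2)^{\otimes n}$, and the zero locus is then the Segre-type locus of decomposable pure spinors $\otimes_i\ell_i$ with $\ell_i\in\PP^1$. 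Finally, the dimension count $n$, smoothness from generality, and connectedness via the Koszul complex give $X\simeq(\PP^1)^n$.
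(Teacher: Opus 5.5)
Your normal form is correct, and it is exactly the one the paper uses: $A=q^{-1}\omega$ is regular semisimple, and $V=L_0\oplus\bigoplus_i W_i$ is orthogonal for both forms, with $L_0=\ker\omega$ non-isotropic. So the normal form is not the obstacle. The gap is that you never prove $X\simeq(\PP^1)^n$. Your last paragraph only asserts that, viewing $\omega$ as a Cartan element and $\Delta$ as $(\CC^2)^{\otimes n}$, the zero locus ``is'' the Segre locus of decomposable pure spinors. Nothing in your proposal relates the condition $\omega|_U=0$ to decomposability.

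The assertion also cannot be right as stated. The locus you describe depends only on the torus $T\simeq(\CC^*)^n$ centralizing $A$. But $X$ also depends on the eigenvalues $\pm\theta_i$ of $A$ on $L_i^\pm$. Already for $n=2$, $X_\omega\subset OG(2,5)\simeq\PP^3$ is a quadric with $T$-invariant equation. It therefore lies in the pencil spanned by the two weight-zero quadratic monomials in the weight coordinates of $\Delta$. Since $\wedge^2V^\vee\to H^0(\PP^3,\cO(2))$ is injective, it moves in this pencil as $\theta_1/\theta_2$ varies. So the torus only gives a weight basis of $\Delta$, and \emph{which} rescaled Segre variety in that basis equals $X$ has to be computed. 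Your closing appeal to dimension, smoothness and connectedness finishes the proof only once such an inclusion is established.

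The detour through $OG(n+1,2n+2)_+$ also failed for a different reason than you thought: you changed the condition. One needs $\omega$ to vanish on $R\cap\CC^{2n+1}$, not an extension of $\omega$ to vanish on $R$. For a nondegenerate pair, every maximal $R$ isotropic for both forms is $A$-stable, which is why you found finitely many.

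The paper supplies the missing computation. It works in the affine chart of $OG(n+1,2n+2)_+$ around $\langle e_1,\dots,e_{n+1}\rangle$, whose points are skew matrices $\pi$. There, $\omega$-isotropy of $R\cap\CC^{2n+1}$ forces $\pi_{ij}=\mu_{ij}\gamma_i\gamma_j$, with $\mu_{ij}=(\theta_i-\theta_j)/(\theta_i+\theta_j)$ and $\gamma_{n+1}=1$. The spinor coordinates $Pf_I(\pi)=Pf_I(\mu)\prod_{i\in I}\gamma_i$ are then a $\theta$-dependent rescaling of the Segre parametrization. A Schubert calculus computation giving $\deg X=n!$ then excludes further components.

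Alternatively, your torus idea can be made into a complete proof:
\begin{enumerate}
\item $T$ preserves $q$ and $\omega$, hence $X$. Its fixed points on $OG(n,2n+1)$ are the $2^n$ subspaces $\bigoplus_i L_i^{\pm}$, and all of them are $\omega$-isotropic.
\item At $U=\bigoplus_i L_i^+$, the differential of the section maps the weight $-\epsilon_i-\epsilon_j$ line of $T_UOG$ to that of $\wedge^2U^\vee$ by multiplication by $\theta_i+\theta_j$, up to sign. This is nonzero by regularity of $A$. Hence $T_UX=Hom(U,L_0)$ has weights $-\epsilon_1,\dots,-\epsilon_n$. The same computation shows the weights at $\bigoplus_i L_i^{s_i}$ are $-s_1\epsilon_1,\dots,-s_n\epsilon_n$.
\item These weights form a basis, so $T$ acts on the connected manifold $X$ with an open orbit. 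Thus $X$ is a smooth complete toric variety. Its fan, read off from the tangent weights at its $2^n$ fixed points, is the fan of $(\PP^1)^n$.
\end{enumerate}
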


\proof This is the variety of $n$-planes that are isotropic with respect to both a non-degenerate quadratic form $Q$ and a general skew-symmetric form $\omega$; we denote this variety by $X$. Since the dimension is odd, $\omega$ has a one-dimensional kernel $\langle e_0\rangle$, which is not isotropic with respect to $Q$. On its orthogonal $V:=e_0^\perp$, the restrictions of both $Q$ and $\omega$ are non-degenerate, and define isomorphisms $\theta_Q,  : V\lra V^\vee$ and $\theta_\omega,  : V^\vee\lra V$. For $\omega$ general, 
$\theta_\omega\circ \theta_Q$ will be regular semisimple. In particular, it stabilizes some plane $P_1\subset V$, which means that the orthogonal to $P_1$ in $V$ is the same with respect to either $Q$ or $\omega$. By induction, we can find a decomposition $V=P_1\oplus\cdots\oplus P_n$ which is orthogonal 
with respect to both $Q$ and $\omega$. Choosing a suitable basis $(e_i,f_i)$ for each plane $P_i$, we put the matrices of $Q$ and $\omega$ in the following simultaneous normal forms:
$$Q=\begin{pmatrix}
J & 0  & \cdots &0&0\\
0& J& \cdots  & 0&0\\
\cdots &&&&\cdots \\
0 & 0  & \cdots &J &0\\
0 & 0  & \cdots& 0 &1
\end{pmatrix}, \qquad 
\omega =\begin{pmatrix}
\theta_1K & 0  & \cdots &0&0\\
0& \theta_2K& \cdots & 0&0\\
\cdots &&&&\cdots \\
0 & 0 &\cdots &\theta_n K &0\\
0 & 0  & \cdots& 0 &0
\end{pmatrix},$$
for some nonzero scalars $\theta_1, \cdots ,\theta_n$, where 
$$J=\begin{pmatrix} 0&1\\ 1&0\end{pmatrix}, \qquad 
K=\begin{pmatrix} 0&1\\ -1&0\end{pmatrix}.$$
It is then convenient to embed $\CC^{2n+1}$ as a hyperplane in $V\simeq \CC^{2n+2}$ by adding a vector $f_0$ such that $Q(e_0,f_0)=0$ and $\omega(e_0,f_0)=-1$. We get a basis of isotropic vectors in $\langle e_0, f_0\rangle$ by letting $e_{n+1}=\frac{1}{\sqrt{2}}(e_0+f_0)$ and $f_{n+1}=\frac{1}{\sqrt{2}}(e_0-f_0)$. We can then identify $OG(n,2n+1)$ with one component of $OG(n+1,2n+2)$, by sending  
 $P_0=\langle e_1,\ldots ,e_n\rangle$ to $Q_0=\langle e_1,\ldots ,e_n,e_{n+1}\rangle$. Note that $P_0$ belongs to our locus $X$, that we want to describe by focusing first to a neighborhood of $Q_0$ in  $OG(n+1,2n+2)$; indeed, any maximal isotropic subspace $R \subseteq V$ transverse to  $Q_1=\langle f_1,\ldots ,f_n,f_{n+1}\rangle$ has a unique basis of the form 
$$q_i=e_i+\sum_{j=1}^n \alpha_{ij} f_j+\beta_i f_{n+1}, \quad 1\le i\le n,$$
$$q_{n+1}=e_{n+1}-\sum_{i=1}^n e_i,$$
with $\alpha_{ij}=-\alpha_{ji}$. We need to impose that $P=R\cap\CC^{2n+1}$ is also isotropic with respect to $\omega$.
A basis of $P$ is given by the vectors $p_i=q_i+\beta_i f_{n+1}$, 
$1\le i\le n$. We have 
$$\omega(p_i,p_j)=\theta_i(\alpha_{ij}+\gamma_i\gamma_j)-\theta_j(\alpha_{ji}+\gamma_j\gamma_i),$$
so that the isotropy condition completely determines $\alpha$:
$$\alpha_{ij}=\frac{\theta_i-\theta_j}{\theta_i+\theta_j}\gamma_i\gamma_j\qquad \forall i,j.$$
If we let $\gamma_{n+1}=1$, we can conclude that the skew-symmetric matrix $\pi$ of size $n+1$ that defines $R$ is of the form $\mu_{ij}\gamma_i\gamma_j$ for a fixed matrix $\mu$, depending only on $\theta$. 

Now, we need to remember (see \cite[section 2.3]{spinor-secant}) that the spin embedding of $OG(n,2n+1)= OG(n+1,2n+2)_+$ inside $\PP \Delta\simeq \PP \Delta_+$ can be described by sending $P$ to the point in projective space whose homogeneous coordinates are given by the Pfaffians $Pf_I(\pi)$ of all the skew-symmetric submatrices of $\pi$, defined by the indices $I=(i_1<\cdots <i_{2k})$ of their lines and columns. But because of the form of the matrix $\pi$, we clearly have
$$Pf_I(\pi)=Pf_I(\mu)\prod_{i\in I}\gamma_I.$$
The index set $I$ must be of even size, but since $\gamma_{n+1}=1$, we get in fact all the products of distinct entries of $\gamma$. 
So we exactly recover the classical parametrization of the image of the Segre embedding of  $(\PP^1)^n$ inside $\PP^{2^n-1}$.

\medskip
There remains to check that this is the full locus we are looking for. It is enough for this to check that the degree of $(\PP^1)^n$, that is $n!$, is equal to the degree of our locus. The latter can be computed as 
$$
d=\int_{OG(n,2n+1)}c_{top}(\wedge^2U^\vee)h^n=\frac{1}{2^n}
\int_{G(n,2n+1}c_{top}(\wedge^2U^\vee)c_{top}(S^2U^\vee)\sigma_1^n,$$
since the Pl\"ucker class restricted to $OG(n,2n+1)$ is twice the class that defines the spinor embedding. The top Chern classes are known to be 
$$c_{top}(\wedge^2U^\vee)=\sigma_{\delta(n-1)}, \qquad c_{top}(S^2U^\vee)=2^n\sigma_{\delta(n)},$$
where $\delta(n)=(n,n-1,\ldots ,2,1)$. Recall that the Schubert classes $\sigma_\lambda$, for $\lambda$ a partition
inscribed into the rectangle of size $n\times (n+1)$, provide a basis of the Chow ring of $G(n,2n+1)$. Moreover, this basis is Poincar\'e self-dual, up to the permutation sending $\lambda$ to the partition which is its complement in the rectangle. In particular, the class 
$\sigma_{\delta(n)}$ is its own Poincar\'e dual. We conclude that the degree $d$ is equal to the coefficient of $\sigma_{\delta(n)}$ inside the product of $\sigma_{\delta(n-1)}$ by $\sigma_1^n$. By the Pieri rule, the coefficient of $\sigma_\mu$ inside the product of $\sigma_{\lambda}$ by $\sigma_1^k$ is, in general, the number of ways one can number by $1,\ldots , k$ the complement of the partition $\lambda$ into the partition $\mu$, in such a way that the numbers increase on each line and each column. But the latter condition is clearly void for $\lambda=\delta(n-1)$ and $\mu=\delta(n)$, so $d=n!$ and the proof is complete. \qed 

\section{A non-trivial family}

According to \cite[Proposition 41]{sk}, the action of $Spin_{13}$ on the projectivized spin representation $\Delta$ is \emph{not} prehomogeneous: the generic stabilizer is $SL_3\times SL_3$ and the generic orbit has codimension one. 

Nevertheless, we observe two different interesting phenomena: in the next Proposition, we get a rigidity 
result in the adjoint variety of $Spin_{13}$; then we obtain a family of prime Fano manifolds of dimension at least one that compactify a finite quotient of $SL_3\times SL_3$
(probably $SL_3\times SL_3$ itself, but we have not checked this). That such a family of prime compactifications can exist looks quite remarkable.

\begin{prop}
$(OG(2,13), \cS)=Fl_3\cup Fl_3$
\end{prop}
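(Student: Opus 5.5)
The plan follows the pattern of Propositions \ref{fl134} and $(OG(2,12),\cS_+)=Fl_6$ and its "doubled" analogue $(OG(2,14),\cS_+)=G_2/P_2\cup G_2/P_2$. First I would record the numerics. $OG(2,13)$ has dimension $2\cdot 13-7=19$, and $\cS$ has rank $2^{6-2-1}=8$. So $X=(OG(2,13),\cS)$ has dimension $11$, and by adjunction $K_X=\cO(-10+4)=\cO(-6)$. This matches $Fl_3$, which has dimension $3$, not $11$. So the statement must be read with care. Since $\dim Fl_3=3$, the claimed decomposition into two copies of $Fl_3$ cannot be a full-dimensional description. I would therefore recheck the rank, looking instead for the right bundle or reading $\cS$ appropriately so that the zero locus has dimension $3$ and is a disjoint union of two components. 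Concretely, I would expect $X$ to be the fixed locus computed via the generic stabilizer and to verify the dimension by a Chern class computation. If the dimension count is off, the zero locus is not of the expected dimension, and one must argue set-theoretically.

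The core construction runs as follows. By \cite[Proposition 41]{sk}, a generic $\delta\in\Delta$ has stabilizer locally $SL_3\times SL_3$. As in the proof of $(OG(2,14),\cS_+)$, I would use a decomposition of the form $V_{13}=V_6\oplus V_6'\oplus L$, with $V_6=E_3\oplus E_3^\vee$ and similarly for $V_6'$, so that each $SL_3$ factor acts on one $V_6$ in the standard way. Correspondingly $\Delta$ restricts as $\Delta_6\otimes\Delta_6'\otimes(\cdot)$, and $\delta$ becomes a product of generic spinors $(1+e_{123})$-type in each factor. Inside $\fso(V_6)\supset\fgl_3$, the adjoint variety $Fl_3$ of $\fsl_3$ embeds in $OG(2,V_6)$ by sending a flag $(\ell\subset\phi^\perp)$ to the plane $\langle\ell,\phi\rangle$, exactly as in the $Fl_6$ case. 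I would then check via Lemma \ref{kills} that $\ell\wedge\phi$ kills $\delta$. Here $\phi(\ell)=0$ ensures that the contraction of $e_\ell\wedge\delta$ against $\phi$ vanishes, just as in the $SL_6$ computation. Doing this in both factors gives two disjoint copies of $Fl_3$ inside $X$.

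The final step, which I expect to be the main obstacle, is to show there is nothing else. I would argue that $X$ is stable under $SL_3\times SL_3$ and that each component through the two $Fl_3$'s is closed. I would then use a torus fixed-point count: the maximal torus of $SL_3\times SL_3$ has finitely many fixed points on $OG(2,13)$, and one checks that those lying in $X$ are exactly the $2\times 6$ fixed points of the two flag varieties. Every component of $X$ contains a torus fixed point, so $X$ is the union of the two $Fl_3$'s. The delicate part is computing the torus weights on $V_{13}$ (namely $\pm\epsilon_i,\pm\epsilon'_i,0$) and checking which weight-space planes satisfy the vanishing condition of Lemma \ref{kills}. To guarantee reducedness, I would also compute the tangent space of $X$ at one fixed point via the matrix map $T_{[P]}OG\to\cS_{[P]}$ used in the proof of Proposition \ref{horo}, and confirm that it has dimension $3$.
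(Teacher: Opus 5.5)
\textbf{The opening numerics.} Your rank computation is wrong, and the doubt it casts on the statement is unfounded. On $OG(k,2n+1)$ the fiber of $\cS$ at $[U]$ is the spin representation of the odd-dimensional space $U^\perp/U$, so $\mathrm{rk}\,\cS=2^{n-k}$. The formula $2^{n-k-1}$ you used is the one for $OG(k,2n)$. Here $\dim U^\perp/U=9$, which gives:
\begin{itemize}
\item $\mathrm{rk}\,\cS=16$ and $\dim X=19-16=3$;
\item $c_1(\cS)=8$ and $K_X=\cO(-10+8)=\cO(-2)$.
\end{itemize}
This is exactly right for $Fl_3$, embedded by $\cO(1,1)$ through $(\ell\subset\phi^\perp)\mapsto\langle\ell,\phi\rangle$. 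The zero locus has the expected dimension, so the ``reinterpretation'' and the set-theoretic fallback should simply be dropped.

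\textbf{Comparison with the paper.} Once this is corrected, the rest of your plan is a valid route, different from the paper's. The paper never works inside $Spin_{13}$. It views $OG(2,13)\subset OG(2,14)$ as the zero locus of a section of $\cU^\vee$, so $X$ is cut out of $(OG(2,14),\cS_+)=G_2/P_2\sqcup G_2/P_2$ by a section of $\cU^\vee$. It then quotes \cite{kr} for the fact that such a zero locus in $G_2/P_2$ is $Fl_3$. That argument is short and explains the doubling. However, it inherits the ``nothing else'' part from the $Spin_{14}$ statement, where only containment is checked. Your argument is self-contained and actually proves completeness, at the price of setting up the $SL_3\times SL_3$ model by hand.

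\textbf{Points to tighten.}
\begin{enumerate}
\item The model $V_{13}=V_6\oplus V_6'\oplus L$ is most easily justified from $Spin_{14}$. Take $V_{13}=v^\perp$ with $v=v_7+v_7'$; then $\mathrm{Stab}_{G_2}(v_7)=SL_3$ acts on $v_7^\perp\cap V_7=E_3\oplus E_3^\vee$.
\item You do not need $\delta$ to be a pure tensor, and for the obvious identification $\Delta\simeq\Delta_6\otimes\Delta_6'$ a general invariant spinor is not one. Invariance alone forces $\delta=a\,1\otimes 1+b\,1\otimes e'_{123}+c\,e_{123}\otimes 1+d\,e_{123}\otimes e'_{123}$. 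Since $\ell\cdot\phi$ kills both $1$ and $e_{123}$ when $\phi(\ell)=0$, both copies of $Fl_3$ lie in $X$ for every such $\delta$.
\item The $13$ torus weights are distinct, so the $T$-fixed points of $OG(2,13)$ are the $60$ isotropic coordinate planes. Assume $abcd\neq 0$, which holds for general $\delta$.
\begin{itemize}
\item For a mixed plane $\langle w,w'\rangle$, with $w\in V_6$ and $w'\in V_6'$ weight vectors, exactly one term of $\delta$ survives, with coefficient $a$, $b$, $c$ or $d$. So no mixed plane kills $\delta$.
\item The planes $\langle e_i,e_j\rangle$ and $\langle f_i,f_j\rangle$ (and their primed versions) fail in the same way.
\end{itemize}
So only the twelve planes $\langle e_i,f_j\rangle$, $\langle e'_i,f'_j\rangle$ with $i\neq j$ lie in $X$.
\item ``Every component contains a fixed point'' does not by itself exclude an extra component passing through a fixed point of $Fl_3$. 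Close this with smoothness. For general $\delta$ the zero locus is smooth of dimension $3$, so both copies of $Fl_3$ are connected components. Any further component would then be a $T$-stable projective variety with no fixed point, contradicting Borel's theorem. Your tangent space computation would serve the same purpose, but it must be done at all twelve fixed points, or at one per copy using the Weyl group of $SL_3$.
\end{enumerate}
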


\proof Note that $OG(2,13)$ embeds in $OG(2,14)$ as the zero locus of general section of the dual rank two tautological bundle $\cU^\vee$. Therefore, because of  Proposition \ref{doubleG2P2}, we just need to prove that the zero locus of a generic section of the restriction of $\cU^\vee$ to $G_2/P_2$ is a copy of $Fl_3$. But this follows from \cite[Lemma 4]{kr}.   \qed 

\begin{prop}
$(OG(3,13), \cS)$ is a non-trivial family of Fano manifolds of dimension $16$, index $5$ and Picard number one, which are equivariant compactifications of  $SL_3\times SL_3$, up to some finite group. 
\end{prop}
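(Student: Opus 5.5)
We first record the numerical invariants, then prove quasi-homogeneity, then non-triviality of the family.

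\emph{Invariants.} $OG(3,13)$ has dimension $3\cdot 13-\frac{3\cdot 4}{2}-9=24$, wait: $\dim OG(k,m)=k(m-k)-\binom{k+1}{2}=30-6=24$. Its index is $m-k-1=9$. The spinor bundle $\cS$ has rank $2^{6-3}=8$, and $c_1(\cS)=4\cL$. Hence $X$ has dimension $24-8=16$. By adjunction its index is $9-4=5$. The Picard number is one by Lefschetz-type arguments via the Koszul complex and Bott's theorem, as in the other cases. Since $\cS$ is globally generated of rank $8<24$, the zero locus of a general section is smooth of the expected dimension. Connectedness, $h^0(\cO_X)=1$, follows from the Koszul complex.

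\emph{Quasi-homogeneity.} By \cite[Proposition 41]{sk} the stabilizer of a generic spinor $\delta$ is $SL_3\times SL_3$, of dimension $16=\dim X$. So it suffices to exhibit one point $[P]\in X$ whose stabilizer in $\fsl_3\times\fsl_3$ is zero. Then the orbit is open and isomorphic to $(SL_3\times SL_3)/\Gamma$ with $\Gamma$ finite.

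To find $P$, I would imitate Proposition \ref{3,14}. Embed $V_{13}$ as the hyperplane of $V_{14}$ orthogonal to a non-isotropic vector, here $e_7-f_7$. Then $\Delta$ for $Spin_{13}$ identifies with $\Delta_+$ of $Spin_{14}$. A generic $\delta\in\Delta$ is a generic spinor of $\Delta_+$ whose $G_2\times G_2$-stabilizer meets $Spin_{13}$ in $SL_3\times SL_3$, each $SL_3$ being the stabilizer in $G_2$ of a non-isotropic vector of $V_7$.

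Take $\delta=1+e_{1237}+e_{4567}+e_{123456}$ and the $3$-plane
$$P=\langle e_1+e_4+f_1-f_4,\ e_2+e_5+f_2-f_5,\ e_3+e_6+f_3-f_6\rangle .$$
This $P$ is orthogonal to $e_7,f_7$, so it lies in $V_{13}$, and it is killed by $\omega_P$ as checked in Proposition \ref{3,14}. Its stabilizer in $\fg_2\times\fg_2$ is the $9$-dimensional algebra computed there. We then intersect that algebra with $\fsl_3\times\fsl_3$ by imposing the extra linear conditions that kill the two fixed vectors. If the intersection is $0$, we are done. If it is not, we perturb $P$ along directions in $X$, for instance by adding a component in $\langle e_7+f_7\rangle$ to one generator, suitably corrected, and recompute. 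This explicit linear algebra is the main obstacle: the point must be generic enough to have trivial stabilizer, and the isotropy and killing conditions must be checked by hand or with Macaulay2.

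\emph{Non-triviality of the family.} The generic orbit of $Spin_{13}$ on $\PP(\Delta)$ has codimension one, so $\delta$ varies in a one-parameter family of orbits. I would prove non-triviality through the deformation sequence (\ref{long}). Conditions (\ref{c1}), (\ref{c11}) and (\ref{c2}) are verified by Bott's theorem on the twisted Koszul complexes, as in Proposition \ref{rigidity}, possibly handling exceptional non-vanishings as in Proposition \ref{rigidityOG311}. Then $H^1(X,TX)=\mathrm{coker}(\fso_{13}\to T_{[\delta]}\PP\Delta)$, which is one-dimensional. It remains to check that non-isomorphic orbits give non-isomorphic $X_\delta$. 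As usual, $H^0(X,TX)=\fsl_3\times\fsl_3$ identifies $\mathrm{Aut}^0(X)$ with the stabilizer. An isomorphism $X_\delta\simeq X_{\delta'}$ preserves the ample generator. If it extends to $OG(3,13)$ via the restriction isomorphisms of (\ref{c1}), it comes from $Spin_{13}$ and so maps $[\delta]$ to $[\delta']$. Hence the family is non-trivial, with the one-dimensional $H^1(X,TX)$ giving the effective parameter.
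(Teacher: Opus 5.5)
Your overall strategy is the paper's: restrict the $9$-dimensional stabilizer $\fs$ of $P$ from Proposition \ref{3,14} to the stabilizer of a hyperplane $H\supset P$. The gap is your specific hyperplane $H=(e_7-f_7)^\perp$, which is degenerate with respect to $\delta$, so the step fails there.

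\textbf{Why $e_7-f_7$ is degenerate.} The Cartan subalgebra of $\fg_2\times\fg_2$ is diagonal in the basis $(e_i,f_i)$, cut out by $t_7=0$ and $t_1+t_2+t_3=t_4+t_5+t_6=0$, so its zero weight space is $\langle e_7,f_7\rangle$. Checking which combinations of weight vectors kill $\delta$ gives root vectors $e_1\wedge(e_7-f_7)+f_2\wedge f_3$ in one factor and $e_4\wedge(e_7+f_7)+f_5\wedge f_6$ in the other. Hence
$V_7=\langle e_1,e_2,e_3,f_1,f_2,f_3,e_7-f_7\rangle$ and $V_7'=\langle e_4,e_5,e_6,f_4,f_5,f_6,e_7+f_7\rangle$.

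So $e_7-f_7$ is the zero-weight vector of $V_7$ and has no component in $V_7'$. It is fixed by the whole second $G_2$, and the stabilizer of $\delta$ in $\fso(H)$ is $\fsl_3\times\fg_2$, of dimension $22$, not $\fsl_3\times\fsl_3$. Consequently $[\delta]$ lies in a $Spin(H)$-orbit of codimension $7$ in $\PP(\Delta)$. The zero locus you study is therefore not a general member of the family (it need not even be smooth), and on a $16$-dimensional component every isotropy algebra has dimension at least $6$.

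\textbf{What goes wrong at your $P$.} The isotropy of $P$ contains the factor $\fs_2\simeq\fsl_2$ of $\fs_0=\fs_1\times\fs_2$, which lies in the second $\fg_2$. It also contains the three elements $E_{ij}-E_{ji}+E_{i+3,j+3}-E_{j+3,i+3}$, $1\le i<j\le 3$, of $\fsl(E)$. These kill $\delta$, rotate the generators of $P$ into one another, and kill both $e_7$ and $f_7$. So even if you read ``$\fsl_3\times\fsl_3$'' as the stabilizer of $e_7$ and $f_7$, or replace $e_7-f_7$ by any non-isotropic vector of $\langle e_7,f_7\rangle$, the intersection with $\fs$ is still at least $3$-dimensional.

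\textbf{The fix.} Perturbing $P$, your fallback, cannot help, because the defect lies in the pair $(\delta,H)$ and not in $P$. You must keep $P$ and move the hyperplane. The paper takes $H=\langle e_1+e_4+e_7-f_7\rangle^\perp$. Its normal vector has nonzero components $e_1+e_7-f_7\in V_7$ and $e_4\in V_7'$ and does not lie in $\langle e_7,f_7\rangle$. For this $H$ one finds $\fs\cap\fso(H)=0$. Upper semicontinuity of $v\mapsto\dim(\fs\cap\mathrm{stab}(v))$ on $P^\perp$ then gives trivial isotropy at $P$ for a generic hyperplane through $P$, hence for the general member.

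\textbf{Non-triviality.} Here your route differs from the paper's. The paper computes $\chi(X,TX)=15$ and combines it with $h^0(X,TX)\ge 16$, which follows from the finite generic stabilizer just established. It also implicitly uses the vanishing of $H^q(X,TX)$ for $q\ge 2$ on a Fano manifold.

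Your route through (\ref{c1}), (\ref{c11}), (\ref{c2}) and (\ref{long}) would give more: $h^1(X,TX)=1$, with a nonzero Kodaira--Spencer map in the direction transversal to the orbit. However, it needs Bott--Koszul verifications for a rank $8$ bundle that you have not carried out, and these may contain exceptional non-vanishing terms, as for $OG(3,11)$ and $OG(3,12)$.

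Your final step assumes that an isomorphism $X_\delta\simeq X_{\delta'}$ extends to $OG(3,13)$. That is unjustified, but also unnecessary: a nonzero Kodaira--Spencer map already rules out isotriviality.
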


\noindent  {\it Betti numbers:} $(1,1,2,3,5,6,8,10,12,10,8,6,5,3,2,1,1)$.

\proof 
The Lie algebra stabilizer $\fs$ in $\fg_2\times \fg_2$ of a generic $P$ in $(OG(3,14), \cS_+)$ was computed in the proof of Proposition \ref{3,14}. For a generic hyperplane $H$ containing $P$, defining a copy of $Spin_{13}$ inside $Spin_{14}$, this $P$ will be a generic point of the manifold $X$ of type $(OG(3,13), \cS)$ defined by the same spinor, and its Lie algebra stabilizer in $\fso_{13}$ will just be the intersection in $\fso_{14}$ of $\fs$ and the stabilizer of $H$. For our explicit $P$, we choose $H=\langle e_1+e_4+e_7-f_7\rangle^\perp$, and check by an  explicit computation that this intersection is empty. This proves that the generic stabilizer of the action of $SL_3\times SL_3$ on $X$ is finite. 

On the other hand, another computer calculation shows that the Euler characteristic $\chi(X,TX)=15$. Since $H^0(X,TX)$ certainly contains $\fsl_3\times\fsl_3$, whose dimension is $16$, we deduce that $h^1(X,TX)\ge 1$, hence that our family is non-trivial.\qed 
 
\medskip\noindent {\it Remark.} By the same argument as for $(OG(3,14),\cS_+)$, which applies verbatim since a maximal torus of $SL_3$ is also a maximal torus of $G_2$, $(OG(3,13),\cS)$ admits a torus action with finitely many fixed points. Hence its cohomology is pure and it is a compactification of affine space.  

\medskip
In a similar way, we get the following statement:

\begin{prop}
$(OG(4,13), \cS)$ is a non-trivial family of Fano manifolds of dimension $22$, index $6$ and Picard number one.
\end{prop}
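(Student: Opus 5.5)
The plan mirrors the proof for $(OG(3,13),\cS)$. The invariants come first. $OG(4,13)$ has dimension $4\cdot 13-\frac{4\cdot 5}{2}\cdot\ldots$; more precisely $\dim OG(k,m)=k(m-k)-\binom{k+1}{2}=36-10=26$. The spinor bundle $\cS$ on $OG(4,13)$ has rank $2^{6-4}=4$, so $X$ has dimension $22$.

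For the index, $c_1(\cS)=\frac12\cdot 4\,\cL=2\cL$. The canonical index of $OG(k,m)$ is $m-k-1=8$, so by adjunction $\iota_X=8-2=6$. Picard number one follows from the Lefschetz-type theorem for zero loci of sections of ample-enough globally generated bundles, since $\dim X=22$ is large. Alternatively, the Koszul/Bott computation of $h^{1,1}(X)$ via the conormal sequence, as done uniformly in the paper, gives $h^{1,1}=1$. Connectedness ($h^0(\cO_X)=1$) comes from the Koszul complex and Bott's theorem.

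Next I would bound the automorphism group from below. A general spinor $\delta\in\Delta$ has stabilizer locally $SL_3\times SL_3$, of dimension $16$, so $H^0(X,TX)\supseteq\fsl_3\times\fsl_3$. To show the family is non-trivial, I compute $\chi(X,TX)$ by Hirzebruch--Riemann--Roch, or equivalently via the Koszul complex twisted by $T(OG)$ together with the normal sequence and Bott's theorem (the Macaulay2 routine used elsewhere). If $\chi(X,TX)<16$, then $h^1(X,TX)\ge h^0(X,TX)-\chi\ge 1$, provided $h^{q}(X,TX)=0$ for $q\ge2$. That vanishing I would extract from the same Koszul/Bott computation.

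A more conceptual route to nontriviality is to vary $[\delta]$ in the one-parameter family of generic $Spin_{13}$-orbits in $\PP(\Delta)$. Sequence (\ref{long}) shows that the image of $\fso_{13}$ in $\Delta/\CC s$ has codimension one, so $H^1(X,TX)\ne0$ as soon as conditions (\ref{c1}), (\ref{c11}) and (\ref{c2}) hold. This gives a nonzero Kodaira--Spencer class transverse to the orbits. I would check (\ref{c1})--(\ref{c2}) via Bott exactly as in Proposition \ref{rigidity}.

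The main obstacle is the cohomological bookkeeping. One must either establish the vanishing needed for (\ref{c1})--(\ref{c2}), where unexpected non-vanishing terms as in Propositions \ref{rigidityOG311} and \ref{rigidityOG312} may occur and would need the same $SL_3\times SL_3$-invariance trick to dispose of, or obtain the Euler characteristic numerically. I would first try the direct $\chi(X,TX)$ computation by computer. I would also argue that distinct generic orbits give non-isomorphic $X_\delta$, by recovering $[\delta]$ up to $Spin_{13}$ from $X_\delta$ through the restriction isomorphism (\ref{c2}).
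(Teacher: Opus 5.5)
Your main route is the paper's: the invariants come from adjunction exactly as you compute them, and non-triviality comes from a computer evaluation of $\chi(X,TX)$ (the paper finds $15$) compared with $\dim(\fsl_3\times\fsl_3)=16$. Two small corrections: the vanishing $h^q(X,TX)=0$ for $q\ge 2$ needs no Bott bookkeeping, because $TX\simeq\Omega_X^{21}\otimes\omega_X^{-1}$ with $\omega_X^{-1}$ ample, so Akizuki--Nakano applies; and $\cS$ is not ample (it restricts to lines as $\cO^2\oplus\cO(1)^2$), so Picard number one should come from your Koszul/Bott fallback rather than from an ampleness-based Lefschetz theorem.
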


\noindent  {\it Betti numbers:} $(1,1,2,3,5,6,8,-,\ldots,-,8,6,5,3,2,1,1)$.

\medskip
Here again $\chi(X,TX)=15$, so we expect the family to be one dimensional, the automorphism groups being again $SL_3\times SL_3$, up to some finite group. 
But we cannot hope for an open orbit of the automorphism group, since the dimension of $X$ is bigger than $16$. 

\medskip\noindent {\it Remark.} More generally, one could call spinorial Fano manifolds the smooth zero loci of sections of spinor bundles on orthogonal Grassmannians $OG(k,2n)$ or $OG(k,2n+1)$, for any $k$ and $n$. Such zero loci must be non-empty for $n$ large and $k$ close enough to $n$, since the rank of spinor bundles grows like $2^{n-k}$. For $n>14$, one will get large families of prime Fano manifolds with large index, but the generic ones will presumably have trivial automorphism group, contrary to those that were considered in the present paper.

\end{document}